\newtheorem{theorem}{Theorem}[section]
\newtheorem*{theorem*}{Theorem}
\newtheorem{definition}{Definition}[section]
\newtheorem{corollary}{Corollary}[section]
\newtheorem{lemma}{Lemma}[section]
\newtheorem{proposition}{Proposition}[section]
\theoremstyle{definition}
\newtheorem{remark}{Remark}[section]
\newcommand{\R}{\mathbb R}
\newcommand{\calC}{\mathcal C}
\newcommand{\calL}{\mathcal L}
\newcommand{\dvol}{ d\text{Vol}_{g}}
\begin{document}

\title[Trichotomy Theorem: Prescribed Scalar and Mean Curvatures]{Trichotomy Theorem for Prescribed Scalar and Mean Curvatures on Compact Manifolds with Boundaries}
\author[J. Xu]{Jie Xu}
\address{
Department of Mathematics and Statistics, Boston University, Boston, MA, U.S.A.}
\email{xujie@bu.edu}
\address{
Institute for Theoretical Sciences, Westlake University, Hangzhou, Zhejiang Province, China}
\email{xujie67@westlake.edu.cn}

\date{}							

\maketitle

\begin{abstract} In this article, we give results of prescribing scalar and mean curvature functions for metrics either pointwise conformal or conformally equivalent to a Riemannian metric that is equipped on a compact manifold with boundary, with dimensions at least $ 3 $. The results are classified by the sign of the first eigenvalue of the conformal Laplacian. This leads to a ``Trichotomy Theorem" in terms of both scalar and mean curvature functions, which is a full extension of the ``Trichotomy Theorem" given by Kazdan and Warner. We also discuss prescribing Gauss and geodesic curvature problems on compact Riemann surfaces with boundary for metrics either pointwise conformal or conformally equivalent to the original metric, provided that the Euler characteristic is negative. The key step is a general version of monotone iteration scheme which handle the zeroth order nonlinear term on the boundary conditions.
\end{abstract}

\section{Introduction}
In this article, we give a ``Trichotomy Theorem" on compact manifolds $ (\bar{M}, g) $ with non-empty smooth boundaries $ \partial M $, $ n : = \dim M \geqslant 3 $, involving both the scalar and mean curvatures. This is a full generalization of the ``Trichotomy Theorem" on closed manifolds, given by Kazdan and Warner \cite{KW3}. Precisely speaking, this ``Trichotomy Theorem" concerns whether the given functions $ S, H $ can be realized as scalar and mean curvatures, respectively, of a metric $ \tilde{g} $ either within a conformal class $ [g] $ or conformally equivalent to the metric $ g $. Throughout this article, we assume that $ \bar{M} $ is connected since otherwise we can easily apply arguments below equally to each connected component. It is well-known that this problem is reduced to the existence of the positive solutions of the nonlinear second order elliptic PDE
\begin{equation}\label{intro:eqn1}
-a\Delta_{g} u + R_{g} u = \left( S \circ \phi \right) u^{p-1} \; {\rm in} \; M, \frac{\partial u}{\partial \nu} + \frac{2}{p-2} h_{g} u = \frac{2}{p-2} \left( H \circ \phi \right) u^{\frac{p}{2}} \; {\rm on} \; \partial M.
\end{equation}
Here $ R_{g} $ is the scalar curvature of the metric $ g $, $ h_{g} $ is the mean curvature. $ \phi : \bar{M} \rightarrow \bar{M} $ is some diffeomorphism on $ \bar{M} $. When $ \phi = Id $, the PDE (\ref{intro:eqn1}) is for prescribing functions $ S, H $ within a conformal class $ [g] $. The constants $ a, p $ are defined as
\begin{equation*}
a = \frac{4(n - 1)}{n - 2}, p = \frac{2n}{n - 2}.
\end{equation*}
$ \Delta_{g} $ is the Laplace-Beltrami operator and $ \nu $ is the unique outward unit normal vector field along $ \partial M $. The functions $ S : \calC^{\infty}(\bar{M}) \rightarrow \R $, and $ H : \calC^{\infty}(\partial M) \rightarrow \R $ are given. We denote $ \eta_{1} $ to be the first eigenvalue of the conformal Laplacian $ \Box_{g} : = -a\Delta_{g} + R_{g} $ with associated eigenfunction $ \varphi $, i.e. $ \varphi $ is a positive, smooth function that solves the following PDE:
\begin{equation*}
-a\Delta_{g} \varphi + R_{g} \varphi = \eta_{1} \varphi \; {\rm in} \; M, \frac{\partial \varphi}{\partial \nu} + \frac{2}{p-2} h_{g} \varphi = 0 \; {\rm on} \; \partial M.
\end{equation*}
\medskip

When the dimension of the manifold $ n = 2 $, we also discuss the pair of functions $ K, \sigma $ that can be realized as Gaussian and geodesic curvatures, respectively, either for a pointwise conformal metric or a conformally equivalent metric. The two dimensional case is reduced to the existence of the solutions of the following elliptic PDE
\begin{equation}\label{intro:eqn2}
-a\Delta_{g} u + K_{g} = \left( K \circ \phi \right) e^{2u} \; {\rm in} \; M, \frac{\partial u}{\partial \nu} + \sigma_{g} = \left( \sigma \circ \phi \right) e^{u} \; {\rm on} \; \partial M.
\end{equation}
Here $ K_{g} $ and $ \sigma_{g} $ are Gaussian and geodesic curvatures of $ g $, respectively. The functions $ K :  \calC^{\infty}(\bar{M}) \rightarrow \R $ and $ \sigma : \calC^{\infty}(\partial M) \rightarrow \R $ are given. Again when the diffeomorphism $ \phi : \bar{M} \rightarrow \bar{M} $ is the identity map, $ K , \sigma $ are prescribing Gauss and geodesic curvatures for some metric within the conformal class $ [g] $.

The main results of this article are given as follows:
\begin{theorem}\label{intro:thm1}
Let $ (\bar{M}, g) $ be a connected, compact manifold with non-empty smooth boundary $ \partial M $, $ n = \dim \bar{M} \geqslant 3 $. Let $ S, H \in \calC^{\infty}(\bar{M}) $ be given functions.
\begin{enumerate}[(i).]
\item If $ \eta_{1} < 0 $, then any function $ S < 0 $ somewhere in $ M $ can be realized as a scalar curvature function of some metric conformally equivalent to $ g $, with mean curvature $ cH $ for some small enough constant $ c > 0 $ and any function $ H $;
\item If $ \eta_{1} < 0 $, then any function $ S < 0 $ that changes sign in $ M $ can be realized as a scalar curvature function of some metric conformally equivalent to $ g $, with mean curvature $ cH $ for some small enough constant $ c > 0 $ and any function $ H $;
\item If $ \eta_{1} < 0 $, then any function $ S > 0 $ somewhere in $ M $ can be realized as a scalar curvature function of some metric pointwise conformal to $ g $, with mean curvature $ cH $ for some small enough constant $ c > 0 $ and any function $ H $.
\end{enumerate}
\end{theorem}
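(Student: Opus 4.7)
The plan is to reduce all three statements to the existence of a smooth positive solution of the coupled boundary value problem \eqref{intro:eqn1} and then to produce such a solution via a sub/super-solution pair fed into the general monotone iteration scheme announced in the introduction, whose novelty is that it absorbs the nonlinear zeroth-order boundary term proportional to $u^{p/2}$.

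My first move is to normalize the background. Since $\eta_1 < 0$, the positive first eigenfunction $\varphi$ of $\Box_g$ with its Robin boundary condition defines a pointwise-conformal metric $g_0 = \varphi^{p-2} g$ whose scalar curvature $R_{g_0} = \eta_1 \varphi^{2-p}$ is strictly negative on $\bar{M}$ and whose mean curvature $h_{g_0}$ vanishes on $\partial M$. Because \eqref{intro:eqn1} is conformally covariant, solving it against $g_0$ is equivalent to solving it against $g$, and I work with the normalized data from here on.

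For cases (i) and (ii), where I may compose by a diffeomorphism $\phi$, I would follow a Kazdan--Warner type reparametrization: choose $\phi$ so that $S\circ\phi$ has a controlled prescribed profile (strictly negative on a large region for (i); sign-changing with a suitable integral condition for (ii)) while preserving the pointwise sign constraints required for the iteration. A large constant $u_+ \equiv C$ then serves as a supersolution, because $R_{g_0} < 0$ forces the interior supersolution inequality $R_{g_0} C \geq (S\circ\phi)C^{p-1}$ to hold once $C$ is large, and the boundary supersolution inequality $0 \geq \tfrac{2}{p-2}(cH\circ\phi) C^{p/2}$ is achieved by taking the free constant $c > 0$ small relative to $\|H\|_\infty$ and $C$. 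A subsolution is obtained in the form $u_- = \epsilon w$, where $w > 0$ solves an auxiliary linear Robin problem whose normal derivative dominates $H\circ\phi$ pointwise on $\partial M$; for $\epsilon$ small the linear terms dominate the $u^{p-1}$ and $u^{p/2}$ nonlinear terms, and $u_- \leq u_+$ is automatic after possibly shrinking $\epsilon$.

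Case (iii) is the most delicate and is where I expect the principal obstacle. Prescribing a somewhere-positive $S$ pointwise-conformally when $\eta_1 < 0$ is obstructed on closed manifolds, so the proof must genuinely exploit the boundary degree of freedom supplied by $cH$. Here the supersolution cannot be a large constant, since on the set $\{S > 0\}$ the nonlinear term $(S\circ\phi)C^{p-1}$ would eventually dominate $R_{g_0} C$; instead I would seek a moderate-size supersolution of the form $u_+ = A \psi$ for a positive auxiliary $\psi$, chosen so that the strictly negative linear term $R_{g_0} u_+$ dominates $(S\circ\phi)u_+^{p-1}$ uniformly, while simultaneously $c$ is shrunk so that $\partial_\nu \psi \geq \tfrac{2c}{p-2}(H\circ\phi)\psi^{p/2 - 1} \cdot A^{p/2-1}$ on $\partial M$. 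The subsolution is again $\epsilon w$, and compatibility $u_- \leq u_+$ is enforced by fixing $A$ first and then $\epsilon$. Making the interior and boundary inequalities simultaneously compatible under the single free parameter $c$ is the central technical difficulty, and it is precisely the generalized monotone iteration scheme from earlier in the paper that closes the argument: it produces a smooth $u > 0$ solving \eqref{intro:eqn1} with $\phi = \mathrm{Id}$, and the conformal transformation identities then identify $\tilde{g} = u^{p-2} g_0$ as a metric in $[g]$ realizing $S$ and $cH$ as scalar and mean curvatures, respectively.
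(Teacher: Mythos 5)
Your proposal mis-reads the theorem as stated, and this changes the analysis in a structural way. The sentence after the theorem in the paper reads ``Case (i) is given in \S3 and \S4\ldots{} Case (ii) is given in \S6\ldots{} and Case (iii) is given in \S7,'' and those sections are respectively for $\eta_1 < 0$, $\eta_1 = 0$, and $\eta_1 > 0$. So the hypotheses of (ii) and (iii) carry typos: (ii) is the $\eta_1 = 0$ case (with $S$ changing sign), and (iii) is the $\eta_1 > 0$ case. The ``principal obstacle'' you sensed in (iii) is real: a somewhere-positive $S$ cannot in general be realized pointwise-conformally when $\eta_1 < 0$, and no amount of tuning the boundary term fixes this. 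The hypothesis was supposed to be $\eta_1 > 0$.

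Even under the correct reading, two steps in your proposal do not survive contact with the equation. First, the normalization $g_0 = \varphi^{p-2} g$ via the first eigenfunction gives $h_{g_0} \equiv 0$, while the iteration scheme Theorem \ref{HL:thm2} explicitly requires $h_g = h > 0$ a positive constant. The paper instead invokes the resolution of the Han--Li conjecture to normalize to $R_g = \lambda < 0$ and $h_g = \zeta > 0$, and then perturbs the boundary Robin coefficient (Proposition \ref{HL:prop3}) to get strict inequalities. Second, for case (i) a large constant $u_+ \equiv C$ is \emph{not} a supersolution when $S > 0$ anywhere: the interior inequality $R_{g_0} C \geqslant (S\circ\phi)\,C^{p-1}$ fails on the set where $S\circ\phi > 0$ as soon as $C$ is large, and composing with a diffeomorphism cannot remove that set since $S > 0$ somewhere is a pointwise property. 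The paper's Theorem \ref{negative:thm3} gets around this via the Kazdan--Warner transform $w = u^{2-p}$ (Lemma \ref{negative:lemma1}), which converts the supersolution inequality for $u$ into a \emph{linear-plus-gradient} inequality for $w$; one then solves a linear Robin problem for $w$ near a constant, uses the $W^{2,q}$ a priori estimate to control $\|\nabla w\|_\infty$ and pin $w$ close to a positive constant, and verifies the needed inequality under a smallness condition on $\|F - A\|_{\mathcal{L}^q}$ for a minorant $F \leqslant (2-p)S\circ\phi$. The role of the diffeomorphism in Theorem \ref{ne:thm1} is precisely to shrink the measure of the set where $(2-p)(S\circ\phi)$ differs from a constant so that this $\mathcal{L}^q$-smallness can be met; it is not to make $S\circ\phi$ negative everywhere, which is impossible.

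For (ii) (correctly read as $\eta_1 = 0$), the paper's argument (Theorem \ref{zero:thm2}) is a short reduction: one needs $\int_M (S\circ\phi)\,\mathrm{dVol}_g < 0$, which can be arranged by a diffeomorphism whenever $S$ changes sign, and then the $\eta_1 = 0$ result of \cite{XU8} applies. Your sketch omits this integral condition, which is the whole content of that case. For (iii) (correctly read as $\eta_1 > 0$), the paper's construction is qualitatively different from your moderate-amplitude ansatz $u_+ = A\psi$: the subsolution is not of the form $\epsilon w$ but is a \emph{compactly supported} local solution of a Yamabe-type Dirichlet problem on a small domain $\Omega \subset M$ (Proposition \ref{HL:prop1} or \ref{HL:prop2}, depending on whether the metric is locally conformally flat), extended by zero; the supersolution is $\delta\varphi$ for $\delta$ small (which works precisely because $\eta_1 > 0$ makes $-a\Delta_g\varphi + R_g\varphi = \eta_{1,\beta}\varphi > 0$ strictly positive and thus dominates $A^{p-2} S\varphi^{p-1}$), with the two glued via a local barrier from \cite{XU6}. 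Your proposed supersolution would require $-a\Delta_g\psi + R_{g_0}\psi > 0$ after your normalization, which is incompatible with $\eta_1 < 0$ and is the point where your reading of the theorem forces the argument into a contradiction.
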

Case (i) is given in \S3 and \S4; when $ S < 0 $ everywhere on $ \bar{M} $, we can improve the result within a pointwise conformal class $ [g] $ in Theorem \ref{negative:thm1} and Theorem \ref{negative:thm2}; Case (ii) is given in \S6; when $ S $ satisfies $ \int_{M} S \dvol < 0 $ in addition, we can improve the result within a pointwise conformal class $ [g] $, see \cite[Thm~.1.2]{XU8}; and Case (iii) is given in \S7. The significance of this is that we can choose arbitrary function with small enough sup-norm as our mean curvature function, provided that the scalar curvature function is nontrivial. 
\medskip

Based on our best understanding, known results in this topic are mainly for the non-positive first eigenvalue cases or non-positive Euler characteristic cases. In \cite{CMR}, Cruz-Bl\'azquez, Malchiodi and Ruiz discussed prescribing negative scalar functions and mean curvature functions with arbitrary signs by variational method, for compact manifolds with dimensions at least $ 2 $. Some of our results overlap their results, but with a different method and different hypotheses on prescribed functions. However, our results are classified by the sign of the first eigenvalue of the conformal Laplacian. For zero first eigenvalue case or zero Euler characteristic case, we follow the results of \cite{XU8}. We point out that Brezis and Merle discussed the PDE $ -\Delta_{e} u = V e^{u} $ on $ \Omega \subset \R^{2} $ with Dirichlet boundaray condition in \cite{BreM}. Other results for the local Yamabe equation with Dirichlet condition in higher dimensions could be found in \cite{XU2}. For more discussions with respect to (\ref{intro:eqn2}) in $ 2 $-dimensional case, we refer to \cite[Ch.~13, Ch.~14]{T3}. When the first eigenvalue of conformal Laplacian is positive, a lot of non-existence results are given, e.g. \cite{BM} and \cite{MaMa}, etc.
\medskip

We also give results on compact Riemann surfaces with boundary, provided that $ \chi(\bar{M}) < 0 $.
\begin{theorem}\label{intro:thm2}
Let $ (\bar{M}, g) $ be a compact Riemann surface with non-empty smooth boundary $ \partial M $. Let $ K, \sigma \in \calC^{\infty}(\bar{M}) $ be given functions. 
\begin{enumerate}[(i).]
\item If $ K < 0 $ everywhere on $ \bar{M} $, then there exists a metric pointwise conformal to $ g $ with Gauss curvature $ K $ and geodesic curvature $ c \sigma $ for some small enough constant $ c > 0 $ and arbitrary function $ \sigma $;
\item If $ K < 0 $ somewhere on $ \bar{M} $, then there exists a metric conformally equivalent to $ g $ with Gauss curvature $ K $ and geodesic curvature $ c \sigma $ for some small enough constant $ c > 0 $ and arbitrary function $ \sigma $.
\end{enumerate}
\end{theorem}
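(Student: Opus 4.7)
The plan is to reduce both parts to the general monotone iteration scheme developed earlier in the paper, which is built to handle zeroth-order nonlinear boundary terms. For part (i), I would first normalize the background: since $\chi(\bar M) < 0$, classical uniformization for surfaces with boundary produces a pointwise conformal representative $g_{0} \in [g]$ with $K_{g_{0}}$ a strictly negative constant and $\sigma_{g_{0}} \equiv 0$. In this background the PDE \eqref{intro:eqn2} becomes
\begin{equation*}
-a\Delta_{g_{0}} u + K_{g_{0}} = K e^{2u} \text{ in } M, \quad \partial_\nu u = c\sigma e^{u} \text{ on } \partial M.
\end{equation*}
I would then construct sub- and super-solutions of the form $u_\pm = A_\pm + c\psi_\pm$, where $A_\pm$ are real constants with $A_- \leq A_+$ and $\psi_\pm \in \calC^{\infty}(\bar M)$ solve auxiliary linear Robin/Neumann problems whose inhomogeneous boundary data match $\sigma e^{A_\pm}$ up to a strict positive margin. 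Because $K_{g_{0}}$ is a strictly negative constant and $K < 0$ is bounded, the interior sub/super-solution inequalities hold as soon as $A_\pm$ are chosen so that $K e^{2A_\pm}$ lies on the correct side of $K_{g_{0}}$; the linear correction $c\psi_\pm$ absorbs the exponential boundary data for $c$ sufficiently small, with the built-in margin dominating the $O(c^{2})$ nonlinear remainder in the Taylor expansion of $c\sigma e^{u_\pm}$. Invoking the paper's general monotone iteration scheme then yields a smooth positive solution, settling part (i).

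For part (ii), I would follow the Kazdan--Warner diffeomorphism trick: when $K$ is negative only somewhere, choose a diffeomorphism $\phi: \bar M \to \bar M$ that transports enough mass into the open region $\{K < 0\}$ so that $K \circ \phi$ satisfies the structural sign/integral condition making the argument of part (i) applicable to equation \eqref{intro:eqn2} with $K$ replaced by $K \circ \phi$. The metric produced is conformally equivalent to $g$ but not, in general, pointwise conformal to $g$.

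The main technical obstacle I anticipate is not the sub/super-solution construction itself but verifying that the general monotone iteration actually converges in the presence of the exponential boundary term $c\sigma e^{u}$: at each step one must show that the iterate stays between $u_-$ and $u_+$, and that the linearized Robin problem at that step remains solvable with full $\calC^{\infty}$ regularity up to $\partial M$. This is precisely the contribution the abstract highlights as the key step, and I expect the argument here to mirror the higher-dimensional scheme used in Theorem \ref{intro:thm1}, with the conformal exponent $u^{p/2}$ on the boundary replaced by $e^{u}$ and the interior nonlinearity $u^{p-1}$ replaced by $e^{2u}$.
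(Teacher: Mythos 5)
Your approach for part (i) is plausible and, in fact, somewhat more elementary than the paper's: after uniformizing to constant negative $K_{g_0}$ and $\sigma_{g_0}=0$, a nearly-constant super-solution $u_+=A_++c\psi_+$ works because $K$ is \emph{uniformly} negative on compact $\bar M$, so for $A_+$ large enough $-1-K e^{2A_+}$ has a strictly positive margin that absorbs the $O(c)$ perturbations, and the linear Neumann correction $\psi_+$ (built with a margin, mindful of the Neumann compatibility condition) handles the boundary term for $c$ small. The paper instead proves a general quantitative sufficient condition (Theorem~\ref{ne2:thm1}) via the Kazdan--Warner substitution $w=e^{-2u}$ (Lemma~\ref{ne2:lemma1}) and $W^{s,q}$-elliptic estimates, and then derives part (i) (Corollary~\ref{ne2:cor1}) as a trivial specialization; your route avoids that machinery for this case at the cost of generality.

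For part (ii), however, there is a genuine gap. You propose to find a diffeomorphism $\phi$ so that ``the argument of part (i) is applicable to $K\circ\phi$,'' but no diffeomorphism $\phi:\bar M\to\bar M$ can make $K\circ\phi<0$ everywhere when $\{K\geqslant 0\}\neq\emptyset$, since $\phi^{-1}(\{K\geqslant 0\})\neq\emptyset$. On that set the interior super-solution inequality for a nearly-constant $u_+$ fails pointwise: $-1-c\Delta\psi_+\geqslant (K\circ\phi)e^{2u_+}$ cannot hold where $K\circ\phi\geqslant 0$, because the right-hand side is nonnegative while the left-hand side is close to $-1$. Your construction of $u_+$ is governed by a pointwise sign condition on $K$, so it cannot be rescued merely by pushing mass around. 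The paper avoids this precisely by not using a nearly-constant super-solution: Lemma~\ref{ne2:lemma1} converts the super-solution inequality into an inequality for $w=e^{-2u}$ that is linear up to a gradient-quadratic error, and the $W^{2,q}$/Schauder estimate from Theorem~\ref{HL:thm1} controls $\lVert w-(\text{const})\rVert_{\calL^\infty}+\lVert\nabla w\rVert_{\calL^\infty}$ by the $\calL^q$-norm of the forcing. This turns the sufficient condition for a super-solution into the measure-theoretic hypothesis (\ref{ne2:eqn5}), which a diffeomorphism \emph{can} arrange (Corollary~\ref{ne2:cor2}): one shrinks $\phi^{-1}(\{K\geqslant 0\})$ to have small volume relative to the explicit threshold in (\ref{ne2:eqn10}). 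To make part (ii) work you would need to replace your pointwise constant super-solution with this $L^q$-robust construction (or an equivalent one), which is the key step you are missing.
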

Both results above are given in \S5. Other than the related works on compact Riemann surfaces with boundary we introduced above, many work has been done on closed Riemann surface, a comprehensive study was given by Kazdan and Warner \cite{KW2}, including results for all signs of $ \chi(\bar{M}) $. For Nirenberg problem, we refer to Chang and Yang \cite{CY} and Struwe \cite{STRUWE}, etc.. 
\medskip

The most common method in analyzing this type of Kazdan-Warner problem is by calculus of variations since we can consider the PDE as Euler-Lagrange equation with respect to some functional; recently Morse theory is also involved. However, a new method, inspired by Kazdan and Warner \cite{KW}, has been developed recently. This new method applies monotone iteration scheme, a local version of calculus of variation to classify the existence results by sign of the first eigenvalue $ \eta_{1} $ of conformal Laplacian. This method has been applied to completely solve the Escobar problem \cite{XU4}, the Han-Li conjecture \cite{XU5}, the prescribed scalar curvature problem on compact manifolds \cite{XU6}, a trichotomy theorem in terms of prescribed scalar curvature with Dirichlet condition at boundary \cite{XU7}, and a comprehensive study of zero first eigenvalue case on compact manifolds, possibly with boundary, with dimensions at least $ 3 $ \cite{XU8}. In this article, we apply a variation of the combination of monotone iteration scheme and local analysis to show the results of prescribed scalar and mean curvatures for the cases $ \eta_{1} > 0 $ and $ \eta_{1} < 0 $. We also develop a general monotone iteration scheme, which can handle nonlinear terms both in the PDE and on the boundary condition; this new monotone iteration scheme, see Theorem \ref{HL:thm3}, allows us to work on $ 2 $-dimensional case without using the calculus of variation. This systematic procedure is powerful, but unfortunately this direct method cannot be used to the classical manifold, the unit ball with spherical boundary. We will explain why this direct method does not work in this case. Note that Escobar \cite{ESC3} has proved a nontrivial Kazdan-Warner type obstruction of prescribed mean curvature functions for this case.
\medskip

This paper is organized as follows:

In \S2, we introduce the essential definitions and results that will be used throughout this article. We assume the backgrounds of standard elliptic theory. We also introduced two versions of monotone iteration schemes. Theorem \ref{HL:thm2} is for the PDE (\ref{intro:eqn1}); Theorem \ref{HL:thm3} is more general, works for all second order semi-linear elliptic PDE with Robin boundary conditions, possibly with zeroth order nonlinear term on the boundary condition. Theorem \ref{HL:thm3} works well for the PDE like (\ref{intro:eqn2}). 

In \S3, we give results for prescribing scalar curvature function $ S $ and mean curvature function $ H $ within a conformal class $ [g] $ on $ (\bar{M}, g) $, $ n = \dim \bar{M} \geqslant 3 $, provided that $ \eta_{1} < 0 $. When $ S < 0 $ everywhere and arbitrary $ H $, the results are given in Theorem \ref{negative:thm1} and Theorem \ref{negative:thm2}. When $ S < 0 $ somewhere and arbitrary $ H $, the results are given in Theorem \ref{negative:thm3} and Corollary \ref{negative:cor1} with some restriction on $ S $. The monotone iteration scheme plays a central role.

In \S4, we give results for prescribing scalar curvature function $ S $ and mean curvature function $ H $ for some metric conformally equivalent to $ g $ on $ (\bar{M}, g) $, $ n = \dim \bar{M} \geqslant 3 $, provided that $ \eta_{1} < 0 $. It follows from Corollary \ref{negative:cor1}. We conclude in Theorem 4.1 that any $ S $ that is negative somewhere can be realized as a scalar curvature function of some metric conformally equivalent to $ g $, with the mean curvature $ cH $ for small enough constant $ c > 0 $ and arbitrary $ H $.

In \S5, we discuss prescribing Gauss and geodesic curvature functions $ K, \sigma $ on compact Riemann surfaces with boundary for metrics conformally equivalent to the original metric $ g $. We show in Theorem \ref{ne2:thm1} that any function $ K $ that is negative somewhere and satisfies some analytic condition can be realized as Gaussian curvature function for some metric conformally equivalent to $ g $, the metric also has geodesic curvature $ c\sigma $ for some small enough constant $ c > 0 $ and arbitrary $ \sigma $. The result in Corollary \ref{ne2:cor1} says that when $ K < 0 $ everywhere on $ \bar{M} $, the metric can be chosen within a conformal class $ [g] $. 

In \S6, we give results for prescribing scalar function $ S $ and mean curvature function $ H $ for some metric conformally equivalent to $ g $ on $ (\bar{M}, g) $, $ n = \dim \bar{M} \geqslant 3 $, provided that $ \eta_{1} = 0 $. We show that any function $ S $ that changes sign can be realized as a scalar curvature function some metric conformally equivalent to $ g $, with the mean curvature $ cH $ for small enough constant $ c > 0 $ and arbitrary $ H $ in Corollary \ref{zero:thm2}. Obviously there is a trivial case $ S \equiv H \equiv 0 $.

In \S7, we consider the prescribing scalar and mean curvature problem for $ \eta_{1} > 0 $. The results in Theorem \ref{positive:thm1} and Theorem \ref{positive:thm2} are for the case $ S > 0 $ somewhere and arbitrary $ H $. We also explain why our method cannot work on closed Euclidean ball with some nontrivial mean curvature on the boundary $ \mathbb{S}^{n} $.

\section{The Preliminaries and The Monotone Iteration Scheme}
In this section, we first introduce the necessary definitions and essential results we need for the later sections, then introduce a general version of the monotone iteration scheme given in \cite{XU8}, other than many variations we have used in \cite{XU4, XU5, XU6, XU7, XU3}, with respect to the following Yamabe equation with Robin boundary condition
\begin{equation}\label{HL:eqn1}
-a\Delta_{g} u + R_{g} u = Su^{p-1} \; {\rm in} \; M, \frac{\partial u}{\partial \nu} + \frac{2}{p-2} h_{g} u = \frac{2}{p-2} H u^{\frac{p}{2}} \; {\rm on} \; \partial M.
\end{equation}
for given functions $ S, H \in \calC^{\infty}(\bar{M}) $, and $ n = \dim \bar{M} \geqslant 3 $. Lastly we introduce a $ W^{s, q} $-type regularity for elliptic PDE with Robin boundary conditions.

First of all, we give definitions of Sobolev spaces, a local version and a global version. Let $ \Omega $ be a connected, bounded, open subset of $ \R^{n} $ with smooth boundary $ \partial \Omega $ equipped with some Riemannian metric $ g $ that can be extended smoothly to $ \bar{\Omega} $. We call $ (\Omega, g) $ a Riemannian domain. Throughout this article, we denote the space of smooth functions with compact support by $ \calC_{c}^{\infty} $, smooth functions by $ \calC^{\infty} $, and continuous functions by $ \calC^{0} $.
\begin{definition}\label{HL:def1} Let $ (\Omega, g) $ be a Riemannian domain. Let $ (M, g) $ be a closed Riemannian $ n $-manifold, and $ (\bar{M}, g) $ be a compact Riemannian $n$-manifold with non-empty smooth boundary, with volume density $\dvol$. Let $u$ be a real valued function. Let $ \langle v,w \rangle_g$ and $ |v|_g = \langle v,v \rangle_g^{1/2} $ denote the inner product and norm  with respect to $g$. 

(i) 
For $1 \leqslant p < \infty $, we define the Lebesgue spaces on $ \Omega $ and $ \bar{M} $ to be
\begin{align*}
\mathcal{L}^{p}(\Omega)\ &{\rm is\ the\ completion\ of}\  \left\{ u \in \calC_c^{\infty}(\Omega) : \Vert u\Vert_p^p :=\int_{\Omega} \lvert u \rvert^{p} dx < \infty \right\},\\
\mathcal{L}^{p}(\Omega, g)\ &{\rm is\ the\ completion\ of}\ \left\{ u \in \calC_c^{\infty}(\Omega) : \Vert u\Vert_{p,g}^p :=\int_{\Omega} \left\lvert u \right\rvert^{p} d\text{Vol}_{g} < \infty \right\}, \\
\mathcal{L}^{p}(M, g)\ &{\rm is\ the\ completion\ of}\ \left\{ u \in \calC^{\infty}(M) : \Vert u\Vert_{p,g}^p :=\int_{M} \left\lvert u \right\rvert^{p} d\text{Vol}_{g} < \infty \right\}.
\end{align*}

(ii) For $\nabla u$  the Levi-Civita connection of $g$, 
and for $ u \in \calC^{\infty}(\Omega) $ or $ u \in \calC^{\infty}(\bar{M}) $,
\begin{equation}\label{HL:eqn2}
\lvert \nabla^{k} u \rvert_g^{2} := (\nabla^{\alpha_{1}} \dotso \nabla^{\alpha_{k}}u)( \nabla_{\alpha_{1}} \dotso \nabla_{\alpha_{k}} u).
\end{equation}
\noindent In particular, $ \lvert \nabla^{0} u \rvert^{2}_g = \lvert u \rvert^{2} $ and $ \lvert \nabla^{1} u \rvert^{2}_g = \lvert \nabla u \rvert_{g}^{2}.$\\

(iii) For $ s \in \mathbb{N}, 1 \leqslant p < \infty $, we define the $ (s, p) $-type Sobolev spaces on $ \Omega $ and $ \bar{M} $ to be
\begin{align}\label{HL:eqn3}
W^{s, p}(\Omega) &= \left\{ u \in \mathcal{L}^{p}(\Omega) : \lVert u \rVert_{W^{s,p}(\Omega)}^{p} : = \int_{\Omega} \sum_{j=0}^{s} \left\lvert D^{j}u \right\rvert^{p} dx < \infty \right\}, \\
W^{s, p}(\Omega, g) &= \left\{ u \in \mathcal{L}^{p}(\Omega, g) : \lVert u \rVert_{W^{s, p}(\Omega, g)}^{p} = \sum_{j=0}^{s} \int_{\Omega} \left\lvert \nabla^{j} u \right\rvert^{p}_g d\text{Vol}_{g} < \infty \right\} \nonumber, \\
W^{s, p}(M, g) &= \left\{ u \in \mathcal{L}^{p}(M, g) : \lVert u \rVert_{W^{s, p}(M, g)}^{p} = \sum_{j=0}^{s} \int_{M} \left\lvert \nabla^{j} u \right\rvert^{p}_g \dvol < \infty \right\} \nonumber.
\end{align}
\noindent Here $ \lvert D^{j}u \rvert^{p} := \sum_{\lvert \alpha \rvert = j} \lvert \partial^{\alpha} u \rvert^{p} $ 
in the weak sense. Similarly, $ W_{0}^{s, p}(\Omega) $ is the completion of $ \calC_{c}^{\infty}(\Omega) $ with respect to the 
$ W^{s, p} $-norm.
In particular, $ H^{s}(\Omega) : = W^{s, 2}(\Omega) $ and $ H^{s}(\Omega, g) : = W^{s, 2}(\Omega, g) $, $ H^{s}(M, g) : = W^{s, 2}(M, g) $ are the usual Sobolev spaces. We similarly define $H_{0}^{s}(\Omega), H_{0}^{s}(\Omega,g)$ and $ H_{0}^{s}(M, g) $.

(iv) On closed manifolds $ (M, g) $, we say that a function $ u \in  H^{s}(M, g) $ if $ u \in \calL^{2}(M, g) $ , and for any coordinate chart $ U \subset M $, any $ \psi \in \calC_{c}^{\infty}(U) $, the function $ \psi u \in H^{s}(U, g) $.
\end{definition}
\medskip
We assume the background of the standard elliptic theory, including the solvability of standard linear elliptic PDEs, elliptic regularity of $ H^{s} $-type, trace theorem, Sobolev embedding, Schauder estimates, etc. We introduce a $ W^{s, q} $-type elliptic regularity for later use.
\begin{theorem}\label{HL:thm1}\cite[Thm.~2.2]{XU5} Let $ (\bar{M}, g) $ be a compact manifold with smooth boundary $ \partial M $. Let $ \nu $ be the unit outward normal vector along $ \partial M $ and $ q > n = \dim \bar{M} $. Let $ L: \calC^{\infty}(\bar{M}) \rightarrow \calC^{\infty}(\bar{M}) $ be a uniform second order elliptic operator on $ M $ with smooth coefficients up to $ \partial M $ and can be extended to $ L : W^{2, q}(M, g) \rightarrow \calL^{q}(M, g) $. Let $ f \in \calL^{q}(M, g), \tilde{f} \in W^{1, q}(M, g) $. Let $ u \in H^{1}(M, g) $ be a weak solution of the following boundary value problem
\begin{equation}\label{HL:eqn4}
L u = f \; {\rm in} \; M, Bu = \frac{\partial u}{\partial \nu} + c(x) u = \tilde{f} \; {\rm on} \; \partial M.
\end{equation}
Here $ c \in \calC^{\infty}(M) $. Assume also that $ \text{Ker}(L) = \lbrace 0 \rbrace $ associated with the homogeneous Robin boundary condition. If, in addition, $ u \in \calL^{q}(M, g) $, then $ u \in W^{2, q}(M, g) $ with the following estimates
\begin{equation}\label{HL:eqn5}
\lVert u \rVert_{W^{2, q}(M, g)} \leqslant \gamma' \left(\lVert Lu \rVert_{\calL^{q}(M, g)} + \lVert Bu \rVert_{W^{1, q}(M, g)} \right)
\end{equation}
Here $ \gamma' $ depends on $ L, q, c $ and the manifold $ (\bar{M}, g) $ and is independent of $ u $.
\end{theorem}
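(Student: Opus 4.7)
The plan is to establish \eqref{HL:eqn5} by the classical three-step route of (1) local interior $W^{2,q}$ regularity, (2) boundary $W^{2,q}$ regularity via flattening, and (3) a compactness-contradiction argument that uses $\text{Ker}(L)=\{0\}$ to absorb the lower-order norm of $u$.

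First I would fix a finite atlas of coordinate charts $\{(U_i,\varphi_i)\}$ on $\bar M$, consisting of interior charts whose closures lie inside $M$ together with boundary charts that send $U_i\cap M$ diffeomorphically onto a half-ball in $\R^n$ and $U_i\cap\partial M$ onto the flat piece, and choose a smooth partition of unity $\{\chi_i\}$ subordinate to it. On each interior chart, the uniform ellipticity of $L$ together with the assumed integrability $u\in\calL^q$ lets me apply classical Calder\'on--Zygmund $W^{2,q}$ theory to $\chi_i u$; a first-pass interior estimate promotes $u\in H^1\cap\calL^q$ to $W^{1,q}_{\rm loc}$ in the interior, which is precisely the hypothesis consumed by the $W^{2,q}$ theory. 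On each boundary chart, I flatten the boundary via $\varphi_i$, transcribe $B$ as an oblique derivative condition $\partial_n u+c(x)u=\tilde f$ up to smooth lower-order tangential terms, and invoke the Agmon--Douglis--Nirenberg / Solonnikov $L^q$ boundary regularity for oblique boundary value problems on the half-space to obtain a local bound $\|\chi_i u\|_{W^{2,q}} \leq C\bigl(\|f\|_{\calL^q}+\|\tilde f\|_{W^{1,q}}+\|u\|_{\calL^q}\bigr)$. Summing over $i$ and absorbing the first-order commutator terms $[L,\chi_i]u$ by interpolation between $\calL^q$ and $W^{2,q}$ with a small parameter yields the preliminary global estimate
\begin{equation*}
\|u\|_{W^{2,q}(M,g)} \leq C\bigl(\|Lu\|_{\calL^q(M,g)}+\|Bu\|_{W^{1,q}(M,g)}+\|u\|_{\calL^q(M,g)}\bigr).
\end{equation*}

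To remove the $\|u\|_{\calL^q(M,g)}$ term, I argue by contradiction using $\text{Ker}(L)=\{0\}$. If no uniform $\gamma'$ as in \eqref{HL:eqn5} existed, one could extract a sequence $u_k$ with $\|u_k\|_{W^{2,q}}=1$ and $\|Lu_k\|_{\calL^q}+\|Bu_k\|_{W^{1,q}}\to 0$; since $q>n$ and $M$ is compact, the embedding $W^{2,q}(M,g)\hookrightarrow\calL^q(M,g)$ is compact, so a subsequence converges strongly in $\calL^q$, and the preliminary estimate then upgrades this to a Cauchy sequence in $W^{2,q}$. The limit $u_\infty$ satisfies $\|u_\infty\|_{W^{2,q}}=1$, $Lu_\infty=0$ in $M$, and $Bu_\infty=0$ on $\partial M$, contradicting the hypothesis $\text{Ker}(L)=\{0\}$.

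The main obstacle is the boundary step: one needs to verify that the Robin condition $Bu=\partial u/\partial\nu+cu=\tilde f$ satisfies the Lopatinskii-Shapiro complementarity condition for the second-order elliptic operator $L$ (which is immediate since $\partial/\partial\nu$ is transverse to $\partial M$), and then carry out the $L^q$ boundary estimates carefully in curvilinear coordinates, bookkeeping the smooth variable coefficients that arise from the metric $g$ and from the flattening diffeomorphism. A secondary technicality is matching trace regularities: $\tilde f\in W^{1,q}$ has trace on $\partial M$ in $W^{1-1/q,q}(\partial M)$, which is the natural target space for the Neumann-Robin trace of a $W^{2,q}$ function, so the boundary equation makes strong sense once the target regularity is in hand.
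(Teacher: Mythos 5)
The paper does not prove this theorem; it is stated as a citation \cite[Thm.~2.2]{XU5}, so there is no in-house proof to compare your proposal against. That said, your proposed route is the standard and mathematically sound way to establish a result of this type: local interior $W^{2,q}$ (Calder\'on--Zygmund) estimates, local boundary $W^{2,q}$ estimates for the Robin problem after flattening (the Lopatinskii--Shapiro condition is automatic for a transverse first-order boundary operator on a second-order elliptic operator), patching via a partition of unity with the commutator terms absorbed by Ehrling-type interpolation, which yields the preliminary estimate with $\|u\|_{\calL^q}$ on the right, and finally a compactness-contradiction argument using $\mathrm{Ker}(L)=\{0\}$ together with the compact embedding $W^{2,q}(M,g)\hookrightarrow\calL^q(M,g)$ to drop the lower-order term and obtain a constant $\gamma'$ independent of $u$.

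Two small points you should make explicit if you wrote this out in full. First, the regularity upgrade (showing $u\in W^{2,q}$, not merely the a priori bound) requires a genuine bootstrap from the hypothesis $u\in H^1\cap\calL^q$: one typically first invokes $H^2$ regularity (since $f\in\calL^q\subset\calL^2$), then iterates Sobolev embeddings $W^{2,p}\hookrightarrow W^{1,p^*}$ until the exponent clears $q$, using $u\in\calL^q$ to control the zeroth-order term at each stage; your sentence ``a first-pass interior estimate promotes $u$ to $W^{1,q}_{\mathrm{loc}}$'' is the right idea but compresses several steps. Second, in the compactness argument you should apply the preliminary estimate to the differences $u_k-u_j$ and note that $\|L(u_k-u_j)\|_{\calL^q}\to 0$, $\|B(u_k-u_j)\|_{W^{1,q}}\to 0$, and $\|u_k-u_j\|_{\calL^q}\to 0$ along the subsequence (the last by Rellich--Kondrachov), which is what makes $(u_k)$ Cauchy in $W^{2,q}$; as written, the logic is correct but the mechanism is left implicit. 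Neither is a gap in the mathematics, only in the exposition.
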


We then introduce the first eigenvalue of conformal Laplacian. Note that $ a = \frac{4(n - 1)}{n - 2} $ and $ p = \frac{2n}{n - 2} $, hence it only makes sense when $ n \geqslant 3 $.
\begin{definition}\label{HL:def2}
Let $ (\bar{M}, g) $ be a compact manifold with non-empty smooth boundary $ \partial M $. We denote $ \eta_{1} $ be the first eigenvalue of conformal Laplacian with its corresponding eigenfunction $ \varphi > 0 $ if and only if the following PDE holds.
\begin{equation}\label{HL:eqn6}
-a\Delta_{g} \varphi + R_{g} \varphi = \eta_{1} \varphi \; {\rm in} \; M, \frac{\partial \varphi}{\partial \nu} + \frac{2}{p-2} h_{g} \varphi = 0 \; {\rm on} \; \partial M.
\end{equation}
\end{definition}

We now introduce a variation of the monotone iteration scheme we used in \cite{XU4}, \cite{XU5} and \cite{XU6}. In particular, we do require $ h_{g} = h > 0 $ to be some positive constant on $ \partial M $, this can be done due to the proof of the Han-Li conjecture in \cite{XU5}. We will also use other versions of monotone iteration schemes introduced in eariler work \cite{XU4, XU5, XU6, XU7, XU3}.
\begin{theorem}\label{HL:thm2}\cite[Thm.~2.4]{XU8}
Let $ (\bar{M}, g) $ be a compact manifold with smooth boundary $ \partial M $. Let $ \nu $ be the unit outward normal vector along $ \partial M $ and $ q > \dim \bar{M} $. Let $ S \in \calC^{\infty}(\bar{M}) $ and $ H \in \calC^{\infty}(\bar{M}) $ be given functions. Let the mean curvature $ h_{g} = h > 0 $ be some positive constant. In addition, we assume that $ \sup_{\bar{M}} \lvert H \rvert $ is small enough. Suppose that there exist $ u_{-} \in \calC_{0}(\bar{M}) \cap H^{1}(M, g) $ and $ u_{+} \in W^{2, q}(M, g) \cap \calC_{0}(\bar{M}) $, $ 0 \leqslant u_{-} \leqslant u_{+} $, $ u_{-} \not\equiv 0 $ on $ \bar{M} $, some constants $ \theta_{1} \leqslant 0, \theta_{2} \geqslant 0 $ such that
\begin{equation}\label{HL:eqn7}
\begin{split}
-a\Delta_{g} u_{-} + R_{g} u_{-} - S u_{-}^{p-1} & \leqslant 0 \; {\rm in} \; M, \frac{\partial u_{-}}{\partial \nu} + \frac{2}{p-2} h_{g} u_{-} \leqslant \theta_{1} u_{-} \leqslant \frac{2}{p-2} H u_{-}^{\frac{p}{2}} \; {\rm on} \; \partial M \\
-a\Delta_{g} u_{+} + R_{g} u_{+} - S u_{+}^{p-1} & \geqslant 0 \; {\rm in} \; M, \frac{\partial u_{+}}{\partial \nu} + \frac{2}{p-2} h_{g} u_{+} \geqslant \theta_{2} u_{+} \geqslant \frac{2}{p-2} H u_{+}^{\frac{p}{2}} \; {\rm on} \; \partial M
\end{split}
\end{equation}
holds weakly. In particular, $ \theta_{1} $ can be zero if $ H \geqslant 0 $ on $ \partial M $, and $ \theta_{1} $ must be negative if $ H < 0 $ somewhere on $ \partial M $; similarly, $ \theta_{2} $ can be zero if $ H \leqslant 0 $ on $ \partial M $, and $ \theta_{2} $ must be positive if $ H > 0 $ somewhere on $ \partial M $. Then there exists a real, positive solution $ u \in \calC^{\infty}(M) \cap \calC^{1, \alpha}(\bar{M}) $ of
\begin{equation}\label{HL:eqn8}
\Box_{g} u = -a\Delta_{g} u + R_{g} u = S u^{p-1}  \; {\rm in} \; M, B_{g} u =  \frac{\partial u}{\partial \nu} + \frac{2}{p-2} h_{g} u = \frac{2}{p-2} H u^{\frac{p}{2}} \; {\rm on} \; \partial M.
\end{equation}
\end{theorem}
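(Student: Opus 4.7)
The plan is to set up a monotone iteration scheme that linearizes (\ref{HL:eqn8}) at each step and exploits the ordered pair $u_-, u_+$. First I would introduce constants $K, \kappa > 0$ large enough that
\[
L u := -a\Delta_{g} u + (R_{g} + K) u, \quad B u := \frac{\partial u}{\partial \nu} + \left( \tfrac{2}{p-2} h + \kappa \right) u
\]
is a linear Robin system whose zeroth-order coefficients are strictly positive. The strong maximum principle then forces $L v \geqslant 0$ in $M$ and $B v \geqslant 0$ on $\partial M$ to imply $v \geqslant 0$, and Theorem \ref{HL:thm1} gives invertibility from $W^{2, q}(M, g)$ onto $\calL^{q}(M, g) \times W^{1 - 1/q, q}(\partial M)$ for $q > n$. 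Simultaneously, I would choose $K, \kappa$ so that the maps
\[
F(x, t) := S(x) t^{p-1} + K t, \quad G(x, t) := \tfrac{2}{p-2} H(x) t^{\frac{p}{2}} + \kappa t
\]
are nondecreasing in $t$ on $[0, \sup_{\bar{M}} u_{+}]$; since $p - 1 > 0$ and $\tfrac{p}{2} - 1 > 0$, it suffices to take $K \geqslant (p-1) \lVert S \rVert_{\infty} (\sup u_{+})^{p-2}$ and $\kappa \geqslant \tfrac{p}{p-2} \lVert H \rVert_{\infty} (\sup u_{+})^{\frac{p}{2} - 1}$, and the smallness hypothesis on $\sup \lvert H \rvert$ keeps $\kappa$ of moderate size relative to $\tfrac{2}{p-2} h$.

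Next I would recast (\ref{HL:eqn8}) as the fixed-point problem $L u = F(\cdot, u)$ in $M$, $B u = G(\cdot, u)$ on $\partial M$, and iterate: set $u_{0} := u_{-}$ and define $u_{k+1}$ as the unique $W^{2, q}$-solution of
\[
L u_{k+1} = F(\cdot, u_{k}) \; {\rm in} \; M, \quad B u_{k+1} = G(\cdot, u_{k}) \; {\rm on} \; \partial M.
\]
I would then prove by induction that $u_{-} \leqslant u_{k} \leqslant u_{k+1} \leqslant u_{+}$. For the base case $u_{1} \geqslant u_{-}$, the subsolution hypothesis yields $L u_{-} \leqslant S u_{-}^{p-1} + K u_{-} = F(\cdot, u_{-})= L u_{1}$ in $M$, and on $\partial M$ the chain $\frac{\partial u_{-}}{\partial \nu} + \tfrac{2}{p-2} h u_{-} \leqslant \theta_{1} u_{-} \leqslant \tfrac{2}{p-2} H u_{-}^{\frac{p}{2}}$ gives, after adding $\kappa u_{-}$ to both ends, $B u_{-} \leqslant G(\cdot, u_{-}) = B u_{1}$; the maximum principle closes the argument. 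The comparison $u_{k+1} \leqslant u_{+}$ is symmetric, using the supersolution inequalities with $\theta_{2}$, and the order $u_{k+1} \leqslant u_{k+2}$ is immediate from the monotonicity of $F, G$ in $t$.

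Finally I would pass to the limit. The sequence is uniformly bounded in $\calL^{\infty}(\bar{M})$, so $F(\cdot, u_{k})$ is bounded in $\calL^{q}(M, g)$ and $G(\cdot, u_{k})$ in $W^{1, q}(M, g)$; Theorem \ref{HL:thm1} then delivers a uniform bound for $u_{k+1}$ in $W^{2, q}(M, g)$ with $q > n$. Compact Sobolev embedding $W^{2, q}(M, g) \hookrightarrow \calC^{1, \alpha}(\bar{M})$ extracts a $\calC^{1, \alpha}$-convergent subsequence, whose limit agrees with the pointwise monotone limit $u$ and solves (\ref{HL:eqn8}) weakly, hence strongly. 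Interior $\calC^{\infty}$-regularity follows by Schauder bootstrap, and boundary $\calC^{1, \alpha}$-regularity from the boundary Schauder estimate. Positivity of $u$ comes from $u \geqslant u_{-} \geqslant 0$ with $u_{-} \not\equiv 0$, combined with the strong maximum principle for $L$.

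The main obstacle is the nonlinear Robin term $\tfrac{2}{p-2} H u^{\frac{p}{2}}$: one must choose $\kappa$ so that $G(\cdot, t)$ is monotone in $t$ on $[0, \sup u_{+}]$ while also ensuring the Hopf-type boundary point lemma needed for the linear comparison. The threshold constants $\theta_{1}, \theta_{2}$ with their sign conventions tied to that of $H$ are precisely the device that keeps the sub/supersolution boundary inequalities intact after adding $\kappa u$ to both sides; without them, the comparison step would fail when $H$ changes sign on $\partial M$.
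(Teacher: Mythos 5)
Your proposal follows the same overall architecture as the paper's (monotone iteration via a shifted linear Robin operator, weak maximum principle for ordering, $W^{2,q}$ bounds plus compactness), but two choices create genuine gaps.

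\textbf{Direction of the iteration.} You take $u_0 := u_-$ and iterate upward. But $u_-$ is only assumed to lie in $\calC^0(\bar M)\cap H^1(M,g)$, so $\nabla u_-$ is only in $\calL^2$; the boundary datum $G(\cdot,u_-)$ need not lie in $W^{1,q}(M,g)$ for $q>n$, and Theorem~\ref{HL:thm1} cannot be invoked for the first step. The paper avoids this by starting from $u_0 := u_+\in W^{2,q}(M,g)$ and iterating downward; the weak sub-solution only enters through a test-function pairing argument showing $u_k\geqslant u_-$, not as a source term. If you insist on iterating upward you must first replace $u_-$ by a more regular sub-solution, or treat the first iterate in a weaker function space.

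\textbf{Uniform $W^{2,q}$ bound.} You assert that a uniform $\calL^\infty$ bound on $u_k$ implies $G(\cdot,u_k)$ is bounded in $W^{1,q}(M,g)$. This is false: the chain rule gives $\nabla_x\bigl[G(x,u_k(x))\bigr]=(\nabla_x G)(x,u_k)+(\partial_u G)(x,u_k)\,\nabla u_k$, so the $W^{1,q}$ norm of the boundary datum involves $\lVert\nabla u_k\rVert_{\calL^q}$, which is exactly the quantity you are trying to bound. The argument is circular, and closing it is where the hypothesis ``$\sup_{\bar M}|H|$ small'' is actually used: the paper's condition~(\ref{HL:eqn19}) makes the coefficient of $\lVert u_k\rVert_{W^{1,q}}$ in the inductive estimate $\lVert u_{k+1}\rVert_{W^{2,q}}\leqslant\gamma'\bigl(\lVert F(\cdot,u_k)\rVert_{\calL^q}+\lVert(B-\sigma)u_k+G(\cdot,u_k)\rVert_{W^{1,q}}\bigr)$ small enough to prevent blow-up of the bound along the iteration. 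Your explanation that smallness of $H$ merely ``keeps $\kappa$ of moderate size'' misses this; $\kappa$ may be any positive constant without harming invertibility, and its size is not what requires the smallness hypothesis. The remaining steps (monotonicity of $F,G$ in $t$ via the added linear terms, Arzel\`a--Ascoli, Schauder bootstrap, strong maximum principle for positivity) match the paper and are fine.
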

\medskip

The following two results are necessary, which shows the existence of the solution of some local Yamabe-type problem. When the manifold is not locally conformally flat, we need
\begin{proposition}\label{HL:prop1}\cite[Prop.~3.2]{XU8}
Let $ (\Omega, g) $ be a Riemannian domain in $\R^n$, $ n \geqslant 3 $, not locally conformally flat, with $C^{\infty} $ boundary, with ${\rm Vol}_g(\Omega)$ and the Euclidean diameter of $\Omega$ sufficiently small. Let $ f \in \Omega' \supset \Omega $ be a positive, smooth function in some open region $ \Omega' $. In addition, we assume that the first eigenvalue of Laplace-Beltrami operator $ -\Delta_{g} $ on $ \Omega $ with Dirichlet condition satisfies $ \lambda_{1} \rightarrow \infty $ as $ \Omega $ shrinks. Assume $ R_{g} < 0 $ within the small enough closed domain $ \bar{\Omega} $. Then the Dirichlet problem
\begin{equation}\label{HL:eqn9}
-a\Delta_{g} u + R_{g} u = f u^{p-1} \; {\rm in} \; \Omega, u \equiv 0 \; {\rm on} \; \partial \Omega
\end{equation}
 has a real, positive, smooth solution $ u \in \calC^{\infty}(\Omega) \cap H_{0}^{1}(\Omega, g) \cap \calC^{0}(\bar{\Omega}) $. The size of $ \Omega $ is depending on the function $ f $.
\end{proposition}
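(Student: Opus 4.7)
The plan is to produce $ u $ as a minimizer of the Yamabe-type energy
\begin{equation*}
\mathcal{E}(u) = \int_\Omega \left( a |\nabla u|_g^2 + R_g u^2 \right) \dvol
\end{equation*}
on the constraint set $ \mathcal{M} = \{ v \in H_0^1(\Omega, g) : \int_\Omega f |v|^p \dvol = 1 \} $, with $ \Lambda := \inf_\mathcal{M} \mathcal{E} $. Any such minimizer satisfies the Euler--Lagrange equation $ -a \Delta_g u + R_g u = \Lambda f u^{p-1} $ weakly, and then an explicit positive constant multiple of $ u $ solves \eqref{HL:eqn9}. Coercivity of $ \mathcal{E} $ on $ H_0^1(\Omega, g) $ follows from the Dirichlet Poincaré inequality $ \int u^2 \leq \lambda_1^{-1} \int |\nabla u|^2 $ combined with $ R_g $ bounded and $ \lambda_1 \to \infty $ as $ \Omega $ shrinks: once $ a \lambda_1 > \sup_{\bar\Omega} |R_g| $, the quadratic form is equivalent to $ \|\cdot\|_{H_0^1}^2 $, which in turn forces $ \Lambda > 0 $ via the Sobolev embedding $ H_0^1 \hookrightarrow \calL^p $.

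The central obstacle is the critical exponent $ p = \tfrac{2n}{n-2} $, at which $ H_0^1(\Omega, g) \hookrightarrow \calL^p(\Omega, g) $ is not compact. To circumvent it I would establish the strict sub-threshold bound
\begin{equation*}
\Lambda \; < \; \frac{a \, S_n}{\left( \sup_\Omega f \right)^{2/p}},
\end{equation*}
with $ S_n $ the sharp Euclidean Sobolev constant. Fix a point $ p_0 \in \Omega $ where the Weyl tensor does not vanish (for $ n \geq 4 $), guaranteed by the non-locally-conformally-flat hypothesis, and, via a normal chart centered at $ p_0 $, test $ \mathcal{E} $ on cut-off Aubin bubbles
\begin{equation*}
u_\epsilon(x) = \chi(x) \frac{\epsilon^{(n-2)/2}}{(\epsilon^2 + |x|^2)^{(n-2)/2}}.
\end{equation*}
The standard Aubin expansion of $ a \int |\nabla u_\epsilon|_g^2 \, / \, (\int f u_\epsilon^p)^{2/p} $ yields, beyond the Euclidean leading term $ aS_n/(\sup f)^{2/p} $, a strictly negative correction of order $ \epsilon^4 |W_g(p_0)|_g^2 $ in dimensions $ n \geq 6 $; in the low-dimensional regime $ n \in \{3,4,5\} $ the additional term $ \int R_g u_\epsilon^2 $, negative by $ R_g < 0 $, provides the required decrease in the spirit of Brezis--Nirenberg's linear perturbation argument. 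The smallness of $ \Omega $ and the choice of $ f $ enter precisely here, explaining the last sentence of the proposition.

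With the sub-threshold bound in hand, a minimizing sequence $ \{u_k\} \subset \mathcal{M} $ is bounded in $ H_0^1(\Omega, g) $ by coercivity, so a subsequence converges weakly to some $ u \in H_0^1(\Omega, g) $. Lions' concentration-compactness principle rules out mass loss to a concentrating bubble, since such concentration would force $ \Lambda \geq aS_n/(\sup f)^{2/p} $; therefore the convergence is strong in $ \calL^p $, $ u \in \mathcal{M} $, and $ u $ attains $ \Lambda $. Replacing $ u $ by $ |u| $ leaves $ \mathcal{E} $ and the constraint unchanged, so $ u \geq 0 $. Applying Theorem \ref{HL:thm1} to the Euler--Lagrange equation, with $ Lu = -a\Delta_g u + R_g u $ and right-hand side $ \Lambda f u^{p-1} \in \calL^q $, bootstraps $ u $ to $ W^{2,q}(\Omega, g) $ for every $ q > n $, and interior Schauder estimates then upgrade to $ u \in \calC^\infty(\Omega) \cap \calC^0(\bar\Omega) $. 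Strict positivity throughout $ \Omega $ follows from the strong maximum principle applied to $ -a\Delta_g u + (R_g - \Lambda f u^{p-2}) u = 0 $, since $ u \geq 0 $ is nontrivial and the zero-order coefficient is bounded. The hardest step remains the strict sub-threshold estimate, which is the only place where both the non-locally-conformally-flat hypothesis and the smallness of $ \Omega $ enter essentially.
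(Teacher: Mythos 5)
The paper gives no proof for this statement; it is cited verbatim from \cite[Prop.~3.2]{XU8}. That said, the paper's introduction describes the underlying technique as ``a local version of calculus of variation,'' and your proposal is exactly the standard Aubin/Brezis--Nirenberg variational framework: minimize the conformal energy on an $\calL^p$-constraint sphere, prove a strict sub-threshold estimate using cut-off instantons, rule out concentration, then bootstrap regularity and apply the strong maximum principle. The architecture of your proposal is therefore sound and almost certainly consistent with the cited source, and most of the individual steps (coercivity via $\lambda_1 \to \infty$, the Lagrange multiplier normalization $v = \Lambda^{1/(p-2)} u$, concentration-compactness against $\sup_\Omega f$, $W^{2,q}$ elliptic bootstrap) are carried out correctly.

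However, there is a real gap in dimension $3$, and it concerns precisely the interplay you claim is unproblematic. For $n = 3$ the Aubin bubble has $\int_\Omega u_\epsilon^2 \sim \epsilon\,\rho$ and the Dirichlet cutoff costs $\int_\Omega |\nabla u_\epsilon|_g^2 - S_3^{3/2} \sim \epsilon/\rho$, where $\rho$ is the diameter of $\Omega$; both corrections are $O(\epsilon)$. The Brezis--Nirenberg sub-threshold therefore requires $|R_g|\,\rho^2 \gtrsim a$, which is \emph{violated} as $\Omega$ shrinks, not helped by it. Equivalently, the $n=3$ Brezis--Nirenberg threshold $\lambda^*(\Omega)$ (determined by the Robin function) blows up as $\Omega$ shrinks, while your effective parameter $\lambda = -\inf R_g / a$ is fixed. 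The non-LCF hypothesis gives you no escape here either: for $n = 3$ ``not locally conformally flat'' concerns the Cotton tensor, which does not appear at a useful order in the Yamabe energy expansion. You also assert that smallness of $\Omega$ ``enters essentially'' into the sub-threshold bound; in fact it helps only with coercivity, is neutral for $n \in \{4,5\}$, and works \emph{against} you for $n = 3$. You would need to explain how the $n=3$ case is handled --- whether by an additional constraint on $\Omega$, by a different test function, or by a separate argument --- before the proof is complete as stated for all $n \geqslant 3$.
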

When the manifold is locally conformally flat, we give the local solution of (\ref{HL:eqn9}) provided that $ \Omega $ is not topologically trivial.
\begin{proposition}\label{HL:prop2}\cite[Prop.~2.5]{XU6}
Let $ (\Omega, g) $ be a Riemannian domain in $\R^n$, $ n \geqslant 3 $, with $C^{\infty} $ boundary. Let the metric $ g $ be locally conformally flat on some open subset $ \Omega' \supset \bar{\Omega} $. For any point $ \rho \in \Omega $ and any positive constant $ \epsilon $, denote the region $ \Omega_{\epsilon} $ to be
\begin{equation*}
\Omega_{\epsilon} = \lbrace x \in \Omega | \lvert x - \rho \rvert > \epsilon \rbrace.
\end{equation*}
Assume that $ Q \in \calC^{2}(\bar{\Omega}) $, $ \min_{x \in \bar{\Omega}} Q(x) > 0 $ and $ \nabla Q(\rho) \neq 0 $. Then there exists some $ \epsilon_{0} $ such that for every $ \epsilon \in (0, \epsilon_{0}) $ the Dirichlet problem
\begin{equation}\label{HL:eqn10}
-a\Delta_{g}u + R_{g} u = Qu^{p-1} \; {\rm in} \; \Omega_{\epsilon}, u = 0 \; {\rm on} \; \partial \Omega_{\epsilon}
\end{equation}
has a real, positive, smooth solution $ u \in \calC^{\infty}(\Omega_{\epsilon}) \cap H_{0}^{1}(\Omega_{\epsilon}, g) \cap \calC^{0}(\bar{\Omega_{\epsilon}}) $.
\end{proposition}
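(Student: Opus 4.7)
My plan is to reduce (\ref{HL:eqn10}) to a pure critical semilinear equation in Euclidean space via the local conformal change of metric, and then produce a positive solution by constrained minimization; the hypothesis $\nabla Q(\rho) \neq 0$ will enter through a test function estimate to guarantee that the infimum is attained.

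Since $g$ is locally conformally flat on a neighborhood $\Omega'$ of $\bar\Omega$, I can write $g = \psi^{p-2} g_E$ for some positive $\psi \in \calC^\infty(\Omega')$ and Euclidean metric $g_E$. The conformal covariance of the conformal Laplacian together with $R_{g_E} \equiv 0$ gives
\begin{equation*}
-a\Delta_g u + R_g u = \psi^{1-p}\bigl(-a\Delta_{g_E}(\psi u)\bigr),
\end{equation*}
so $u$ solves (\ref{HL:eqn10}) if and only if $w := \psi u$ solves the Euclidean Dirichlet problem
\begin{equation*}
-a\Delta_{g_E} w = Q w^{p-1} \text{ in } \Omega_\epsilon, \qquad w = 0 \text{ on } \partial \Omega_\epsilon.
\end{equation*}

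For the reduced problem I would set up the constrained minimization
\begin{equation*}
\mu_\epsilon := \inf \left\{ a \int_{\Omega_\epsilon} |\nabla v|^2 \, dx : v \in H_0^1(\Omega_\epsilon), \, \int_{\Omega_\epsilon} Q |v|^p \, dx = 1 \right\},
\end{equation*}
and try to attain the infimum. Concentration-compactness implies pre-compactness of minimizing sequences as soon as the strict inequality $\mu_\epsilon < a S_n (\max_{\bar\Omega} Q)^{-2/p}$ holds, where $S_n$ is the best Sobolev constant on $\R^n$. The main obstacle is to verify this strict inequality for all sufficiently small $\epsilon$. I would test $\mu_\epsilon$ against a truncated Aubin bubble
\begin{equation*}
U_{\lambda, x_0}(x) = \frac{\lambda^{(n-2)/2}}{\bigl(1 + \lambda^2|x - x_0|^2\bigr)^{(n-2)/2}}
\end{equation*}
with center $x_0$ on $\partial B_\epsilon(\rho)$ chosen so that the direction $e := (x_0 - \rho)/\epsilon$ satisfies $\nabla Q(\rho) \cdot e > 0$, which is available precisely because $\nabla Q(\rho) \neq 0$. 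The hole breaks the radial symmetry of the bubble at $x_0$, so the linear part of the Taylor expansion of $Q$ no longer integrates to zero against $U^p$, and the resulting contribution $\nabla Q(\rho) \cdot e$ is strictly positive; after balancing $\lambda$ against $\epsilon$, this produces the required strict inequality for every $\epsilon \in (0, \epsilon_0)$ for some $\epsilon_0 > 0$.

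Once $\mu_\epsilon$ is attained by a non-negative $v$, a Lagrange-multiplier rescaling gives a weak solution $w \geqslant 0$ of the semilinear Dirichlet problem. The strong maximum principle applied to $-a\Delta_{g_E} w = Q w^{p-1} \geqslant 0$ forces $w > 0$ throughout $\Omega_\epsilon$, and Theorem \ref{HL:thm1} followed by Schauder bootstrap yields $w \in \calC^\infty(\Omega_\epsilon) \cap \calC^0(\bar{\Omega_\epsilon})$. The desired solution of (\ref{HL:eqn10}) is then $u = w/\psi$.
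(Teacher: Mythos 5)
The conformal reduction to the Euclidean Dirichlet problem $-a\Delta_{g_E} w = Q w^{p-1}$ in $\Omega_\epsilon$ with $w = \psi u$ is correct and is precisely the content of Remark \ref{HL:re1}. The difficulty lies in the variational step, which as written cannot work.

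Your constrained minimization
\begin{equation*}
\mu_\epsilon = \inf\left\{ a\int_{\Omega_\epsilon}\lvert\nabla v\rvert^2\,dx : v \in H_0^1(\Omega_\epsilon),\ \int_{\Omega_\epsilon} Q\lvert v\rvert^p\,dx = 1 \right\}
\end{equation*}
can never satisfy the strict inequality $\mu_\epsilon < a S_n (\max_{\bar\Omega} Q)^{-2/p}$. Indeed, for any admissible $v$ one has $\int Q\lvert v\rvert^p \leqslant (\max_{\bar\Omega} Q)\Vert v\Vert_p^p$, so the constraint forces $\Vert v\Vert_p^2 \geqslant (\max_{\bar\Omega} Q)^{-2/p}$, and the Sobolev inequality (which holds on $H_0^1$ of any domain of $\R^n$ with the full-space constant $S_n$) then gives $a\Vert\nabla v\Vert_2^2 \geqslant a S_n (\max_{\bar\Omega} Q)^{-2/p}$. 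Hence $\mu_\epsilon \geqslant a S_n (\max_{\bar\Omega} Q)^{-2/p}$ for every $\epsilon$, and no choice of test function --- no matter how the bubble is placed relative to the hole or to $\nabla Q$ --- can break this barrier. This is the familiar fact that the pure critical Sobolev quotient is never attained on a proper subdomain of $\R^n$: absent a favorable subcritical perturbation (as in Brezis--Nirenberg), constrained minimization at the ground-state level produces no solution. The symmetry-breaking contribution of $\nabla Q(\rho) \neq 0$ that you invoke enters at a lower order and cannot close an $O(1)$ gap; moreover, centering a bubble at $x_0 \in \partial B_\epsilon(\rho)$, where the Dirichlet condition forces the test function to vanish at the concentration point, incurs an $O(1)$ energy loss rather than a small truncation error.

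The correct mechanism here is not ground-state minimization. Existence on annular domains for critical exponent Dirichlet problems comes from the nontrivial topology of $\Omega_\epsilon$ (in the spirit of Bahri--Coron, producing a higher critical level), or, in the cited reference's setting, from a subcritical approximation $-a\Delta_g u + R_g u = Q u^{q-1}$, $q < p$, combined with a blow-up analysis; the hypothesis $\nabla Q(\rho) \neq 0$ is then used via a Pohozaev-type identity to exclude concentration as $q \uparrow p$, not to prove a strict energy inequality. You would need to replace the minimization argument with one of these mechanisms for the proof to go through.
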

\begin{remark}\label{HL:re1}
It is straightforward to see that under conformal change $ \tilde{g} = \phi^{p-2} g $, we have
\begin{equation}\label{HL:eqn11}
\tilde{g} = \phi^{p-2} g \Rightarrow -a\Delta_{\tilde{g}} + R_{\tilde{g}} = \phi^{-\frac{n+2}{n-2}} \left(-a\Delta_{g} + R_{g} \right) \phi \Leftrightarrow \Box_{\tilde{g}} = \phi^{1 - p} \Box_{g} \phi.
\end{equation}
We call (\ref{HL:eqn11}) the conformal invariance of the conformal Laplacian. It follows from Proposition \ref{HL:prop2} and (\ref{HL:eqn11}) that if the manifold $ (\bar{M}, g) $ is locally conformally flat in the interior, the equation (\ref{HL:eqn10}) is equivalent to \begin{equation}\label{HL:eqn12}
-a\Delta_{g_{e}} u = Q u^{p-1} \; {\rm in} \; \Omega_{\epsilon}, u = 0 \; {\rm on} \; \partial \Omega_{\epsilon}
\end{equation}
which admits a positive solution $ u \in \calC^{\infty}(\Omega_{\epsilon}) \cap H_{0}^{1}(\Omega_{\epsilon}, g) \cap \calC^{0}(\bar{\Omega_{\epsilon}}) $.
\end{remark}
\medskip

As a prerequisite, we also need a result in terms of the perturbation of negative first eigenvalue of conformal Laplacian.
\begin{proposition}\label{HL:prop3}
Let $ (\bar{M}, g) $ be a compact Riemannian manifold with non-empty smooth boundary $ \partial M $, $ n = \dim \bar{M} \geqslant 3 $. Let $ \beta > 0 $ be a small enough constant. If $ \eta_{1}' < 0 $, then the quantity
\begin{equation*}
\eta_{1, \beta}' = \inf_{u \neq 0} \frac{a\int_{M} \lvert \nabla_{g} u \rvert^{2} \dvol + \int_{M} R_{g} u^{2} \dvol + \frac{2a}{p-2} \int_{\partial M} (h_{g} + \beta) u^{2} dS}{\int_{M} u^{2} \dvol} < 0.
\end{equation*}
In particular, $ \eta_{1, \beta}' $ satisfies
\begin{equation}\label{HL:eqn12}
-a\Delta_{g} \varphi + R_{g} \varphi = \eta_{1, \beta}' \varphi \; {\rm in} \; M, \frac{\partial \varphi}{\partial \nu} + \frac{2}{p-2} (h_{g} + \beta) \varphi = 0 \; {\rm on} \; \partial M
\end{equation}
with some positive function $ \varphi \in \calC^{\infty}(\bar{M}) $.
\end{proposition}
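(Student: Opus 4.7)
The proof has two independent parts: first, the strict negativity of $\eta_{1,\beta}'$ for small $\beta$; second, the production of a smooth positive eigenfunction $\varphi$ realizing it and satisfying (\ref{HL:eqn12}). The first part is a one-line perturbation computation, and the second part is standard spectral theory for a self-adjoint Robin problem, so the main work is packaged into invoking the right general results.

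\textbf{Negativity.} Let $\varphi_{0} \in \calC^{\infty}(\bar{M})$ be a positive eigenfunction associated to $\eta_{1}'$ (the unperturbed Rayleigh minimum with $\beta = 0$), normalized so that $\int_{M} \varphi_{0}^{2} \dvol = 1$. Using $\varphi_{0}$ as a test function in the Rayleigh quotient defining $\eta_{1,\beta}'$ gives
\[
\eta_{1,\beta}' \;\leqslant\; \eta_{1}' + \frac{2a\beta}{p-2}\int_{\partial M} \varphi_{0}^{2}\, dS.
\]
The boundary integral is a fixed finite number depending only on $g$ and $\varphi_{0}$. Since $\eta_{1}' < 0$ by hypothesis, choosing $\beta > 0$ small enough keeps the right-hand side strictly negative, yielding $\eta_{1,\beta}' < 0$.

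\textbf{Existence of the eigenfunction.} Consider the quadratic form
\[
Q_{\beta}(u) \;=\; a\int_{M} \lvert \nabla_{g} u \rvert^{2} \dvol + \int_{M} R_{g} u^{2} \dvol + \frac{2a}{p-2}\int_{\partial M}(h_{g}+\beta)u^{2}\, dS
\]
on $H^{1}(M,g)$. Using the trace inequality $\lVert u\rVert_{\calL^{2}(\partial M)}^{2} \leqslant C\lVert u\rVert_{H^{1}(M,g)}^{2}$ together with an absorption argument ($\varepsilon$-Young applied to the boundary trace) shows that $Q_{\beta}(u) + C_{0}\lVert u\rVert_{\calL^{2}(M,g)}^{2}$ is coercive on $H^{1}(M,g)$. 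Combined with the compact embeddings $H^{1}(M,g) \hookrightarrow \calL^{2}(M,g)$ and $H^{1}(M,g) \hookrightarrow \calL^{2}(\partial M)$, the direct method of the calculus of variations produces a minimizer $\varphi \in H^{1}(M,g)$ of $Q_{\beta}(u)/\lVert u\rVert_{\calL^{2}(M,g)}^{2}$ with minimum value $\eta_{1,\beta}'$. Since $Q_{\beta}(|u|) = Q_{\beta}(u)$, we may take $\varphi \geqslant 0$. The Euler-Lagrange equation is exactly the weak form of (\ref{HL:eqn12}).

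\textbf{Regularity and positivity.} Since $\varphi$ is a weak $H^{1}$-solution of a linear elliptic equation with smooth coefficients and smooth (homogeneous Robin) boundary data, Theorem \ref{HL:thm1} applied iteratively with $q$ arbitrarily large, followed by a standard Schauder bootstrap, gives $\varphi \in \calC^{\infty}(\bar{M})$. Since $\varphi \geqslant 0$, $\varphi \not\equiv 0$, and satisfies the linear elliptic equation $-a\Delta_{g}\varphi + (R_{g}-\eta_{1,\beta}')\varphi = 0$, the strong maximum principle forces $\varphi > 0$ in $M$, and the Hopf boundary-point lemma applied in tandem with the Robin boundary condition excludes $\varphi(x_{0}) = 0$ for any $x_{0} \in \partial M$. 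The only non-routine point is the coercivity of $Q_{\beta}$, which is the standard obstacle in setting up Robin-type eigenvalue problems; it is dispatched by the trace inequality plus absorption as indicated, and everything else is bookkeeping.
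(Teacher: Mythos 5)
Your negativity argument is exactly the paper's proof: test the Rayleigh quotient for $\eta_{1,\beta}'$ with the normalized first eigenfunction of the unperturbed problem and observe that the boundary perturbation term can be made small. (The paper's displayed inequality drops the harmless factor $\frac{2a}{p-2}$ in front of $\beta\int_{\partial M}\varphi_1^2\,dS$, which you correctly retain.) The paper's proof actually stops there and treats the existence, smoothness and positivity of the Robin eigenfunction as standard and unremarked; your second and third paragraphs supply that standard direct-method/coercivity-via-trace-absorption/bootstrap/Hopf argument correctly, so you have filled in material the paper leaves implicit rather than deviating from its route.
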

\begin{proof}
Since $ \eta_{1}' < 0 $, the normalized first eigenfunction $ \varphi_{1} $, i.e. $ \int_{M} \varphi_{1}^{2} \dvol = 1 $, satisfies
\begin{equation*}
\eta_{1}' = a\int_{M} \lvert \nabla_{g} \varphi_{1} \rvert^{2} \dvol + \int_{M} R_{g} \varphi_{1}^{2} \dvol + \frac{2a}{p-2} \int_{\partial M} h_{g} \varphi_{1}^{2} dS
\end{equation*}
By characterization of $ \eta_{1, \beta}' $, we have
\begin{equation*}
\eta_{1, \beta}' \leqslant a\int_{M} \lvert \nabla_{g} \varphi_{1} \rvert^{2} \dvol + \int_{M} R_{g} \varphi_{1}^{2} \dvol + \frac{2a}{p-2} \int_{\partial M} (h_{g} + \beta) \varphi_{1}^{2} dS = \eta_{1}' + \beta \int_{\partial M} \varphi_{1}^{2} dS.
\end{equation*}
Since $ \varphi_{1} $ is fixed, it follows that $ \eta_{1, \beta}' < 0 $ if $ \beta > 0 $ is small enough. 
\end{proof}
\medskip

When $ n = 2 $, i.e. $ M $ or $ \bar{M} $ is a compact Riemann surface (possibly with boundary), all tools above are not available. We thus need a new version of the monotone iteration scheme for compact Riemann surfaces with non-empty smooth boundary. We point out that the monotone iteration scheme below works for all compact manifolds with non-empty boundary, with dimensions at least $ 2 $.
\begin{theorem}\label{HL:thm3}
Let $ (\bar{M}, g) $ be a compact manifold with non-empty smooth boundary $ \partial M $, $ n = \dim M \geqslant 2 $. Let $ q > n $ be a positive integer. Let $ F(\cdot, \cdot), G(\cdot, \cdot) : \bar{M} \times \R \rightarrow \R $ be smooth functions. Let $ \nu $ be the unit outward normal vector along $ \partial M $. Let $ \sigma $ be some nonnegative, small enough constant. If

(i) there exists two functions $ u_{+} \in \calC^{\infty}(\bar{M}) $ and $ u_{-} \in \calC^{0}(\bar{M}) \cap H^{1}(M, g) $ such that
\begin{equation}\label{HL:eqn13}
\begin{split}
-\Delta_{g} u_{+} & \geqslant F(\cdot, u) \; {\rm in} \; M, \frac{\partial u}{\partial \nu} + \sigma u \geqslant G(\cdot, u_{+}) \; {\rm on} \; \partial M; \\
-\Delta_{g} u_{-} & \leqslant F(\cdot, u) \; {\rm in} \; M, \frac{\partial u}{\partial \nu} + \sigma u \leqslant G(\cdot, u_{-}) \; {\rm on} \; \partial M,
\end{split}
\end{equation}
where the sub-solution may hold in the weak sense; and

(ii) in addition, $ \sup_{\bar{M}} \lvert G(\cdot, u_{+}) \rvert, \sup_{\bar{M}} \lvert \nabla G(\cdot, u_{+}) \rvert $ are small enough;

(iii) furthermore, $ u_{+} \geqslant u_{-} $ pointwise on $ \bar{M} $;

then there exists a smooth function $ u \in \calC^{\infty}(\bar{M}) $ with $ u_{-} \leqslant u \leqslant u_{+} $ such that
\begin{equation}\label{HL:eqn14}
-\Delta_{g} u = F(\cdot, u) \; {\rm in} \; M, \frac{\partial u}{\partial \nu} + \sigma u = G(\cdot, u) \; {\rm on} \; \partial M.
\end{equation}
\end{theorem}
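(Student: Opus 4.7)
The approach is a classical monotone iteration, modified to simultaneously linearize both the interior nonlinearity $F(\cdot, u)$ and the boundary nonlinearity $G(\cdot, u)$. Since $\bar{M}$ is compact and $u_-, u_+ \in \calC^{0}(\bar{M})$, the values $u_-(x), u_+(x)$ range over a compact interval $I \subset \R$. Because $F, G$ are smooth, $\partial_{t} F$ and $\partial_{t} G$ are bounded on $\bar{M} \times I$, so one can choose constants $K_{1}, K_{2} > 0$ large enough that the maps $t \mapsto F(x, t) + K_{1} t$ and $t \mapsto G(x, t) + K_{2} t$ are monotone non-decreasing in $t \in I$, uniformly in $x \in \bar{M}$. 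Replacing $\sigma$ by $\sigma + K_{2}$ on the boundary and adding $K_{1} u$ to the interior equation preserves solutions while rendering the nonlinear terms monotone.

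Define $v_{0} := u_{+}$, and for $k \geqslant 0$ define $v_{k+1}$ as the unique solution of the \emph{linear} Robin problem
\begin{equation*}
-\Delta_{g} v_{k+1} + K_{1} v_{k+1} = F(\cdot, v_{k}) + K_{1} v_{k} \; {\rm in} \; M, \quad \frac{\partial v_{k+1}}{\partial \nu} + (\sigma + K_{2}) v_{k+1} = G(\cdot, v_{k}) + K_{2} v_{k} \; {\rm on} \; \partial M.
\end{equation*}
Integration by parts shows the associated linear operator has trivial kernel (since $K_{1} > 0$ and $\sigma + K_{2} > 0$ give coercivity), hence Theorem \ref{HL:thm1} yields a unique $v_{k+1} \in W^{2, q}(M, g) \hookrightarrow \calC^{1, \alpha}(\bar{M})$. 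I will prove by induction on $k$ that $u_{-} \leqslant v_{k+1} \leqslant v_{k} \leqslant u_{+}$. The base case $v_{1} \leqslant v_{0} = u_{+}$ uses the super-solution inequality for $u_{+}$: setting $w = v_{1} - u_{+}$, one has $-\Delta_{g} w + K_{1} w \leqslant 0$ and $\partial_{\nu} w + (\sigma + K_{2}) w \leqslant 0$, so the weak maximum principle (testing against $w^{+}$) gives $w \leqslant 0$. The inductive step is identical, using monotonicity of the shifted nonlinearities together with the induction hypothesis $v_{k} \leqslant v_{k-1}$. The lower bound $v_{k+1} \geqslant u_{-}$ is established analogously using the sub-solution condition on $u_{-}$.

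The monotone bounded sequence $\{v_{k}\}$ converges pointwise to some $u$ with $u_{-} \leqslant u \leqslant u_{+}$. Here is where hypothesis (ii) enters: the $W^{2, q}$-estimate from Theorem \ref{HL:thm1} reads
\begin{equation*}
\lVert v_{k+1} \rVert_{W^{2, q}(M, g)} \leqslant \gamma' \left( \lVert F(\cdot, v_{k}) + K_{1} v_{k} \rVert_{\calL^{q}(M, g)} + \lVert G(\cdot, v_{k}) + K_{2} v_{k} \rVert_{W^{1, q}(M, g)} \right),
\end{equation*}
and because $u_{-} \leqslant v_{k} \leqslant u_{+}$ with $\sup \lvert G(\cdot, u_{+}) \rvert$ and $\sup \lvert \nabla G(\cdot, u_{+}) \rvert$ small, the right hand side is uniformly bounded in $k$ (the bound on the boundary $W^{1, q}$-norm propagates through the iteration thanks to the smallness hypothesis). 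Rellich compactness then yields a subsequence converging strongly in $\calC^{1, \alpha}(\bar{M})$ and weakly in $W^{2, q}(M, g)$ to $u$; by continuity of $F, G$ one can pass to the limit in the linear equations, so $u$ solves (\ref{HL:eqn14}) in the strong $W^{2, q}$ sense. Standard Schauder bootstrap then upgrades $u$ to $\calC^{\infty}(\bar{M})$.

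The main obstacle is the nonlinear Robin datum. Unlike the case of a purely nonlinear interior term, the boundary nonlinearity $G(\cdot, v_{k})$ contributes to the $W^{1, q}$-norm of the Robin data, so uniform a priori bounds along the iteration are not automatic. The smallness assumption on $\sup \lvert G(\cdot, u_{+}) \rvert$ and $\sup \lvert \nabla G(\cdot, u_{+}) \rvert$ is precisely what keeps the $W^{1, q}$-norm of the boundary data uniformly controlled, preventing blow-up when passing to the limit. A secondary subtlety is that $\sigma$ is only assumed nonnegative, so the coercivity of the linearized problem is secured by the positive shift $K_{2}$ rather than by $\sigma$ itself; this is why the shifts $K_{1}, K_{2}$ must be chosen strictly positive and not merely large.
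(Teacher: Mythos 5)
Your proposal is correct and follows essentially the same route as the paper's proof: shift both the interior and boundary nonlinearities by constants (your $K_1, K_2$ correspond to the paper's $A$ and $B-\sigma$) to enforce monotonicity, iterate the resulting linear Robin problems starting from $u_+$, derive monotone ordering via testing against $(u_- - u_k)^+$, obtain uniform $W^{2,q}$ bounds by invoking the smallness hypothesis on $G(\cdot,u_+)$ and $\nabla G(\cdot,u_+)$ to control the boundary data, and conclude by compact embedding plus bootstrap regularity.
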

\begin{remark}\label{HL:re2}
The proof of Theorem \ref{HL:thm3} is essentially the same as the proof of \cite[Thm.~2.4]{XU8}, except some minor change, e.g. here we use general smooth functions $ F $ and $ G $ but not specific Yamabe equations. We therefore will give a relatively concise proof for Theorem \ref{HL:thm3}.
\end{remark}
\begin{proof}$ \bar{M} $ is compact, so extremal values of continuous functions $ u_{+}, u_{-} $ can be achieved. Choose positive constant $ A $ and nonnegative constant $ B $ such that
\begin{equation}\label{HL:eqn15}
\begin{split}
A & \geqslant -\frac{\partial F}{\partial u}(x, u(x)), \forall x \in \bar{M}, u(x) \in [\min_{\bar{M}} u_{-}, \max_{\bar{M}} u_{+}]; \\
B & \geqslant \sigma - \frac{\partial G}{\partial u}(x, u(x)), \forall x \in \bar{M}, u(x) \in [\min_{\bar{M}} u_{-}, \max_{\bar{M}} u_{+}].
\end{split}
\end{equation}
Denote $ u_{0} = u_{+} \in \calC^{\infty}(\bar{M}) $, and consider the iteration scheme
\begin{equation}\label{HL:eqn16}
\begin{split}
-\Delta_{g} u_{k} + A u_{k} & = A u_{k - 1} + F(\cdot, u_{k - 1}) \; {\rm in} \; M, k \in \mathbb{N}, \\
\frac{\partial u_{k}}{\partial \nu} + B u_{k} & = Bu_{k - 1} - \sigma u_{k - 1} + G(\cdot, u_{k - 1}) \; {\rm on} \; \partial M, k \in \mathbb{N}.
\end{split}
\end{equation}
Since $ A > 0, B \geqslant 0 $, the operator
\begin{equation*}
\left( -\Delta_{g} + A, \frac{\partial}{\partial \nu} + B \right)
\end{equation*}
is invertible due to the standard argument. Clearly when $ k = 1 $, the first iteration step in (\ref{HL:eqn16}) gives a unique smooth solution $ u_{1} \in \calC^{\infty}(\bar{M}) $. The regularity argument is also standard.

We show that $ u_{-} \leqslant u \leqslant u_{+} $. For $ u_{-} \leqslant u $, we have to use the sub-solution in the weak sense, since $ u_{0} = u_{+} \geqslant u_{-} $, we pair (\ref{HL:eqn16}) for $ k = 1 $ with arbitrary non-negative function $ v \in \calC^{\infty}(\bar{M}) $, and subtract this with the sub-solution (adding $ Au_{-} $ and $ Bu_{-} $ on both sides of the PDE and boundary conditions respectively) in the weak sense, we have
\begin{align*}
& \int_{M} \left( A \left( u_{0} - u_{-} \right) + F\left(x, u_{0}\right) - F\left(x, u_{-}\right) \right) v \dvol \leqslant \int_{M} \left( -\Delta_{g} \left( u_{1} - u_{-} \right) + A \left( u_{1} - u_{-} \right) \right) v \dvol \\
\leqslant & \int_{\partial M} B\left( u_{1} - u_{-} \right) v dS  - \int_{\partial M} \left( B \left( u_{0} - u_{-} \right) - \sigma \left( u_{0} - u_{-} \right) + G\left(x, u_{0}\right) - G\left(x, u_{-}\right) \right) v dS  \\
& \qquad + \int_{M} A \left( u_{1} - u_{-} \right) v \dvol + \int_{M} \nabla_{g} \left( u_{1} - u_{-} \right) \cdot \nabla_{g} v \dvol.
\end{align*}
Taking $ v= w : = \max \left( u_{-} - u_{1}, 0 \right) $, and applying the mean value theorem for $ F, G $, due to the definitions of $ A, B $ in (\ref{HL:eqn15}), we observe that
\begin{equation*}
\int_{M} \lvert \nabla_{g} w \rvert^{2} + \int_{\partial M} Bw^{2} + \int_{M} A w^{2} \leqslant 0.
\end{equation*}
It follows that $ w = 0 $, therefore $ u_{-} \leqslant u_{1} $. By a very similar argument in terms of the subtraction between (\ref{HL:eqn16}) and the super-solution, we conclude that $ u_{+} \geqslant u_{1} $.

Inductively, we may assume the existence of the solutions $ u_{1}, \dotso, u_{k} $ with
\begin{equation*}
u_{-} \leqslant u_{k} \leqslant u_{k-1} \leqslant \dotso \leqslant u_{1} \leqslant u_{0}.
\end{equation*}
By the same argument in the first iteration step, we conclude the existence of $ u_{k+1} \in \calC^{\infty}(\bar{M}) $; in addition, $ u_{k+1} $ satisfies
\begin{equation*}
u_{-} \leqslant u_{k+1} \leqslant u_{k} \leqslant u_{k-1} \leqslant \dotso \leqslant u_{1} \leqslant u_{0}.
\end{equation*}
Therefore we show the existence of the sequence of solutions of (\ref{HL:eqn16}) with the monotonicity
\begin{equation}\label{HL:eqn17}
u_{-} \leqslant \dotso \leqslant u_{k+1} \leqslant u_{k} \leqslant u_{k-1} \leqslant \dotso \leqslant u_{0}, k \in \mathbb{N}.
\end{equation}

We now show the uniform boundedness of $ \lVert u_{k} \rVert_{\calC^{1, \alpha}(\bar{M})} $. Since $ q > n $, showing the uniform boundedness of $ \lVert u_{k} \rVert_{\calC^{1, \alpha}(\bar{M})} $ is equivalent to show the uniform boundedness of $ \lVert u_{k} \rVert_{W^{2, q}(M, g)} $. We have mentioned that the operator is invertible and thus the $ W^{s, q} $-type estimates (\ref{HL:eqn5}) applies. We $ L $ and the boundary condition $ c $ to be the operators here with associated constant $ \gamma' $. Mimicking the boundedness proof in \cite[Thm.~2.4]{XU8}, we should require $ \sigma $ and $ \sup_{\bar{M}} \lvert G(\cdot, u) \rvert $, and $ \sup_{\bar{M}} \lvert \nabla G(\cdot, u) \rvert $ to be small enough. Denote
\begin{equation}\label{HL:eqn18}
\begin{split}
C & = \sup_{x \in \bar{M}, u(x) \in [\min_{\bar{M}} u_{-}, \max_{\bar{M}} u_{+}]} \lvert F(x, u(x)) \rvert; \\
D_{1} & = \sup_{x \in \bar{M}, u(x) \in [\min_{\bar{M}} u_{-}, \max_{\bar{M}} u_{+}]} \left\lvert G(x, u(x)) \right\rvert; \\
D_{2} & = \sup_{x \in \bar{M}, u(x) \in [\min_{\bar{M}} u_{-}, \max_{\bar{M}} u_{+}]} \left\lvert \nabla G(x, u(x)) \right\rvert; \\
\end{split}
\end{equation}
We require that $ G(\cdot, u_{+}), D_{1}, D_{2} $ satisfies
\begin{equation}\label{HL:eqn19}
\begin{split}
& \left\lVert \left( B - \sigma \right) u_{+} + G\left(\cdot, u_{+} \right) \right\rVert_{W^{1, q}(M, g)} \leqslant 1; \\
& \left( B - \sigma \right) \cdot \gamma' \left( \left( A \max_{\bar{M}} \left( \lvert u_{+} \rvert, \lvert u_{-} \rvert \right) + C \right) \cdot\text{Vol}_{g}(M)^{\frac{1}{q}} + 1 \right) + D_{1} \cdot \text{Vol}_{g}(M)^{\frac{1}{q}} \\
& \qquad + D_{2} \cdot \gamma' \left( \left( A \max_{\bar{M}} \left( \lvert u_{+} \rvert, \lvert u_{-} \rvert \right) + C \right) \cdot \text{Vol}_{g}(M)^{\frac{1}{q}} + 1 \right) \leqslant 1.
\end{split}
\end{equation}
By (\ref{HL:eqn5}) and the first inequality in (\ref{HL:eqn19}), we observe from the PDE (\ref{HL:eqn16}) with $ k = 1 $ that
\begin{align*}
\lVert u_{1} \rVert_{W^{2, q}(M, g)} & \leqslant \gamma' \left( \lVert A u_{+} + F(\cdot, u_{+}) \rVert_{\calL^{q}(M, g)} + \left\lVert \left( B - \sigma \right) u_{+} + G\left(\cdot, u_{+} \right) \right\rVert_{W^{1, q}(M, g)} \right) \\
& \leqslant \gamma' \left( \left( A \max_{\bar{M}} \lvert u_{+} \rvert + C \right) \cdot \text{Vol}_{g}(M)^{\frac{1}{q}} + 1 \right) \\
& \leqslant \gamma' \left( \left( A \max_{\bar{M}} \left( \lvert u_{+} \rvert, \lvert u_{-} \rvert \right) + C \right) \cdot \text{Vol}_{g}(M)^{\frac{1}{q}} + 1 \right).
\end{align*}
Inductively, assume that
\begin{equation}\label{HL:eqn20}
\lVert u_{k} \rVert_{W^{2, q}(M, g)} \leqslant \gamma' \left( \left( A \max_{\bar{M}} \left( \lvert u_{+} \rvert, \lvert u_{-} \rvert \right) + C \right) \cdot \text{Vol}_{g}(M)^{\frac{1}{q}} + 1 \right).
\end{equation}
To check $ \lVert u_{k+1} \rVert_{W^{2, q}(M, g)} $, we apply the $ W^{s, q} $-type elliptic estimate with the solution of (\ref{HL:eqn16}) again, 
\begin{align*}
\lVert u_{k+1} \rVert_{W^{2, q}(M, g)} & \leqslant \gamma' \left( \lVert A u_{k} + F(\cdot, u_{k}) \rVert_{\calL^{q}(M, g)} + \left\lVert \left( B - \sigma \right) u_{k} + G\left(\cdot, u_{k} \right) \right\rVert_{W^{1, q}(M, g)} \right) \\
& \leqslant \gamma' \left( \left( A \max_{\bar{M}} \left( \lvert u_{+} \rvert, \lvert u_{-} \rvert \right) + C \right) \cdot \text{Vol}_{g}(M)^{\frac{1}{q}} \right) \\
& \qquad + \gamma' \left( (B - \sigma) \lVert u_{k} \rVert_{W^{1, q}(M, g)} + \lVert G(\cdot, u_{k}) \rVert_{\calL^{q}(M, g)} + \lVert \nabla G(\cdot, u_{k}) \rVert_{\calL^{q}(M, g)} \right) \\
& \leqslant \gamma' \left( \left( A \max_{\bar{M}} \left( \lvert u_{+} \rvert, \lvert u_{-} \rvert \right) + C \right) \cdot \text{Vol}_{g}(M)^{\frac{1}{q}} \right) \\
& \qquad + \left( \gamma' \right)^{2} (B - \sigma) \left( A \max_{\bar{M}} \left( \left( \lvert u_{+} \rvert, \lvert u_{-} \rvert \right) + C \right) \cdot \text{Vol}_{g}(M)^{\frac{1}{q}} + 1 \right) \\
& \qquad \qquad + \gamma' D_{1} \cdot \text{Vol}_{g}(M)^{\frac{1}{q}} +\left( \gamma' \right)^{2} D_{2} \left( A \max_{\bar{M}} \left( \left( \lvert u_{+} \rvert, \lvert u_{-} \rvert \right) + C \right) \cdot \text{Vol}_{g}(M)^{\frac{1}{q}} + 1 \right) \\
& \leqslant \gamma' \left( \left( A \max_{\bar{M}} \left( \lvert u_{+} \rvert, \lvert u_{-} \rvert \right) + C \right) \cdot \text{Vol}_{g}(M)^{\frac{1}{q}} + 1 \right).
\end{align*}
It turns that $ \lVert u_{k} \rVert_{W^{2, q}(M, g)} $ is uniformly bounded. The rest of the argument, in applying Arzela-Ascoli, the monotonicity of the sequence, and the elliptic regularity, is essentially the same as in \cite[Thm.~2.4]{XU8}. We omit the details here.

In conclusion, the sequence $ u_{k} $ converges classically to a smooth function $ u $ which solves (\ref{HL:eqn14}). In addition, $ u_{-} \leqslant u \leqslant u_{+} $ pointwise on $ \bar{M} $.
\end{proof}
\begin{remark}\label{HL:re3}
Theorem \ref{HL:thm2} is a special case of Theorem \ref{HL:thm3} by taking $ F(\cdot, u) = - R_{g} u + Su^{p-1} $ and $ G(\cdot, u) = -\frac{2}{p-2} h_{g} u + \frac{2}{p-2} H u^{\frac{p}{2}} $.
\end{remark}
\medskip

\section{Prescribed Scalar and Mean Curvature Functions under Pointwise Conformal Deformation When $ \eta_{1} < 0 $}
Recall the Yamabe equation with Robin condition
\begin{equation}\label{negative:eqn1}
-a\Delta_{g} u + R_{g} u = Su^{p-1} \; {\rm in} \; M, \frac{\partial u}{\partial \nu} + \frac{2}{p-2} h_{g} u = \frac{2}{p-2} H u^{\frac{p}{2}} \; {\rm on} \; \partial M.
\end{equation}
In this section, we consider the existence of the solution of (\ref{negative:eqn1}) for given functions $ S, H \in \calC^{\infty}(\bar{M}) $, provided that $ \eta_{1} < 0 $. In particular, we will discuss the following cases:
\begin{enumerate}[(i).]
\item $ S <  0 $ in $ M $, and $ H \leqslant 0 $ everywhere on $ \partial M $, $ H \not\equiv 0 $, with $ \eta_{1} < 0 $; 
\item $ S <  0 $ in $ M $, and $ H > 0 $ somewhere on $ \partial M $, with $ \eta_{1} < 0 $;
\item $ S $ changes sign in $ M $, and $ H $ is arbitrary on $ \partial M $, with $ \eta_{1} < 0 $.
\end{enumerate}
Note that the Case (ii) above covers the possibilities when $ H > 0 $ everywhere on $ \partial M $, or $ \int_{\partial M} H dS > 0 $. Note also that the case $ S < 0 $ everywhere in $ M $ and $ H = 0 $ on $ \partial M $ has been discussed in \cite{XU6}. For Case (iii), obviously we have to impose some restrictions on $ S $ and $ H $, as we shall see later; there is no free choice of $ S $ especially, due to Kazdan and Warner \cite{KW}. The first result concerns the Case (i).
\begin{theorem}\label{negative:thm1}
Let $ (\bar{M}, g) $ be a compact manifold with non-empty smooth boundary $ \partial M $, $ n = \dim \bar{M} \geqslant 3 $. Let $ S_{1} < 0 $ be any smooth function on $ \bar{M} $. Let $ H_{1} \in \calC^{\infty}(\bar{M}) $ such that $ H_{1} < 0 $ everywhere on $ \partial M $. If $ \eta_{1} < 0 $, then there exists a small enough constant $ c > 0 $ such that (\ref{negative:eqn1}) admits a positive solution $ u \in \calC^{\infty}(\bar{M}) $ with $ S = S_{1} $ and $ H = cH_{1} $. Equivalently, there exists a Yamabe metric $ \tilde{g} = u^{p-2} g $ such that $ R_{\tilde{g}} = S_{1} $ and $ h_{\tilde{g}} = cH_{1} \bigg|_{\partial M} $.
\end{theorem}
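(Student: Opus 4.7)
The plan is to verify the hypotheses of Theorem~\ref{HL:thm2} for the equation with $S = S_{1}$ and $H = cH_{1}$ by constructing a positive sub-solution and a positive super-solution. Since Theorem~\ref{HL:thm2} requires the boundary mean curvature to be a positive constant, I would first invoke the resolution of the Han--Li conjecture in \cite{XU5} to replace $g$ by a pointwise conformal metric whose mean curvature is a constant $h > 0$. Because the sign of $\eta_{1}$ is a conformal invariant and pointwise conformal deformations compose, we may assume without loss of generality that $h_{g} \equiv h > 0$ while keeping $\eta_{1} < 0$.

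For the super-solution I would take $u_{+} \equiv C$, a large positive constant. In the interior, the required inequality $R_{g} C - S_{1} C^{p-1} \geqslant 0$ reduces to $C^{p-2} \geqslant \max_{\bar{M}}(-R_{g})/\min_{\bar{M}}|S_{1}|$, which is attainable because $S_{1} < 0$ everywhere. On the boundary, $\frac{\partial C}{\partial \nu} + \frac{2}{p-2} h C = \frac{2h}{p-2}C > 0$ while $\frac{2}{p-2} c H_{1} C^{p/2} < 0$, so the super-solution inequality is satisfied with $\theta_{2} = 0$ (permitted since $cH_{1} \leqslant 0$).

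The main obstacle is the sub-solution: the natural choice $\epsilon \varphi$ built from the first eigenfunction of $\Box_{g}$ satisfies $\frac{\partial \varphi}{\partial \nu} + \frac{2}{p-2} h \varphi \equiv 0$ on $\partial M$, but the sub-solution hypothesis requires the boundary operator of $u_{-}$ to be bounded above by $\frac{2}{p-2} c H_{1} (\epsilon \varphi)^{p/2}$, which is strictly negative. To create this missing ``room'' I would apply Proposition~\ref{HL:prop3}: for a sufficiently small fixed $\beta > 0$ the perturbed eigenvalue $\eta_{1,\beta}'$ remains negative, with a positive smooth eigenfunction $\varphi_{\beta}$ satisfying the Robin condition associated with $h + \beta$. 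Setting $u_{-} := \epsilon \varphi_{\beta}$, the boundary operator of the \emph{original} problem evaluates to $-\frac{2\beta}{p-2} \epsilon \varphi_{\beta} < 0$. Choosing $\theta_{1} := -\frac{2\beta}{p-2}$, the boundary chain in \eqref{HL:eqn7} reduces to the pointwise inequality $\beta \geqslant c |H_{1}| (\epsilon \varphi_{\beta})^{p/2 - 1}$, which, since $p/2 - 1 = 2/(n-2) > 0$, is satisfied by taking $c \epsilon^{p/2-1}$ small. The interior inequality $\eta_{1,\beta}' \epsilon \varphi_{\beta} - S_{1} (\epsilon \varphi_{\beta})^{p-1} \leqslant 0$ then follows from $\eta_{1,\beta}' < 0$ and $S_{1} < 0$ once $|\eta_{1,\beta}'| \geqslant |S_{1}| (\epsilon \varphi_{\beta})^{p-2}$ pointwise, which again holds for small $\epsilon$.

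Finally, I would choose $c$ small enough that $\sup_{\bar{M}} |c H_{1}|$ meets the smallness requirement of Theorem~\ref{HL:thm2} and the boundary inequality above holds, and $\epsilon$ small enough that both the interior inequality holds and $\epsilon \max_{\bar{M}} \varphi_{\beta} \leqslant C$, so that $0 < u_{-} \leqslant u_{+}$. All hypotheses of Theorem~\ref{HL:thm2} are then satisfied, and the monotone iteration scheme produces a positive solution $u \in \calC^{\infty}(\bar{M})$ of \eqref{negative:eqn1}; the pointwise conformal metric $\tilde{g} = u^{p-2} g$ then has scalar curvature $S_{1}$ and boundary mean curvature $c H_{1}$, as claimed.
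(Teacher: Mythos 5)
Your proposal is correct and takes essentially the same route as the paper's own proof: normalize $h_{g}$ to a positive constant via the Han--Li conjecture, take a large positive constant as the super-solution, take a small multiple of the perturbed eigenfunction from Proposition~\ref{HL:prop3} as the sub-solution (exploiting the strict negativity $-\beta$ on the boundary and $\eta_{1,\beta}' < 0$ in the interior), and then invoke Theorem~\ref{HL:thm2}. Your explicit bookkeeping of $\theta_{1}$ and $\theta_{2}$ is a welcome clarification but does not change the argument.
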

\begin{proof} Due to the proof of Han-Li conjecture \cite[Theorem]{XU5}, we may assume that $ h_{g} = h > 0 $ and $ R_{g} < 0 $. Since $ \eta_{1} < 0 $, it follows that $ \eta_{1, \beta} < 0 $ with small enough positive constant $ \beta > 0 $, due to Proposition \ref{HL:prop3}. Any constant multiple of $ \varphi $ solves (\ref{HL:eqn12}). Denote $ \phi = \delta \varphi $, we choose the constant $ \delta > 0 $ small enough so that
\begin{equation*}
\eta_{1, \beta} \inf_{\bar M} \varphi \geqslant \delta^{p-2} \cdot \inf_{\bar{M}} S_{1} \cdot  \sup_{\bar{M}} \varphi^{p-1}.
\end{equation*}
This can be done since both $ \eta_{1, \beta} $ and $ S_{1} $ are negative functions. It follows that
\begin{equation*}
-a\Delta_{g} \phi + R_{g} \phi = \eta_{1, \beta} \phi \leqslant S_{1} \phi^{p-1} \; {\rm in} \; M.
\end{equation*}
Fix this $ \delta $. We check the boundary condition
\begin{equation*}
-\frac{\partial \phi}{\partial \nu} + \frac{2}{p-2} h_{g} \phi = -\beta \cdot \frac{2}{p-2} \phi \leqslant \frac{2}{p-2} \cdot \left(cH_{1} \right) \phi^{\frac{p}{2}}
\end{equation*}
for small enough positive constant $ c > 0 $. Again it works since both $ -\beta $ and $ H_{1} $ are negative. We set
\begin{equation}\label{negative:eqn2}
u_{-} : = \phi.
\end{equation}
The argument above shows that $ u_{-} $ is a sub-solution of (\ref{negative:eqn1}) with $ S = S_{1} $ and $ H = cH_{1} $ for small enough $ c $. For super-solution, we set
\begin{equation}\label{negative:eqn3}
u_{+} : = C \gg 1.
\end{equation}
When $ C $ large enough, we have
\begin{equation*}
-a\Delta_{g} u_{+} + R_{g} u_{+} = R_{g} C \geqslant S_{1} C^{p-1} \; {\rm in} \; M.
\end{equation*}
Since $ H_{1} < 0 $, it is straightforward to check that for any $ c > 0 $, we have
\begin{equation*}
-\frac{\partial u_{+}}{\partial \nu} + \frac{2}{p-2} h_{g} u_{+} \geqslant 0 > \frac{2}{p-2} \left(cH_{1} \right) u_{+}^{\frac{p}{2}}.
\end{equation*}
We can enlarge $ C $ so that $ C \geqslant \sup_{\bar{M}} u_{-} $. Lastly we shrink $ c $ if necessary since we require the smallness of the sup-norm of the prescribing mean curvature function in the proof of Theorem \ref{HL:thm2}. Since $ 0 < u_{-} \leqslant u_{+} $ and both $ u_{+} $ and $ u_{-} $ are smooth functions, we conclude by Theorem \ref{HL:thm2} that (\ref{negative:eqn1}) has a positive solution $ u \in \calC^{\infty}(\bar{M}) $ with $ S = S_{1} $ and $ H = cH_{1} $ for small enough $ c > 0 $.
\end{proof}
We now consider the Case (ii) at the beginning of this section. Actually the proof is very similar to Theorem \ref{negative:thm1} above.
\begin{theorem}\label{negative:thm2}
Let $ (\bar{M}, g) $ be a compact manifold with non-empty smooth boundary $ \partial M $, $ n = \dim \bar{M} \geqslant 3 $. Let $ S_{2} < 0 $ be any smooth function on $ \bar{M} $. Let $ H_{2} \in \calC^{\infty}(\bar{M}) $ such that $ H_{2} > 0 $ somewhere on $ \partial M $. If $ \eta_{1} < 0 $, then there exists a small enough constant $ c > 0 $ such that (\ref{negative:eqn1}) admits a positive solution $ u \in \calC^{\infty}(\bar{M}) $ with $ S = S_{2} $ and $ H = cH_{2} $. Equivalently, there exists a Yamabe metric $ \tilde{g} = u^{p-2} g $ such that $ R_{\tilde{g}} = S_{2} $ and $ h_{\tilde{g}} = cH_{2} \bigg|_{\partial M} $.
\end{theorem}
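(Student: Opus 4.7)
My plan is to mirror the strategy of the proof of Theorem \ref{negative:thm1}, with the only real adjustment occurring in the super-solution inequality on the boundary, since now $H_{2}$ is allowed to be positive somewhere. The goal is to build an ordered pair of sub- and super-solutions to (\ref{negative:eqn1}) with $S = S_{2}$ and $H = cH_{2}$ for $c$ sufficiently small, and then conclude via the monotone iteration scheme in Theorem \ref{HL:thm2}. First I would invoke the solution of the Han–Li conjecture to replace $g$ (within its conformal class) by a metric, still called $g$, for which $R_{g} < 0$ in $M$ and $h_{g} = h > 0$ is a positive constant on $\partial M$. Next, by Proposition \ref{HL:prop3}, I fix a small $\beta > 0$ with $\eta_{1,\beta}' < 0$ and a positive smooth eigenfunction $\varphi$ solving (\ref{HL:eqn12}).

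For the sub-solution, I would take $u_{-} := \delta \varphi$ with $\delta > 0$ small enough that
\begin{equation*}
\eta_{1,\beta}' \inf_{\bar M} \varphi \;\geqslant\; \delta^{p-2} \inf_{\bar M} S_{2} \cdot \sup_{\bar M} \varphi^{p-1},
\end{equation*}
which is possible because both sides are negative; this yields $-a\Delta_{g} u_{-} + R_{g} u_{-} = \eta_{1,\beta}' u_{-} \leqslant S_{2} u_{-}^{p-1}$ in $M$. On $\partial M$, by the eigenvalue equation, $\frac{\partial u_{-}}{\partial \nu} + \frac{2}{p-2} h_{g} u_{-} = -\frac{2}{p-2} \beta u_{-}$, so the role of $\theta_{1}$ is played by the strictly negative constant $-\frac{2}{p-2}\beta$. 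The needed inequality $\theta_{1} u_{-} \leqslant \frac{2}{p-2} cH_{2} u_{-}^{p/2}$ is automatic where $H_{2} \geqslant 0$; where $H_{2} < 0$, it follows by shrinking $c$ so that $c |H_{2}| (\delta \varphi)^{p/2 - 1} \leqslant \beta$ uniformly.

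For the super-solution, unlike in Theorem \ref{negative:thm1}, I cannot use any $\theta_{2} = 0$ because $H_{2} > 0$ somewhere; however a constant $u_{+} := C \gg 1$ still works provided we read off $\theta_{2} = \frac{2}{p-2} h > 0$, which is positive precisely because $h > 0$. The interior inequality $R_{g} C \geqslant S_{2} C^{p-1}$ holds for $C$ large since both $R_{g}$ and $S_{2}$ are negative and $p - 1 > 1$. The boundary inequality $\theta_{2} u_{+} \geqslant \frac{2}{p-2} cH_{2} u_{+}^{p/2}$ reduces to $h \geqslant c H_{2} C^{p/2 - 1}$ on $\partial M$, which I enforce by a further shrinking of $c$. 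Enlarging $C$ if needed so that $C \geqslant \sup_{\bar M} u_{-}$ and shrinking $c$ once more to satisfy the smallness of $\sup_{\bar M} |cH_{2}|$ required in Theorem \ref{HL:thm2}, I obtain $0 < u_{-} \leqslant u_{+}$ with the desired super/sub-solution inequalities, and Theorem \ref{HL:thm2} delivers a positive smooth $u$ solving (\ref{negative:eqn1}) with $S = S_{2}$ and $H = cH_{2}$. The metric $\tilde g = u^{p-2} g$ then has the prescribed curvatures. The main (indeed only) bookkeeping step is coordinating the finitely many upper bounds on $c$—one from the negative portion of $H_{2}$ in the sub-solution, one from the positive portion of $H_{2}$ in the super-solution, and one from the sup-norm threshold in Theorem \ref{HL:thm2}—but since all are upper bounds they can be satisfied simultaneously, and no essentially new analytic difficulty appears beyond Theorem \ref{negative:thm1}.
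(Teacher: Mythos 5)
Your proposal is correct and takes essentially the same approach as the paper: the sub-solution $u_{-} = \delta\varphi$ built from the $\beta$-perturbed eigenfunction of Proposition \ref{HL:prop3} and the large-constant super-solution $u_{+} = C$ are exactly the ingredients used in the paper's proof, which simply defers to Theorem \ref{negative:thm1} for the sub-solution and adds the one boundary shrinkage of $c$ for the super-solution. Your version is slightly more explicit in naming $\theta_{1} = -\tfrac{2}{p-2}\beta$ and $\theta_{2} = \tfrac{2}{p-2}h$ from Theorem \ref{HL:thm2} and in enumerating the finitely many compatible upper bounds on $c$, but the underlying argument is identical.
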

\begin{proof}
The choice of the sub-solution is exactly the same as in Theorem \ref{negative:thm1}. When we fix the sub-solution $ u_{-} $, we choose $ u_{+} = C \gg 1 $ with $ C \geqslant u_{-} $, also large enough so that the same argument in Theorem \ref{negative:thm1} holds. Fix this $ C $ from now on. The only difference is that since $ H_{2} > 0 $ somewhere, we may need to shrink $ c $, if necessary, so that
\begin{equation*}
\frac{\partial C}{\partial \nu} + \frac{2}{p-2} h_{g} C \geqslant \frac{2}{p-2} \cdot \sup_{\partial M} (cH_{2}) C^{\frac{p}{2}}
\end{equation*}
The rest of the argument is exactly the same as in Theorem \ref{negative:thm1}.
\end{proof}
\begin{remark}\label{negative:re1}
The method of monotone iteration scheme has its limits, as we cannot obtain the prescribed mean curvature to be $ H $, due to the technical issue, see \cite[Thm.~2.4]{XU8}.
\end{remark}
\medskip

We now discuss the Case (iii). The following argument is inspired by Kazdan and Warner \cite{KW}. When $ \eta_{1} < 0 $, Kazdan and Warner showed that the key is to get the super-solution of (\ref{negative:eqn1}), if we are not using the variational method but instead the monotone iteration scheme. Next result shows that a super-solution of (\ref{negative:eqn1}) can be converted to another relation. We point out that the following result is not specific for $ \eta_{1} < 0 $ case only.
\begin{lemma}\label{negative:lemma1}
Let $ (\bar{M}, g) $ be a compact manifold with non-empty smooth boundary $ \partial M $, $ n = \dim \bar{M} \geqslant 3 $. Let $ S, H \in \calC^{\infty}(\bar{M}) $ be given functions. Then there exists some positive function $ u \in \calC^{\infty}(\bar{M}) $ satisfying
\begin{equation}\label{negative:eqn4}
-a\Delta_{g} u + R_{g} u \geqslant S u^{p-1} \; {\rm in} \; M, \frac{\partial u}{\partial \nu} + \frac{2}{p - 2} h_{g} u \geqslant \frac{2}{p -2 } H u^{\frac{p}{2}} \; {\rm on} \; \partial M
\end{equation}
if and only if there exists some positive function $ w \in \calC^{\infty}(\bar{M}) $ satisfying
\begin{equation}\label{negative:eqn5}
-a\Delta_{g} w + (2 - p) R_{g} w + \frac{(p - 1)a}{p - 2} \cdot \frac{\lvert \nabla_{g} w \rvert^{2}}{w} \leqslant (2 - p)S \; {\rm in} \; M, \frac{\partial w}{\partial \nu} - 2 h_{g} w \leqslant -2H w^{\frac{1}{2}}.
\end{equation}
Moreover, the equality in (\ref{negative:eqn4}) holds if and only if the equality in (\ref{negative:eqn5}) holds.
\end{lemma}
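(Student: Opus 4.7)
The plan is to implement the nonlinear change of variables $w = u^{2-p}$. Since $2-p = -4/(n-2) < 0$, positivity of $u$ is equivalent to positivity of $w$, and the map is invertible via $u = w^{1/(2-p)}$. I expect this substitution to convert the Yamabe-type inequality for $u$ into exactly the $\tfrac{|\nabla_g w|^2}{w}$-type inequality displayed in (\ref{negative:eqn5}) for $w$, with the reversal of inequality direction being forced by the sign of $2-p$.

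Concretely, I would proceed as follows. First, differentiate the substitution: $\nabla_g w = (2-p) u^{1-p} \nabla_g u$, so
\begin{equation*}
\Delta_g w \;=\; (2-p) u^{1-p} \Delta_g u + (2-p)(1-p) u^{-p} |\nabla_g u|^2.
\end{equation*}
Use $u^{p-2} = w^{-1}$ to rewrite $u^{-p}|\nabla_g u|^2 = \frac{|\nabla_g w|^2}{(2-p)^2 w}$ and then solve for $u^{1-p}\Delta_g u$ in terms of $\Delta_g w$ and $\frac{|\nabla_g w|^2}{w}$. Multiplying the interior inequality of (\ref{negative:eqn4}) by the positive factor $u^{1-p}$ yields $-a u^{1-p}\Delta_g u + R_g w \geqslant S$; substituting the expression for $u^{1-p}\Delta_g u$ and then multiplying by $(2-p) < 0$ (which flips $\geqslant$ to $\leqslant$) produces, after collecting coefficients, exactly the first inequality of (\ref{negative:eqn5}).

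For the boundary, the same substitution gives $\frac{\partial w}{\partial \nu} = (2-p) u^{1-p} \frac{\partial u}{\partial \nu}$ on $\partial M$. Multiplying the boundary inequality of (\ref{negative:eqn4}) by $u^{1-p} > 0$ (replacing $u^{p/2}$ by $w^{1/2}$ via $u^{1 - p/2} = w^{1/2}$), and then multiplying through by $(2-p)$ with the corresponding flip of inequality, recovers $\frac{\partial w}{\partial \nu} - 2 h_g w \leqslant -2 H w^{1/2}$. The reverse implication is the same computation run backwards starting from $u = w^{1/(2-p)}$, since the substitution is an honest bijection between positive smooth functions. Every manipulation is either multiplication by a strictly positive function or by the nonzero constant $(2-p)$, hence equality is preserved under each step, which gives the final assertion that equality in (\ref{negative:eqn4}) corresponds to equality in (\ref{negative:eqn5}).

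The main obstacle is purely bookkeeping: tracking the sign flips induced by $(2-p) < 0$, and being careful when collecting the $u^{-p}|\nabla_g u|^2$ term into $\frac{|\nabla_g w|^2}{w}$ using $u^{p-2} = w^{-1}$. There is no analytic subtlety — no regularity issue (both $u$ and $w$ are smooth and strictly positive), no need to invoke maximum principles or eigenvalue theory — so once the algebra is arranged carefully the result is immediate, which is why the statement holds without any hypothesis on the sign of $\eta_1$.
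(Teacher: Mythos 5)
Your proposal is correct and is essentially the same as the paper's proof: both use the change of variables $w = u^{2-p}$ (with inverse $u = w^{1/(2-p)}$), differentiate to express $\Delta_g w$ in terms of $\Delta_g u$ and $|\nabla_g u|^2$, and track the sign flip from multiplying by $(2-p) < 0$. The only cosmetic difference is that the paper rewrites the boundary condition by substituting $u = w^{1/(2-p)}$ directly while you multiply through by $u^{1-p}$, but these are the same manipulation.
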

\begin{proof}
Assume that there is a positive function $ u \in \calC^{\infty}(M) $ that satisfies (\ref{negative:eqn4}). Define
\begin{equation*}
w = u^{2 - p}.
\end{equation*}
Note that $ 2 - p = -\frac{4}{n - 2} < 0 $ since $ n \geqslant 3 $ by hypothesis. We compute that
\begin{equation*}
\nabla w = (2 - p) u^{1 - p} \nabla u \Leftrightarrow \nabla u = u^{p-1} (2 - p)^{-1} \nabla w,
\end{equation*}
and 
\begin{equation*}
\Delta_{g} w = (2 - p) u^{1 - p} \Delta_{g} u + (2 - p)(1 - p) u^{-p} \lvert \nabla_{g} u \rvert^{2}.
\end{equation*}
By the inequality (\ref{negative:eqn4}), we have
\begin{align*}
a\Delta_{g} w & = (2 - p) u^{1 - p} \left( a\Delta_{g} u \right) +a (2 - p) (1 - p) u^{-p} \lvert \nabla_{g} u \rvert^{2} \\
& \geqslant (p - 2) u^{1-p} \left( - R_{g} u + S u^{p-1} \right)+ a(2 - p) (1 - p) (2 - p)^{-2} u^{2p - 2} u^{-p} \lvert \nabla_{g} v \rvert^{2} \\
& = (p - 2) S + (2 - p) R_{g} u^{1 - p} +  \frac{a(p - 1)}{p - 2} u^{p -2} \lvert \nabla_{g} v \rvert^{2} \\
& = (p - 2) S + (2 - p) R_{g} w + \frac{a(p - 1)}{p - 2} \frac{ \lvert \nabla_{g} w \rvert^{2}}{w}.
\end{align*}
Shifting $ (p -2) S $ to the left side and $ a\Delta_{g} w $ to the right side, we get the first part of the inequality (\ref{negative:eqn5}). For the boundary condition, recall that $ u = w^{\frac{1}{2 - p}} $ and $ p = \frac{2n}{n - 2} $, it follows that
\begin{align*}
& \frac{\partial u}{\partial \nu} + \frac{2}{p -2 } H u^{\frac{p}{2}} \geqslant  \frac{2}{p-2} h_{g} u \Leftrightarrow \frac{1}{2 - p} w^{\frac{1}{2 - p} - 1} \frac{\partial w}{\partial \nu} + \frac{2}{p - 2} h_{g} w^{\frac{1}{2 - p}} \geqslant \frac{2}{p - 2} H w^{\frac{p}{2(2 - p)}} \\
\Leftrightarrow & -\frac{n - 2}{4} w^{-\frac{n}{4} - \frac{1}{2}} \frac{\partial w}{\partial \nu} + \frac{n - 2}{2} h_{g} w^{-\frac{n}{4} + \frac{1}{2}} \geqslant \frac{n-2}{2} H w^{-\frac{n}{4}} \\
\Leftrightarrow & \frac{\partial w}{\partial \nu} - 2h_{g} w \leqslant -2H w^{\frac{1}{2}}.
\end{align*}
Hence the second part of (\ref{negative:eqn5}) holds. It is clear that the equality holds if an only if all inequalities above are equalities.

If we assume (\ref{negative:eqn5}) for some $ w $, we just define $ u = w^{\frac{1}{2 - p}} $. The argument is very similar and we omit the details.
\end{proof}
We now introduce the result of prescribing scalar and mean curvature functions for Case (iii), with a technical restriction very similar to the condition given by Kazdan and Warner. This technical condition, in principle, is to show the positivity of the function that satisfies (\ref{negative:eqn5}). Due to the Han-Li conjecture \cite[Theorem]{XU5}, we may assume that the initial metric $ g $ has $ R_{g} = \lambda < 0 $ and $ h_{g} = \zeta > 0 $, since $ \eta_{1} < 0 $. Before we start with the special case, recall that if there exists a constant $ q > n $, and some function $ u \in \calC^{\infty}(\bar{M}) $ satisfies
\begin{equation*}
\lVert u \rVert_{W^{2, q}(M, g)} \leqslant \gamma' \left( \lVert F_{1} \rVert_{\calL^{q}(M, g)} + \lVert F_{2} \rVert_{W^{1, q}(M, g)} \right)
\end{equation*}
for some functions $ F_{1} \in \calL^{q}(M, g) $ and $ F_{2} \in W^{1, q}(M, g) $, the H\"older estimates implies that
\begin{equation}\label{negative:eqns1}
\lVert u \rVert_{\calL^{\infty}(\bar{M})} + \lVert \nabla u \rVert_{\calL^{\infty}(\bar{M})} \leqslant \gamma \left( \lVert F_{1} \rVert_{\calL^{q}(M, g)} + \lVert F_{2} \rVert_{W^{1, q}(M, g)} \right).
\end{equation}
This inequality is due to the Sobolev embedding in \cite[\S2]{Aubin}.
\begin{theorem}\label{negative:thm3}
Let $ (\bar{M}, g) $ be a compact manifold with non-empty smooth boundary $ \partial M $, $ n = \dim \bar{M} \geqslant 3 $. Assume that $ \eta_{1} < 0 $, $ R_{g} = \lambda < 0 $ and $ h_{g} = \zeta > 0 $ for some constants $ \lambda $ and $ \zeta $. Let $ S_{3}, H_{3} \in \calC^{\infty}(\bar{M}) $ and $ q > n $ be a positive integer. Let $ \gamma $ be the constant in the estimate (\ref{negative:eqns1}). Set $ D = \frac{(p - 1)a}{ p - 2} $. If there exists a function $ F \in \calC^{\infty}(\bar{M}) $ and a positive constant $ A > 0 $, such that
\begin{equation}\label{negative:eqn6}
( 2 - p ) S_{3} \geqslant F \; {\rm on} \; \partial M, \lVert F - A \rVert_{\calL^{q}(M, g)} \leqslant \frac{A}{2\gamma \left( 1 + \left(D + 1 \right) (2 - p) \lambda \right)},
\end{equation}
then there exists a small enough constant $ c > 0 $ such that (\ref{negative:eqn1}) admits a positive solution $ u \in \calC^{\infty}(\bar{M}) $ with $ S = S_{3} $ and $ H = cH_{3} $. Equivalently, there exists a Yamabe metric $ \tilde{g} = u^{p-2} g $ such that $ R_{\tilde{g}} = S_{3} $ and $ h_{\tilde{g}} = cH_{3} \bigg|_{\partial M} $.
\end{theorem}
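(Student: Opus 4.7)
The plan is to invoke the monotone iteration scheme of Theorem~\ref{HL:thm2} with $S=S_3$ and $H=cH_3$ for a sufficiently small constant $c>0$. The task therefore splits into the construction of a positive smooth sub-solution $u_-$ and of a positive $W^{2,q}$ super-solution $u_+$ of (\ref{negative:eqn1}), together with the ordering $u_-\leqslant u_+$ on $\bar{M}$.

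For the sub-solution I would follow the template of Theorems~\ref{negative:thm1} and \ref{negative:thm2}: since $\eta_1<0$, Proposition~\ref{HL:prop3} yields $\varphi>0$ with associated eigenvalue $\eta_{1,\beta}'<0$ for small $\beta>0$. Setting $u_-:=\delta\varphi$ with $\delta>0$ small, the interior inequality $\eta_{1,\beta}'\delta\varphi\leqslant S_3(\delta\varphi)^{p-1}$ can be arranged pointwise on $\bar{M}$: it is automatic where $S_3\geqslant 0$, and where $S_3<0$ it reduces to $|\eta_{1,\beta}'|\geqslant |S_3|(\delta\varphi)^{p-2}$, valid for $\delta$ small because $p-2>0$. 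The Robin boundary inequality then follows for $c>0$ small exactly as in the earlier theorems.

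The heart of the argument is the super-solution, for which I would appeal to Lemma~\ref{negative:lemma1}: it suffices to produce a positive smooth function $w$ satisfying (\ref{negative:eqn5}), and then set $u_+:=w^{1/(2-p)}$. With $R_g=\lambda$ and $h_g=\zeta$ constant, I would solve the linear Neumann problem
\begin{equation*}
-a\Delta_g w+(2-p)\lambda\,w=F\;\text{in}\;M,\qquad\frac{\partial w}{\partial\nu}=0\;\text{on}\;\partial M,
\end{equation*}
which, because $(2-p)\lambda>0$, has trivial kernel and hence a unique $W^{2,q}$ solution by Theorem~\ref{HL:thm1}. Decomposing $w=w_0+v$ with the positive constant $w_0:=A/\bigl((2-p)\lambda\bigr)$, the perturbation $v$ solves the analogous problem with right-hand side $F-A$, and the estimate (\ref{negative:eqns1}) gives
\begin{equation*}
\lVert v\rVert_{\calL^{\infty}(\bar{M})}+\lVert\nabla v\rVert_{\calL^{\infty}(\bar{M})}\leqslant \gamma\,\lVert F-A\rVert_{\calL^{q}(M,g)}.
\end{equation*}
The quantitative hypothesis (\ref{negative:eqn6}) is calibrated precisely so that this bound simultaneously keeps $w$ close to $w_0$ (in particular $w\geqslant w_0/2>0$), controls the nonlinear term $D|\nabla_g w|^2/w$ by a multiple of $\lVert F-A\rVert_{\calL^{q}(M,g)}^{2}$, and leaves enough slack for the assumed inequality $(2-p)S_3\geqslant F$ to absorb that contribution. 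The boundary inequality $\frac{\partial w}{\partial\nu}-2\zeta w\leqslant -2cH_3 w^{1/2}$ then holds for $c>0$ small since $\zeta>0$ and $w$ is bounded below by a positive constant.

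Finally, one shrinks $\delta$ further if necessary so that $u_-\leqslant u_+$ pointwise on $\bar{M}$, and shrinks $c$ once more to meet the smallness requirement on $\sup_{\bar{M}}|H|=c\sup_{\bar{M}}|H_3|$ demanded by Theorem~\ref{HL:thm2}. That theorem then produces a positive $u\in\calC^{\infty}(\bar{M})$ solving (\ref{negative:eqn1}) with $S=S_3$ and $H=cH_3$, equivalently the metric $\tilde g=u^{p-2}g$ with $R_{\tilde g}=S_3$ and $h_{\tilde g}=cH_3|_{\partial M}$. The main obstacle I anticipate is the simultaneous bookkeeping of $w-w_0$, $|\nabla w|$, and the quadratic gradient term $|\nabla w|^2/w$ against the admissible gap encoded in (\ref{negative:eqn6}); the specific factor $1+(D+1)(2-p)\lambda$ appearing there is precisely what allows a single application of the $W^{s,q}$ estimate (\ref{HL:eqn5}) together with the Sobolev embedding (\ref{negative:eqns1}) to deliver all three bounds at once.
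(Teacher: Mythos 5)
Your plan is right in outline (sub-solution from the perturbed eigenfunction, super-solution from Lemma~\ref{negative:lemma1} via a linear auxiliary problem, then Theorem~\ref{HL:thm2}), and the sub-solution and boundary parts are fine. The gap is in the interior inequality for the super-solution, and it is fatal to the argument as you have set it up.

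You solve $-a\Delta_g w + (2-p)\lambda\,w = F$ with $\partial w/\partial\nu = 0$, so after substituting into (\ref{negative:eqn5}) you must verify
\begin{equation*}
F + D\,\frac{|\nabla_g w|^2}{w} \leqslant (2-p)S_3 \quad\text{in } M.
\end{equation*}
But (\ref{negative:eqn6}) only guarantees $F\leqslant (2-p)S_3$ with no quantitative gap, while $D|\nabla_g w|^2/w\geqslant 0$ is a genuine positive error. Wherever $F$ touches $(2-p)S_3$ and $\nabla w\neq 0$ (which nothing rules out), the inequality fails. Your remark that the constant $1+(D+1)(2-p)\lambda$ is "calibrated" to leave slack is not borne out: the calibration in (\ref{negative:eqn6}) only controls $\|F-A\|_{\calL^q}$, hence $\|w-w_0\|_{\calL^\infty}$ and $\|\nabla w\|_{\calL^\infty}$; it cannot manufacture a pointwise gap $(2-p)S_3-F$ that was never assumed. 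The paper avoids this by solving the auxiliary problem with right-hand side $F-\delta$ (not $F$) and boundary datum $\delta'<0$, where
\begin{equation*}
\delta := \frac{A}{1+(D+1)(2-p)\lambda}, \qquad \delta' := -\frac{\delta}{2\gamma\,\mathrm{Vol}_g(M)^{1/q}},
\end{equation*}
and then sets $w_0 := w-(D+1)\delta$, which by the choice of $\delta$ satisfies the equation with right-hand side exactly $F-A$. The $W^{2,q}$ estimate and (\ref{negative:eqns1}) then give $\|w_0\|_{\calC^1}\leqslant\delta$, hence $D\delta\leqslant w\leqslant (D+2)\delta$, $|\nabla w|\leqslant\delta$, and therefore $D|\nabla w|^2/w\leqslant\delta$. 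The crucial point is that the equation for $w$ reads $-a\Delta_g w+(2-p)\lambda w = F-\delta$, and that built-in $-\delta$ exactly absorbs the quadratic error: $F-\delta + D|\nabla w|^2/w\leqslant F\leqslant (2-p)S_3$. Your version, setting $w_0 := A/((2-p)\lambda)$ and solving with right-hand side $F$, discards precisely that $-\delta$ of slack, and there is nothing left to absorb the gradient term.

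The fix is to adopt the paper's auxiliary data $F-\delta$ and $\delta'$, with the shift $w_0=w-(D+1)\delta$; everything else in your write-up (sub-solution, boundary inequality, ordering $u_-\leqslant u_+$, shrinking $c$) then goes through as you described.
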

\begin{proof} In this proof, we always denote $ R_{g} = \lambda $ and $ h_{g} = \zeta $. We construct the super-solution of (\ref{negative:eqn1}) first. Due to Lemma \ref{negative:lemma1}, it is equivalent to show the existence of some positive function $ w \in \calC^{\infty}(\bar{M}) $ such that (\ref{negative:eqn5}) holds for $ S = S_{3} $ and $ H = cH_{3} $ for some constant $ c $. Take
\begin{equation*}
\delta : =  \frac{A}{\left( 1 + ( D + 1 )(2 - p) \lambda \right)} > 0 .
\end{equation*}
We also choose some negative constant
\begin{equation*}
\delta' = - \frac{\delta}{2 \gamma \text{Vol}_{g}(M)^{\frac{1}{q}} } < 0.
\end{equation*}
By standard elliptic theory, there exists a unique solution $ w $ of the following PDE
\begin{equation*}
-a\Delta_{g} w + (2 - p) \lambda w = F - \delta \; {\rm in} \; M, \frac{\partial w}{\partial \nu} = \delta' \; {\rm on} \; \partial M.
\end{equation*}
The uniqueness comes from the fact that $ (2 - p) \lambda > 0 $, which implies the invertibility of the operator $ \left(-a\Delta_{g} + (2 -p) \lambda, \frac{\partial}{\partial \nu} \right) $. Clearly the constant $ (D + 1)\delta $ solves the PDE
\begin{equation*}
-a\Delta_{g} (( D + 1)\delta) + (2 - p) \lambda \cdot (( D + 1)\delta) = (2 - p) \lambda \cdot (( D + 1)\delta) \; {\rm in} \; \partial M, \frac{\partial (( D + 1)\delta)}{\partial \nu} = 0 \; {\rm on} \; \partial M.
\end{equation*}
Denote
\begin{equation*}
w_{0} : = w - ( D + 1)\delta.
\end{equation*}
The function $ w_{0} $ satisfies
\begin{equation}\label{negative:eqn7}
\begin{split}
-a\Delta_{g} w_{0} + (2 - p) \lambda w_{0} & = F - \delta - (D + 1)(2 - p) \lambda \delta = F - A \; {\rm in} \; M, \\
\frac{\partial w}{\partial \nu} & = \delta' \; {\rm on} \; \partial M.
\end{split}
\end{equation}
The first line in (\ref{negative:eqn7}) is due to the definition of $ \delta $. Since the differential operator with the boundary operator is invertible, we apply $ W^{s, q} $-type elliptic estimates (\ref{HL:eqn5}) as well as the estimates of (\ref{negative:eqns1}),
\begin{equation}\label{negative:eqn8}
\begin{split}
\lVert w_{0} \rVert_{\calL^{\infty}(\bar{M})} + \lVert \nabla w_{0} \rVert_{\calL^{\infty}(\bar{M})} & \leqslant \gamma \left( \lVert F - A \rVert_{\calL^{q}(M, g)} + \lVert \delta' \rVert_{W^{1, q}(M, g)} \right) \\
& \leqslant \gamma \left( \frac{A}{2\gamma \left( 1 + (D + 1)(2 - p) \lambda \right)} + \lvert \delta' \rvert \cdot \text{Vol}_{g}(M)^{\frac{1}{q}} \right) \\
& \leqslant \delta.
\end{split}
\end{equation}
The last inequality is due to the definitions of $ \delta $ and $ \delta' $. By definition of $ w_{0} $, the inequality (\ref{negative:eqn8}) implies
\begin{equation*}
\left\lVert w - (D + 1) \delta \right\rVert_{\calL^{\infty}(\bar{M})} \leqslant \delta, \lVert \nabla w \rVert_{\calL^{\infty}(\bar{\Omega})} \leqslant \delta.
\end{equation*}
It follows that
\begin{equation}\label{negative:eqn9}
0 < D \delta \leqslant w \leqslant (D + 2) w \; {\rm on} \; \bar{M}, \sup_{\bar{M}} \lvert \nabla w \rvert \leqslant \delta \Rightarrow \frac{(p - 1)a}{p - 2} \cdot \frac{\lvert \nabla w \rvert^{2}}{w} \leqslant \delta \; {\rm on} \; \bar{M}.
\end{equation}
With (\ref{negative:eqn6}), (\ref{negative:eqn9}), we have
\begin{align*}
& -a\Delta_{g} w + (2 - p) \lambda w + \frac{( p - 1)a}{ p -2} \cdot \frac{\lvert \nabla w \rvert^{2}}{w} = F - \delta + \frac{( p - 1)a}{ p -2} \cdot \frac{\lvert \nabla w \rvert^{2}}{w} \\
\leqslant & (2 - p) S_{3}; \\
& \frac{\partial w}{\partial \nu} - 2h_{g} w = \delta'  - 2h_{g} w \leqslant -2 \cdot \left( cH_{3} \right) w^{\frac{1}{2}}.
\end{align*}
The last inequality holds for small enough constant $ c > 0 $, regardless of the sign of $ H_{3} $ since $ \delta' - 2h_{g} w < 0 $ by set-up. By (\ref{negative:eqn9}) again, we conclude that $ w > 0 $ on $ \bar{M} $. By Lemma \ref{negative:lemma1}, the positive, smooth function
\begin{equation*}
u = w^{\frac{1}{2 - p}}
\end{equation*}
is a super-solution of (\ref{negative:eqn1}) with $ S = S_{3} $ and $ H = cH_{3} $. Note that $ u $ is still a super-solution if we make $ c $ smaller.

For sub-solution, we apply the perturbed eigenvalue problem in Proposition \ref{HL:prop3} again. There exists a small enough constant $ \beta > 0 $ such that
\begin{equation}\label{negative:eqn10}
-a\Delta_{g} \varphi + \lambda \varphi = \eta_{1, \beta} \varphi \; {\rm in} \; M, \frac{\partial \varphi}{\partial \nu} + \frac{2}{p-2} \left( \zeta + \beta \right) \varphi = 0 \; {\rm on} \; \partial M.
\end{equation}
Any scaling of $ \varphi $ solves (\ref{negative:eqn10}). Set the positive constant $ \xi \ll 1 $ such that
\begin{equation*}
\phi : = \xi \varphi \leqslant u \; {\rm on} \; \bar{M}
\end{equation*}
for the fixed super-solution $ u $ defined just above. We shrink $ \xi $ further, if necessary, such that
\begin{equation}\label{negative:eqn10}
\eta_{1, \beta} \left( \xi \varphi \right) \leqslant S_{3} \left( \xi \varphi \right)^{p-1} \; {\rm in} \; M, -\frac{2}{p-2} \cdot \beta \left( \xi \varphi \right) \leqslant \frac{2}{p-2} \cdot \left( cH_{3} \right) \cdot \left( \xi \varphi \right)^{\frac{p}{2}} \; {\rm on} \; \partial M.
\end{equation}
Note that the boundary condition holds for every $ c $, as long as we take $ \xi $ small enough. We point out that the choice of the constant $ c $ depends on the construction of the super-solution as well as the technical condition of the monotone iteration scheme, which only depends on the super-solution but not the sub-solution, see Equation (19) in \cite{XU8}. Thus we can choose $ c $ first, then determine $ \xi $. It follows that $ \phi $ is a sub-solution of (\ref{negative:eqn1}) with $ S = S_{3} $ and $ H = cH_{3} $. Furthermore, $ 0 < \phi \leqslant u $ on $ \bar{M} $. Applying Theorem \ref{HL:thm2}, we conclude that there exists a positive function $ u \in \calC^{\infty}(\bar{M}) $ as desired.
\end{proof}
The general case when $ \eta_{1} < 0 $ is a straightforward consequence of the result above.
\begin{corollary}\label{negative:cor1}
Let $ (\bar{M}, g) $ be a compact manifold with non-empty smooth boundary $ \partial M $, $ n = \dim \bar{M} \geqslant 3 $. Let $ S_{4}, H_{4} \in \calC^{\infty}(\bar{M}) $ and $ q > n $ be a positive integer. Let $ \gamma $ be the constant in the estimate (\ref{negative:eqns1}) and $ \lambda $ be some negative constant. Set $ D = \frac{(p - 1)a}{ p - 2} $. Assume that $ \eta_{1} < 0 $. If there exists a function $ F \in \calC^{\infty}(\bar{M}) $ and a positive constant $ A > 0 $, such that
\begin{equation}\label{negative:eqn6}
( 2 - p ) S_{4} \geqslant F \; {\rm on} \; \partial M, \lVert F - A \rVert_{\calL^{q}(M, g)} \leqslant \frac{A}{2\gamma \left( 1 + \left(D + 1 \right) (2 - p) \lambda \right)},
\end{equation}
then there exists a small enough constant $ c > 0 $ such that (\ref{negative:eqn1}) admits a positive solution $ u \in \calC^{\infty}(\bar{M}) $ with $ S = S_{4} $ and $ H = cH_{4} $. Equivalently, there exists a Yamabe metric $ \tilde{g} = u^{p-2} g $ such that $ R_{\tilde{g}} = S_{4} $ and $ h_{\tilde{g}} = cH_{4} \bigg|_{\partial M} $.
\end{corollary}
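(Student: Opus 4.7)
The plan is to deduce Corollary \ref{negative:cor1} from Theorem \ref{negative:thm3} by first conformally normalizing the background metric via the Han-Li conjecture and then transferring the PDE back using the conformal covariance of the conformal Laplacian together with its boundary operator. The only difference between the corollary and the theorem is that the corollary drops the hypotheses $R_g = \lambda$ and $h_g = \zeta$; every other hypothesis and the conclusion are identical, so if we can reduce to the constant-curvature normalization, we are done.

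First, because $\eta_1 < 0$, I would apply \cite[Theorem]{XU5} to obtain a positive function $\psi \in \calC^\infty(\bar{M})$ such that the conformal metric $\tilde{g} := \psi^{p-2} g$ has constant scalar curvature $R_{\tilde{g}} = \lambda < 0$ (matching the $\lambda$ in the corollary's hypothesis) and constant mean curvature $h_{\tilde{g}} = \zeta > 0$. The sign of the first eigenvalue of the conformal Laplacian with Robin boundary condition is a conformal invariant, so $\tilde{\eta}_1 < 0$ as well.

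Next, I would invoke the conformal covariance (\ref{HL:eqn11}) of the conformal Laplacian together with the analogous transformation law $B_{\tilde{g}}(v) = \psi^{-p/2} B_g(\psi v)$ for the Robin-type operator $B_g u = \partial_\nu u + \tfrac{2}{p-2} h_g u$, which follows from a direct calculation using $\nu_{\tilde{g}} = \psi^{-(p-2)/2} \nu_g$ and the conformal change formula for the mean curvature. Substituting $u = \psi v$ then shows that a positive $u \in \calC^\infty(\bar{M})$ solves (\ref{negative:eqn1}) on $(\bar{M}, g)$ with data $(S_4, cH_4)$ if and only if $v = u/\psi$ solves the identical equation on $(\bar{M}, \tilde{g})$ with the same data $(S_4, cH_4)$. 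Theorem \ref{negative:thm3} applied to $(\bar{M}, \tilde{g})$ then produces a positive $v \in \calC^\infty(\bar{M})$, whence $u := \psi v$ is the desired positive smooth solution on $(\bar{M}, g)$.

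The only subtlety worth flagging is the bookkeeping for the elliptic constant $\gamma$ appearing in (\ref{negative:eqns1}) and in hypothesis (\ref{negative:eqn6}). That constant is stated for $(M, g)$, whereas Theorem \ref{negative:thm3} needs the corresponding constant for $(M, \tilde{g})$. Since $\psi$ is a fixed positive smooth function bounded away from $0$ and $\infty$ on $\bar{M}$, the $\calL^q(M, g)$- and $\calL^q(M, \tilde{g})$-norms are uniformly equivalent, so one either reinterprets $\gamma$ as the elliptic-estimate constant for $\tilde{g}$ or enlarges it once by a universal factor depending only on $\psi$. With this cosmetic adjustment, inequality (\ref{negative:eqn6}) transfers verbatim to the normalized metric, the smallness requirement on $c$ is unaffected since $H_4$ itself is unchanged under the reduction, and the corollary follows directly from Theorem \ref{negative:thm3}.
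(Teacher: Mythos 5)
Your proposal is correct and follows the same route as the paper: apply the Han-Li conjecture to conformally normalize $g$ to a metric with $R = \lambda$ and $h = \zeta$ constants, invoke Theorem~\ref{negative:thm3} for the normalized metric, and compose the two conformal factors to obtain the metric on $(\bar M, g)$. The paper phrases this as a composition of conformal changes $\tilde g = (uv)^{p-2}g$ rather than unwinding the conformal covariance of $(\Box_g, B_g)$ at the PDE level, but these are equivalent; your remark about the elliptic constant $\gamma$ needing to be read relative to the normalized metric is a legitimate bookkeeping point that the paper leaves implicit.
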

\begin{proof}
By the result of the Han-Li conjecture \cite[Theorem]{XU5}, there exists a conformal metric $ g_{1} = v^{p-2} g $ such that $ R_{g_{1}} = \lambda $ and $ h_{g_{1}} = \zeta $. We then apply Theorem \ref{negative:thm3} for the metric $ g_{1} $, i.e. there exists $ \tilde{g} = u^{p-2} g_{1} $ with $ R_{\tilde{g}} = S_{4} $ and $ h_{\tilde{g}} = cH_{4} $ with small enough $ c > 0 $. The conformal change
\begin{equation*}
\tilde{g}= \left( uv \right)^{p-2} g
\end{equation*}
is the desired metric.
\end{proof}
\medskip

\section{Prescribed Scalar and Mean Curvature Functions for Conformal Equivalent Metrics When $ \eta_{1} < 0 $}
Inspired by the ``Trichotomy Theorem" on closed manifolds, we would like to discuss the prescribing scalar and mean curvature problem on $ (\bar{M}, g) $, $ n = \dim \bar{M} \geqslant 3 $, but not restricted in a conformal class $ [g] $ only. Instead, we are interested in the conformally equivalent metrics. 
\begin{definition}\label{ne:def1}
Let $ (\bar{M}, g) $ be a compact manifold with non-empty smooth boundary $ \partial M $, we say that a metric $ \tilde{g} $ is conformally equivalent to the metric $ g $ if there exists a positive, smooth function $ u \in \calC^{\infty}(\bar{M}) $ and a diffeomorphism $ \phi : \bar{M} \rightarrow \bar{M} $ such that
\begin{equation*}
\phi^{*} \tilde{g} = u^{p-2} g.
\end{equation*}
\end{definition}
Within in a conformal class, the prescribing scalar and mean curvature problem for given functions $ S, H \in \calC^{\infty}(\bar{M}) $ is reduced to the PDE (\ref{intro:eqn1}). For conformlaly equivalent metrics, the prescribing scalar and mean curvature problem is reduced to the existence of a positive, smooth solution of the following PDE
\begin{equation}\label{ne:eqn1}
-a\Delta_{g} u + R_{g} u = \left( S \circ \phi \right) u^{p-1} \; {\rm in} \; M, \frac{\partial u}{\partial \nu} + \frac{2}{p-2} h_{g} u = \frac{2}{p-2} \cdot \left(H \circ \phi \right) \cdot u^{\frac{p}{2}} \; {\rm on} \; \partial M.
\end{equation}
Our next result extends the result of prescribing scalar curvature problem on closed manifolds with dimensions at least $ 3 $ \cite[Thm.~3.3]{KW} to compact manifolds with non-empty smooth boundaries, provided that the first eigenvalue $ \eta_{1} $ of the conformal Laplacian with Robin boundary condition is negative. The method is essentially due to Kazdan and Warner \cite{KW2, KW}.
\begin{theorem}\label{ne:thm1}
Let $ (\bar{M}, g) $ be a compact manifold with non-empty smooth boundary $ \partial M $, $ n = \dim \bar{M} \geqslant 3 $. Let $ S_{5} $ be any smooth function on $ \bar{M} $ that is negative somewhere in $ M $. Let $ H_{5} \in \calC^{\infty}(\bar{M}) $ and $ q > n $ be a positive integer. If $ \eta_{1} < 0 $, then there exists a small enough constant $ c > 0 $ and a diffeomorphism $ \phi : \bar{M} \rightarrow \bar{M} $ such that (\ref{ne:eqn1}) admits a positive solution $ u \in \calC^{\infty}(\bar{M}) $ with $ S = S_{5} $ and $ H = c  H_{5} $. Equivalently, there exists a conformally equivalent metric $ \tilde{g} = \left( \phi^{-1} \right)^{*} \left( u^{p-2} g \right) $ such that $ R_{\tilde{g}} = S_{5} $ and $ h_{\tilde{g}} = cH_{5} \bigg|_{\partial M} $.
\end{theorem}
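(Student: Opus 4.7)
The plan is to apply the Kazdan--Warner diffeomorphism trick to reduce the ``conformally equivalent'' problem to the pointwise conformal one handled in Corollary~\ref{negative:cor1}. Since $S_5$ is negative somewhere in $M$, I first pick an interior point $x_0 \in M$ with $S_5(x_0) < 0$ and, by continuity, an open neighborhood $U_0 \subset M$ on which $S_5 < \tfrac{1}{2} S_5(x_0)$. Set $A := \tfrac{1}{4}(2-p) S_5(x_0) > 0$ (recall $2-p<0$, so $A$ is indeed positive). Next I construct a diffeomorphism $\phi : \bar M \to \bar M$ fixing $\partial M$ pointwise, together with an open set $V \subset \bar M$ of arbitrarily small volume $\epsilon$, such that $\phi(\bar M \setminus V) \subset U_0$. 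This is achieved by flowing along a smooth vector field with support compactly contained in the interior of $M$ that drags the bulk of $\bar M$ into $U_0$, an adaptation of the closed-manifold construction of \cite{KW}.

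With $\phi$ fixed, on $\bar M \setminus V$ one has $(2-p)(S_5 \circ \phi) \geq 4A$, while on $V$ it is bounded below by the constant $m := (2-p)\max_{\bar M} S_5$. I then smoothly build $F \in \calC^\infty(\bar M)$ with $F \leq (2-p)(S_5 \circ \phi)$ on $\bar M$, equal to $A$ outside a slight enlargement $V' \supset V$ and at least $m-1$ inside $V'$, interpolating smoothly in between. This yields
\[
\lVert F - A \rVert_{\calL^q(M,g)} \leq (|A| + |m| + 1)\,\mathrm{Vol}_g(V')^{1/q},
\]
which can be made smaller than the threshold $\tfrac{A}{2\gamma(1 + (D+1)(2-p)\lambda)}$ appearing in Corollary~\ref{negative:cor1} by taking $\epsilon$ small enough; note that $\gamma, \lambda, D$ depend only on $(\bar M, g, q)$ and on the choice of a fixed negative $\lambda$, and in particular are independent of $\phi$.

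Consequently the hypotheses of Corollary~\ref{negative:cor1} are fulfilled by the pair of smooth functions $(S_5 \circ \phi,\, H_5 \circ \phi)$. Its conclusion produces a small $c > 0$ and a positive $u \in \calC^\infty(\bar M)$ solving (\ref{ne:eqn1}) with $S = S_5$ and $H = c H_5$. Setting $\tilde g := (\phi^{-1})^*(u^{p-2} g)$ yields the desired metric: $\phi^* \tilde g = u^{p-2} g$ is pointwise conformal to $g$ with scalar curvature $S_5 \circ \phi$ and mean curvature $c H_5 \circ \phi$, so pullback invariance of these curvatures under the diffeomorphism $\phi$ gives $R_{\tilde g} = S_5$ on $\bar M$ and $h_{\tilde g} = c H_5$ on $\partial M$, as required by Definition~\ref{ne:def1}.

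The main obstacle I anticipate is the construction of $\phi$: the diffeomorphism must fix $\partial M$ pointwise (so that $H_5 \circ \phi$ remains a genuine smooth function on $\partial M$ that we can prescribe), while simultaneously compressing all but a set of arbitrarily small volume into $U_0 \subset \mathrm{int}(M)$. The Kazdan--Warner flow construction in the closed case does not transplant verbatim to manifolds with boundary, but since $U_0$ lies well inside the interior, an interior-supported flow suffices and no essentially new ingredient is needed. Once $\phi$ is in hand, the argument is a direct bookkeeping check against the precise threshold in Corollary~\ref{negative:cor1}.
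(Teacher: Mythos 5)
Your proof is correct and takes essentially the same approach as the paper: fix a region where $S_5<0$, construct a diffeomorphism $\phi$ compressing all but a small-volume piece of $\bar M$ into it, build the cutoff $F$, and invoke the pointwise-conformal existence result (your Corollary~\ref{negative:cor1}, equivalently Theorem~\ref{negative:thm3} after a Han--Li normalization). Two immaterial slips: with $A=\tfrac14(2-p)S_5(x_0)$ and $U_0=\{S_5<\tfrac12 S_5(x_0)\}$, the lower bound $(2-p)(S_5\circ\phi)\geqslant 4A$ on $\bar M\setminus V$ should read $>2A$ (still ample), and requiring $\phi$ to fix $\partial M$ pointwise is superfluous--any diffeomorphism of $\bar M$ already carries $\partial M$ onto $\partial M$, so $H_5\circ\phi$ is automatically smooth on $\partial M$ and the pullback identities $R_{\tilde g}=S_5$, $h_{\tilde g}=cH_5$ hold without that extra constraint.
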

\begin{proof} By Han-Li conjecture \cite[Theorem]{XU5}, we may assume that $ R_{g} = \lambda < 0 $ and $ h_{g} = \zeta > 0 $ for some constants $ \lambda, \zeta $. Fix some constant $ q > n $. Due to Theorem \ref{negative:thm3}, it suffices to find a diffeomorphism $ \phi : \bar{M} \rightarrow \bar{M} $, a smooth function $ F \in \calC^{\infty}(\bar{M}) $, a positive constant $ A > 0 $ and a small enough positive constant $ c $ such that
\begin{equation}\label{ne:eqn2}
( 2 - p) S_{5} \circ \phi \geqslant F \; {\rm on} \; \bar{M}, \lVert F - A \rVert_{\calL^{q}(M, g)} \leqslant \frac{A}{2 \gamma \left( 1 + (D + 1)(2 - p)\lambda \right) };
\end{equation}
in addition, $ \sup_{\bar{M}} c \left( H \circ \phi \right) $ is small enough. Here $ \gamma $ is the constant in the estimate (\ref{negative:eqns1}), the constant $ D $ is defined to be
\begin{equation*}
D = \frac{( p -1)a}{p - 2}.
\end{equation*}
We determine $ \phi $, $ F $ and $ A $ first. If $ S_{5} < 0 $ everywhere on $ \bar{M} $, we just choose $ \phi $ to be the identity map and set
\begin{equation*}
F = A = (2 - p) \max_{\bar{M}} S_{5}.
\end{equation*}
It is straightforward to check that (\ref{ne:eqn2}) holds.

If $ S_{5} \geqslant 0 $ somewhere and changes sign, we choose $ A $ first to be any positive constant such that
\begin{equation}\label{ne:eqn3}
0 < A  < (2 - p) \min_{\bar{M}} S_{5}.
\end{equation}
Just note that $ (2 - p) < 0 $. We pick interior open submanifolds $ U, V \subset M $ such that
\begin{equation*}
V \subset \bar{V} \subset U \subset M \subset \bar{M}.
\end{equation*}
In particular, we require that
\begin{equation}\label{ne:eqn4}
\text{Vol}_{g}(U - V) \leqslant  \left( \frac{A}{2 \gamma \left( 1 + (D + 1) ( 2- p ) \lambda \right) \cdot  \left( ( 2 - p) \lVert S_{3} \rVert_{\calL^{\infty}(\bar{M})} - A \right) } \right)^{q}.
\end{equation}
We select the diffeomorphism $ \phi $ such that
\begin{equation}\label{ne:eqn5}
( 2 - p) S_{3} \circ \phi > A \; {\rm in} \; U.
\end{equation}
We then take the function $ F $ to be
\begin{equation}\label{ne:eqn5a}
\begin{split}
& F = A \; {\rm in} \; V; \\
& ( 2- p ) \max_{\bar{M}} S_{3} \circ \phi \leqslant F \leqslant A \; {\rm in} \; U - V; \\
& F = ( 2- p ) \max_{\bar{M}} S_{3} \circ \phi \; {\rm in} \; \bar{M} - U.
\end{split}
\end{equation}
Clearly $ F \leqslant ( 2 - p) S_{3} \circ \phi $ on $ \bar{M} $ by (\ref{ne:eqn5a}). The function $ F $ only differs with $ A $ in $ U - V $, by (\ref{ne:eqn4}), it is immediate to check that the second inequality in (\ref{ne:eqn2}) holds. 

Lastly we choose $ c $ so that the condition in Theorem \ref{HL:thm2} holds for the function $ S_{3} \circ \phi $, i.e. $ c \sup_{\bar{M}} \lvert H_{5} \rvert $ is small enough. The same $ c $ applies for the smallness of $ c \sup_{\bar{M}} \lvert H_{5} \circ \phi \rvert $ since the diffeomorphism does not change the extremal values of a function. Therefore the function $ S_{3} \circ \phi $ and $ cH_{5} \circ \phi $ can be realized as prescribed scalar and mean curvature functions, respectively, for some metric $ \phi^{*} \tilde{g} = u^{p-2} g $ where $ u $ is positive and smooth on $ \bar{M} $. Equivalently, $ S_{5} $ and $ cH_{5} $ can be realized as prescribed scalar and mean curvature functions, respectively, for some metric $ \tilde{g} = \left( \phi^{-1} \right)^{*} u^{p-2} g $.
\end{proof}
\begin{remark}\label{ne:re1}
The result of Theorem \ref{ne:thm1} indicates that on $ (\bar{M}, g) $ with $ n = \dim \bar{M} \geqslant 3 $, any function that is negative somewhere can be realized as a scalar curvature function of some metric $ g $, meanwhile the mean curvature function of $ g $ can be some small enough scaling of any smooth function, provided that the manifold admits a metric with negative first eigenvalue of the conformal Laplacian, or equivalently, negative Yamabe invariant \cite[\S1]{ESC}.
\end{remark}
\medskip

\section{Prescribed Gauss and Geodesic Curvature Functions When $ \chi(\bar{M}) < 0 $}
In this section, we discuss the prescribing Gauss and geodesic curvatures problem within a conformal class $ [g] $ of compact manifolds $ (\bar{M}, g) $ with non-empty smooth boundary $ \partial M $, provided that $ \chi(\bar{M}) < 0 $ and $ n = \dim \bar{M} = 2 $. This is a $ 2 $-dimensional analogy of prescribing scalar and mean curvatures problem with $ \eta_{1} < 0 $, provided that the dimension is at least $ 3 $.

Let $ K, \sigma \in \calC^{\infty}(\bar{M}) $ be given functions. This type of Kazdan-Warner problem is reduced to the existence of a smooth solution $ u $ of the following PDE
\begin{equation}\label{ne2:eqn1}
-a\Delta_{g} u + K_{g} = K e^{2u} \; {\rm in} \; M, \frac{\partial u}{\partial \nu} + \sigma_{g} = \sigma e^{u} \; {\rm on} \; \partial M.
\end{equation}
Here $ K_{g} $ and $ \sigma_{g} $ are Gaussian and geodesic curvatures of $ g $, respectively. The solvability of this PDE implies that the metric $ \tilde{g} = e^{2u} g $ has Gauss curvature $ K_{\tilde{g}} = K $ and geodesic curvature $ \sigma_{\tilde{g}} = \sigma $. We mainly discuss to cases:
\begin{enumerate}[(i).]
\item $ K \leqslant 0 $ everywhere in $ \bar{M} $, and arbitrary $ \sigma $, with $ \chi(\bar{M}) < 0 $;
\item $ K > 0 $ somewhere in $ \bar{M} $ and changes sign, $ \sigma $ is an arbitrary function, with $ \chi(\bar{M}) < 0 $.
\end{enumerate}
We would like to apply the monotone iteration scheme to solve (\ref{ne2:eqn1}), it is equivalent to construct the sub- and super-solutions of (\ref{ne2:eqn1}). The key is to construct the super-solution. As in \S3, we convert the super-solution of (\ref{ne2:eqn1}) into another inequality involving derivatives. 
\begin{lemma}\label{ne2:lemma1}
Let $ (\bar{M}, g) $ be a compact manifold with non-empty smooth boundary $ \partial M $, $ n = \dim \bar{M} = 2 $. Let $ K, \sigma \in \calC^{\infty}(\bar{M}) $ be given functions. Then there exists some function $ u \in \calC^{\infty}(\bar{M}) $ satisfying
\begin{equation}\label{ne2:eqn2}
-\Delta_{g} u + K_{g} \geqslant K e^{2u} \; {\rm in} \; M, \frac{\partial u}{\partial \nu} + \sigma_{g} \geqslant  \sigma e^{u} \; {\rm on} \; \partial M
\end{equation}
if and only if there exists some positive function $ w \in \calC^{\infty}(\bar{M}) $ satisfying
\begin{equation}\label{ne2:eqn3}
-\Delta_{g} w - 2wK_{g} + \frac{\lvert \nabla_{g} w \rvert^{2}}{w} \leqslant -2K \; {\rm in} \; M, \frac{\partial w}{\partial \nu} - 2w \sigma_{g} \leqslant -2\sigma w^{\frac{1}{2}} \; {\rm on} \; \partial M.
\end{equation}
Moreover, the equality in (\ref{ne2:eqn2}) holds if and only if the equality in (\ref{ne2:eqn3}) holds; and the inequality in (\ref{ne2:eqn2}) is in the reverse direction if and only if the inequality in (\ref{ne2:eqn3}) is in the reverse direction.
\end{lemma}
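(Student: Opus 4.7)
The plan is to mimic the substitution used in Lemma \ref{negative:lemma1}, adjusted for the two-dimensional case where the conformal factor is $e^{2u}$ rather than $u^{p-2}$. In dimension $n=2$, the natural analogue of the change of variable $w=u^{2-p}$ is
\[
w = e^{-2u}, \qquad \text{equivalently} \qquad u = -\tfrac{1}{2}\log w.
\]
This defines a bijection between real-valued smooth functions $u$ on $\bar M$ and positive smooth functions $w$ on $\bar M$, so proving the equivalence in one direction suffices; the reverse direction is obtained by the same calculation read backwards.

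First I would compute the derivatives of $w$ in terms of $u$. From $w=e^{-2u}$ one gets $\nabla_g w = -2w\,\nabla_g u$, so $|\nabla_g u|^2 = |\nabla_g w|^2/(4w^2)$, and
\[
\Delta_g w = -2w\,\Delta_g u + 4w\,|\nabla_g u|^2 = -2w\,\Delta_g u + \frac{|\nabla_g w|^2}{w}.
\]
Solving for $-\Delta_g u$ yields $-\Delta_g u = \tfrac{1}{2w}\Delta_g w - \tfrac{1}{2w^2}|\nabla_g w|^2$, while on the boundary $\partial u/\partial\nu = -(2w)^{-1}\,\partial w/\partial\nu$. Also $e^{2u}=w^{-1}$ and $e^u = w^{-1/2}$.

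Next I would substitute these expressions into the super-solution inequality \eqref{ne2:eqn2}. The interior inequality becomes
\[
\frac{1}{2w}\Delta_g w - \frac{1}{2w^2}|\nabla_g w|^2 + K_g \geqslant \frac{K}{w}.
\]
Multiplying through by the positive quantity $2w$ and rearranging yields precisely the interior inequality in \eqref{ne2:eqn3}. On the boundary, the same substitution gives
\[
-\frac{1}{2w}\frac{\partial w}{\partial \nu} + \sigma_g \geqslant \sigma w^{-1/2},
\]
and multiplying by $2w>0$ and rearranging gives the boundary inequality in \eqref{ne2:eqn3}. Since every step is either an equivalent substitution or a multiplication by the strictly positive factor $2w$, each step preserves the direction of the inequality and becomes an equality if and only if the original step was one. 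Thus the ``moreover'' clause about equality, as well as the claim that reversal of the inequality in \eqref{ne2:eqn2} corresponds to reversal of the inequality in \eqref{ne2:eqn3}, is automatic.

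Finally, the converse direction is immediate: given a positive smooth $w$ satisfying \eqref{ne2:eqn3}, set $u=-\tfrac12\log w\in\calC^{\infty}(\bar M)$ and run the same computation in reverse. The only point to verify is the regularity and positivity of the objects involved, but since $w$ is smooth and strictly positive on the compact set $\bar M$, both $\log w$ and $w^{-1/2}$ are smooth, and no obstacle arises. There is no genuinely hard step here; the proof is essentially a bookkeeping exercise, and the only thing to be careful about is the sign bookkeeping when multiplying by $-1$ in the rearrangement, which is what produces the sign flip between $\geqslant$ in \eqref{ne2:eqn2} and $\leqslant$ in \eqref{ne2:eqn3}.
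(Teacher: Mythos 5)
Your proof is correct and takes essentially the same route as the paper: both use the substitution $w = e^{-2u}$ (equivalently $u = -\tfrac12\log w$), compute $\nabla_g w$ and $\Delta_g w$ in terms of $u$, substitute into \eqref{ne2:eqn2}, and clear denominators by multiplying through by $2w > 0$ (and then by $-1$, flipping the inequality) to obtain \eqref{ne2:eqn3}. Your observation that this substitution is the formal $n\to 2$ limit of the $w=u^{2-p}$ change of variable from Lemma~\ref{negative:lemma1} is a nice, if informal, motivating remark not spelled out in the paper.
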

\begin{proof} Assume (\ref{ne2:eqn2}) for some function $ u $ first. Define
\begin{equation*}
w : = e^{-2u}
\end{equation*}
We observe that
\begin{align*}
\nabla_{g} w & = -2 e^{-2u} \nabla_{g} u \Rightarrow \nabla_{g} u = -\frac{1}{2} e^{2u} \nabla_{g} w, \\
\Delta_{g} w & = -2e^{-2u} \Delta_{g} u + 4e^{-2u} \lvert \nabla_{g} u \rvert^{2} = -2e^{-2u} \Delta_{g} u + e^{2u} \lvert \nabla_{g} w \rvert^{2}.
\end{align*}
Thus we have
\begin{align*}
-\Delta_{g} w & = 2 e^{-2u} \Delta_{g} u - e^{2u} \lvert \nabla_{g} w \rvert^{2} \leqslant 2e^{-2u} \left( K_{g} - K e^{2u} \right) - \frac{\lvert \nabla_{g} w \rvert^{2}}{w} \\
& = 2wK_{g} - 2K - \frac{\lvert \nabla_{g} w \rvert^{2}}{w} \\
\Rightarrow & -\Delta_{g} w - 2wK_{g} + \frac{\lvert \nabla_{g} w \rvert^{2}}{w} \leqslant -2K \; {\rm in} \; M.
\end{align*}
For the boundary condition, we have
\begin{align*}
\frac{\partial w}{\partial \nu} & = \frac{\partial e^{-2u}}{\partial \nu} = -2e^{-2u} \frac{\partial u}{\partial \nu} \leqslant -2e^{-2u} \left( -\sigma_{g} + \sigma e^{u} \right) \\
& = 2w\sigma_{g} - 2\sigma w^{\frac{1}{2}} \\
\Rightarrow & \frac{\partial w}{\partial \nu} - 2w \sigma_{g} \leqslant -2\sigma w^{\frac{1}{2}} \; {\rm on} \; \partial M.
\end{align*}
Therefore (\ref{ne2:eqn3}) holds for $ w = e^{-2u} > 0 $ on $ \bar{M} $. It is clear that equality holds when all inequalities above are equalities. It is also straightforward to see that the inequalities are in the reverse directions if and only if the inequalities are in the reverse directions in each step above.

For the opposite direction, we assume (\ref{ne2:eqn3}) holds for some positive, smooth function $ w $. Define
\begin{equation*}
u = -\frac{1}{2} \log w.
\end{equation*}
We can show that $ u $ satisfies (\ref{ne2:eqn2}). The argument is quite similar to above and we omit the details.
\end{proof}
Due to the uniformization theorem, we may assume $ K_{g} = -1 $ and $ \sigma_{g} = 0 $ in (\ref{ne2:eqn1}) from now on, as our model case up to some pointwise conformal change, provided that $ \chi(\bar{M}) < 0 $. In $ 2 $-dimensional case, we also have the $ W^{s, q} $-type estimates from Theorem \ref{HL:thm1}. We choose $ q = 3 $, the estimate in (\ref{HL:eqn5}) plus the Sobolev embedding into H\"older space, the inequality in (\ref{negative:eqns1}) becomes
\begin{equation}\label{ne2:eqn4}
\lVert u \rVert_{\calL^{\infty}(\bar{M})} + \lVert \nabla u \rVert_{\calL^{\infty}(\bar{M})} \leqslant \gamma \left( \lVert F_{1} \rVert_{\calL^{3}(M, g)} + \lVert F_{2} \rVert_{W^{1, 3}(M, g)} \right).
\end{equation}
Here $ F_{1}, F_{2} $ and $ u $ comes from the PDE (\ref{HL:eqn4}) with the operators $ L = - \Delta_{g} + 2 $ and $ B = \frac{\partial}{\partial \nu} $, so is the constant $ \gamma $. Our main result of this section is the following, which covers both Case (i) and Case (ii) at the beginning of this section.
\begin{theorem}\label{ne2:thm1}
Let $ (\bar{M}, g) $ be a compact Riemann surface with non-empty smooth boundary $ \partial M $. Let $ K_{1}, \sigma_{1} \in \calC^{\infty}(\bar{M}) $ be given functions. Let $ \gamma $ be the constant in the estimate (\ref{ne2:eqn4}). Assume that $ \chi(\bar{M}) < 0 $. If there exists a function $ F \in \calC^{\infty}(\bar{M}) $ and a positive constant $ A > 0 $, such that
\begin{equation}\label{ne2:eqn5}
-2K_{1} \geqslant F \; {\rm on} \; \partial M, \lVert F - A \rVert_{\calL^{3}(M, g)} \leqslant \frac{A}{6\gamma},
\end{equation}
then there exists a small enough constant $ c > 0 $ such that (\ref{ne2:eqn1}) admits a positive solution $ u \in \calC^{\infty}(\bar{M}) $ with $ K = K_{1} $ and $ \sigma = c\sigma_{1} $. Equivalently, there exists a Yamabe metric $ \tilde{g} = e^{2u} g $ such that $ K_{\tilde{g}} = K_{1} $ and $ \sigma_{\tilde{g}} = c\sigma_{1} \bigg|_{\partial M} $.
\end{theorem}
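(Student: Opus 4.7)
The plan is to adapt the proof of Theorem \ref{negative:thm3} to the two-dimensional setting, substituting Lemma \ref{ne2:lemma1} for Lemma \ref{negative:lemma1} and Theorem \ref{HL:thm3} for Theorem \ref{HL:thm2}. Per the uniformization-based normalization already fixed in \S5, I work with $K_g = -1$ and $\sigma_g = 0$, so (\ref{ne2:eqn1}) reads $-\Delta_g u = 1 + K_1 e^{2u}$ in $M$, $\partial u/\partial \nu = c\sigma_1 e^u$ on $\partial M$.

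The heart of the argument is the construction of a super-solution. By Lemma \ref{ne2:lemma1}, it suffices to produce a positive smooth $w \in \calC^\infty(\bar M)$ with
\begin{equation*}
-\Delta_g w + 2w + \frac{|\nabla_g w|^2}{w} \leqslant -2K_1 \; \text{in} \; M, \quad \frac{\partial w}{\partial \nu} \leqslant -2c\sigma_1 w^{1/2} \; \text{on} \; \partial M.
\end{equation*}
Mirroring Theorem \ref{negative:thm3}, I solve the linear Robin problem
\begin{equation*}
-\Delta_g w + 2w = F - \delta \; \text{in} \; M, \quad \frac{\partial w}{\partial \nu} = \delta' \; \text{on} \; \partial M
\end{equation*}
with $\delta = A/5$ and $\delta' < 0$ of small magnitude. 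The operator $(-\Delta_g + 2, \partial/\partial\nu)$ is invertible by coercivity. Writing $w = 2\delta + w_0$, the remainder $w_0$ satisfies $-\Delta_g w_0 + 2w_0 = F - A$ and $\partial w_0/\partial \nu = \delta'$, and Theorem \ref{HL:thm1} with $q = 3$ combined with (\ref{ne2:eqn4}) yields $\|w_0\|_{\calL^\infty(\bar M)} + \|\nabla_g w_0\|_{\calL^\infty(\bar M)} \leqslant \gamma\bigl(\|F - A\|_{\calL^3(M,g)} + |\delta'|\,\text{Vol}_g(M)^{1/3}\bigr)$. The hypothesis $\|F - A\|_{\calL^3} \leqslant A/(6\gamma)$ caps the first term at $A/6$, and selecting $|\delta'| \leqslant A/(30\gamma\,\text{Vol}_g(M)^{1/3})$ caps the second at $A/30$, giving total bound $A/5 = \delta$. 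Thus $w \geqslant 2\delta - \delta = \delta > 0$ and $|\nabla_g w|^2/w \leqslant \delta$, and since $F \leqslant -2K_1$,
\begin{equation*}
-\Delta_g w + 2w + \frac{|\nabla_g w|^2}{w} \leqslant (F - \delta) + \delta = F \leqslant -2K_1.
\end{equation*}
For the boundary condition, $\delta'$ is a fixed negative number and $w^{1/2}$ is bounded, so shrinking $c$ gives $\delta' \leqslant -2c\sigma_1 w^{1/2}$ pointwise regardless of the sign of $\sigma_1$. Then $u_+ := -\tfrac{1}{2}\log w \in \calC^\infty(\bar M)$ is the desired super-solution.

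For the sub-solution, I take $u_- := -N + \mu f$ where $f \in \calC^\infty(\bar M)$ is a fixed smooth function with $\partial f/\partial \nu = -1$ on $\partial M$ (e.g., a smoothed boundary distance function), $N \gg 1$, and $\mu > 0$ is small. Interior: $-\Delta_g u_- = -\mu \Delta_g f$ is $O(\mu)$ while $1 + K_1 e^{2u_-} \to 1$ as $N \to \infty$, so the inequality holds for $\mu$ small and $N$ large. Boundary: $\partial u_-/\partial \nu = -\mu$, and $-\mu \leqslant c\sigma_1 e^{u_-}$ reduces to $\mu \geqslant |c\sigma_1|_{\calL^\infty} e^{-N + \mu \max_{\bar M} f}$, achievable by taking $\mu$ of exponential order $e^{-N}$; enlarging $N$ further forces $u_- \leqslant u_+$ pointwise.

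Finally, I apply Theorem \ref{HL:thm3} with $F(x, u) = 1 + K_1(x) e^{2u}$, $G(x, u) = c\sigma_1(x) e^u$, and Robin constant $0$; the smallness requirements on $\sup_{\bar M}|G|$ and $\sup_{\bar M}|\nabla G|$ reduce to smallness of $c$, already ensured in the super-solution step. This produces a smooth $u$ with $u_- \leqslant u \leqslant u_+$ solving (\ref{ne2:eqn1}), so that $\tilde g = e^{2u} g$ satisfies $K_{\tilde g} = K_1$ and $\sigma_{\tilde g} = c\sigma_1$. The main obstacle is the super-solution construction: $\delta$ and $\delta'$ must be balanced against the quantitative $\calL^3$-closeness of $F$ to the positive constant $A$ so that the linear solution $w$ is simultaneously positive, has $|\nabla_g w|^2/w$ small enough to be absorbed, and exactly consumes the slack $\delta$ built into the right-hand side of the linear PDE.
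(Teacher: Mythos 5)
Your proof reproduces the paper's argument for Theorem \ref{ne2:thm1} faithfully: reduce the super-solution to (\ref{ne2:eqn3}) via Lemma \ref{ne2:lemma1}, solve the linear Robin problem $-\Delta_g w + 2w = F - \delta$, $\partial w/\partial\nu = \delta'$, pin $w$ near a positive constant using (\ref{ne2:eqn4}), construct a sub-solution far below, and close with Theorem \ref{HL:thm3}. Two observations. First, your choice $\delta = A/5$ is the arithmetically consistent one: with $w_0 = w - 2\delta$ one computes $-\Delta_g w_0 + 2w_0 = (F-\delta) - 4\delta = F - 5\delta$, so matching $F - A$ forces $A = 5\delta$; the paper sets $\delta = A/3$ and asserts $F - 3\delta$, which would only follow from $w_0 = w - \delta$, and then the bound $\|w_0\|_{\calL^\infty} \leqslant \delta$ no longer yields the crucial strict positivity $w \geqslant \delta$. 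So your version in effect corrects a small bookkeeping slip in the paper. Second, your sub-solution $u_- = -N + \mu f$ with $\partial f/\partial\nu = -1$ is a harmless variant of the paper's, which instead solves the Neumann problem $-\Delta_g u_0 = \tfrac{1}{2}$, $\partial u_0/\partial\nu = C$ with $C<0$ fixed by the compatibility condition and then shifts by a very negative constant; both serve only to make $K_1 e^{2u_-}$ and $c\sigma_1 e^{u_-}$ uniformly negligible, so the distinction is cosmetic.
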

\begin{proof}
The proof is essentially the same as in Theorem \ref{ne:thm1}. By Lemma \ref{ne2:eqn1}, the construction of the super-solution is equivalent to the construction of a function $ w $ that satisfies (\ref{ne2:eqn3}) for $ K_{1}, \sigma_{1} $ and some small enough positive constant $ c $. We set
\begin{equation}\label{ne2:eqn6}
\delta =  \frac{A}{3}, \delta' = -\frac{\delta}{2 \gamma \text{Vol}_{g}(M)^{\frac{1}{3}}}.
\end{equation}
There is a unique solution for the PDE
\begin{equation*}
-\Delta_{g} w + 2w = F - \delta \; {\rm in} \; M, \frac{\partial w}{\partial \nu} = \delta' \; {\rm on} \; \partial M.
\end{equation*}
Define
\begin{equation*}
w_{0} = w - 2\delta,
\end{equation*}
it follows that $ w_{0} $ satisfies the PDE
\begin{equation}\label{ne2:eqn7}
-\Delta_{g} w_{0} + 2w_{0} = F - 3\delta = F - A \; {\rm in} \; M, \frac{\partial w_{0}}{\partial \nu} = \delta' \; {\rm on} \; \partial M.
\end{equation}
Apply the estimate (\ref{ne2:eqn4}) for $ w_{0} $ in (\ref{ne2:eqn7}), it follows that
\begin{equation*}
\lVert w_{0} \rVert_{\calL^{\infty}(\bar{M})} + \lVert \nabla w_{0} \rVert_{\calL^{\infty}(\bar{M})} \leqslant \gamma \left( \lVert F - A \rVert_{\calL^{3}(M, g)} + \lVert \delta' \rVert_{W^{1, 3}(M, g)} \right) \leqslant \delta.
\end{equation*}
It follows from the definition of $ w_{0} $ that
\begin{equation*}
0 < \delta \leqslant w \leqslant 3\delta \; {\rm on} \; \bar{M}, \lVert \nabla w \rVert_{\calL^{\infty}(\bar{M})} \leqslant \delta.
\end{equation*}
Therefore we conclude that
\begin{equation*}
-\Delta_{g} w + 2w + \frac{\lvert \nabla w \rvert^{2}}{w} = F - \delta + \frac{\lvert \nabla w \rvert^{2}}{w} \leqslant F \leqslant -2K_{1} \; {\rm in} \; M.
\end{equation*}
In addition, we take $ c $ small enough so that
\begin{equation*}
\frac{\partial w}{\partial \nu} = \delta' \leqslant -2c\sigma_{1} w^{\frac{1}{2}} \; {\rm on} \; \partial M.
\end{equation*}
This can be done since $ \delta' < 0 $. It follows that the function
\begin{equation*}
u_{+} : = -\frac{1}{2} \log w 
\end{equation*}
is a super-solution of (\ref{ne2:eqn1}) with $ K = K_{1} $ and $ \sigma = c\sigma_{1} $. Clearly $ u_{+} \in \calC^{\infty}(\bar{M}) $. 

We construct a sub-solution now. Consider the PDE
\begin{equation*}
-\Delta_{g} u_{0} = \frac{1}{2} \; {\rm in} \; M, \frac{\partial u_{0}}{\partial \nu} = C \; {\rm on} \; \partial M.
\end{equation*}
By standard elliptic PDE theory, see e.g. \cite[Prop.~7.7, Ch.~4]{T}, the above PDE is solvable by some smooth function $ u_{0} \in \calC^{\infty}(\bar{M}) $ if $ -\int_{M} \frac{1}{2} \dvol = \int_{\partial M} C dS_{g} $. We choose the constant $ C < 0 $ so that the compatibility condition just mentioned holds. Clearly 
\begin{equation*}
u_{-} : = u_{0} + C_{1}
\end{equation*}
solves the PDE above also for any constant $ C_{1} $. We just choose $ C_{1} $ to be very negative such that
\begin{equation*}
u_{-} \leqslant u_{+} \; {\rm on} \; \partial M;
\end{equation*}
In addition,
\begin{align*}
-\Delta_{g} u_{-} - 1 & = -\frac{1}{2} \leqslant K_{1} e^{2u_{-}} = K_{1} e^{2u_{0}} \cdot e^{2C_{1}} \; {\rm in} \; M, \\
\frac{\partial u_{-}}{\partial \nu} & = C \leqslant c\sigma_{1} e^{u_{-}} = c\sigma_{1} e^{u_{0}} \cdot e^{C_{1}} \; {\rm on} \; \partial M.
\end{align*}
These can be done since the constants on the left sides of the inequalities are both negative. Note that both the super-solution and sub-solution holds for smaller constant $ c $ by adjusting the constant $ C_{1} $ only.

Note that when $ F(\cdot, u) = K_{1} e^{2u} + 1 $, $ G(\cdot, u) = c\sigma_{1} e^{u} $, the condition (\ref{HL:eqn19}) is independent of the sub-solution $ u_{-} $ as we can see the very similar case in Theorem \ref{HL:thm2} for the Yamabe equation. Thus we take $ c $ small enough so that the hypotheses in Theorem \ref{HL:thm3} holds. It follows that there exists some smooth function $ u $ that solves (\ref{ne2:eqn1}) with $ K = K_{1} $ and $ \sigma = c\sigma_{1} $.
\end{proof}
We can partially answer the two cases we are interested in. For Case (ii), not every function that changes sign can be a prescribed scalar curvature function unless it is not too positive too often. We show that every function that is negative everywhere can be realized as a scalar curvature function, meanwhile, a small enough scaling of any function can be realized as prescribed mean curvature function, under pointwise conformal deformation. This is Case (i).
\begin{corollary}\label{ne2:cor1}
Let $ (\bar{M}, g) $ be a compact Riemann surface with non-empty smooth boundary $ \partial M $. Let $ K_{2}, \sigma_{2} \in \calC^{\infty}(\bar{M}) $ be given functions. Assume that $ K_{2} < 0 $ everywhere on $ \bar{M} $. If $ \chi(\bar{M}) < 0 $, then there exists a small enough constant $ c $, a smooth function $ u \in \calC^{\infty}(\bar{M}) $ such that $ u $ solves (\ref{ne2:eqn1}) with $ K = K_{2} $ and $ \sigma = c\sigma_{2} $. It is equivalent to say that the metric $ \tilde{g} = e^{2u} g $ has Gauss curvature $ K_{\tilde{g}} = K_{2} $ and geodesic curvature $ \sigma_{\tilde{g}} = c\sigma_{2} $.
\end{corollary}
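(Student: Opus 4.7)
The plan is to derive this corollary as a direct specialization of Theorem \ref{ne2:thm1}. The hypothesis of that theorem requires the construction of an auxiliary function $F \in \calC^{\infty}(\bar{M})$ and a positive constant $A$ satisfying $-2K_2 \geqslant F$ pointwise together with the $\calL^3$-closeness condition $\lVert F - A \rVert_{\calL^{3}(M,g)} \leqslant A/(6\gamma)$. Under the strictly stronger hypothesis that $K_2 < 0$ \emph{everywhere} on $\bar{M}$, both conditions can be met in the simplest possible way, by taking $F$ itself to be a constant.

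Specifically, since $\bar{M}$ is compact and $K_2$ is continuous and strictly negative, the quantity $A := \min_{\bar{M}}(-2K_2) = -2\max_{\bar{M}} K_2$ is strictly positive. I set $F \equiv A$ on $\bar{M}$. Then $F = A \leqslant -2K_2$ pointwise on $\bar{M}$, and in particular on $\partial M$, verifying the first part of (\ref{ne2:eqn5}). The second inequality in (\ref{ne2:eqn5}) is trivial, since $F - A \equiv 0$ gives $\lVert F - A \rVert_{\calL^{3}(M,g)} = 0$, which is bounded by $A/(6\gamma)$ regardless of the values of $\gamma$ or the volume of $(\bar{M}, g)$.

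With these choices of $F$ and $A$, and with $K_1 = K_2$, $\sigma_1 = \sigma_2$, Theorem \ref{ne2:thm1} produces a small enough constant $c > 0$ and a smooth function $u \in \calC^{\infty}(\bar{M})$ solving (\ref{ne2:eqn1}) with $K = K_2$ and $\sigma = c\sigma_2$. Equivalently, the metric $\tilde{g} = e^{2u} g$ is pointwise conformal to $g$ and has Gauss curvature $K_{\tilde{g}} = K_2$ and geodesic curvature $\sigma_{\tilde{g}} = c\sigma_2$, establishing the corollary. There is no genuine obstacle in this derivation; all of the analytic work, including the super-solution construction via Lemma \ref{ne2:lemma1} and the verification of the monotone iteration hypotheses of Theorem \ref{HL:thm3}, has already been carried out in Theorem \ref{ne2:thm1}. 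The everywhere-negativity assumption on $K_2$ simply bypasses the delicate task of balancing $F$ against $A$ on a subregion where $-2K$ might otherwise fail to dominate a positive constant.
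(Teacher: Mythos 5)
Your proof is correct and takes essentially the same route as the paper: you set $F = A = -2\max_{\bar{M}} K_2 > 0$, observe that $-2K_2 \geqslant F$ pointwise and $\lVert F - A \rVert_{\calL^3(M,g)} = 0$, and then invoke Theorem \ref{ne2:thm1}. This matches the paper's argument exactly, including the observation that the $\calL^3$-smallness condition becomes trivial under the everywhere-negativity hypothesis.
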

\begin{proof}
We show that the condition (\ref{ne2:eqn5}) holds. Since $ K_{2} < 0 $ everywhere, we just choose
\begin{equation*}
F = A = -2\max_{\bar{M}} K_{2} \Rightarrow -2K_{2} \geqslant F, \lVert F - A \rVert_{\calL^{3}(M, g)} = 0.
\end{equation*}
We just need to choose a small enough $ c $ such that the hypotheses in Theorem \ref{ne2:thm1} and Theorem \ref{HL:thm3} hold.
\end{proof}
\medskip

For Case (ii), we can get a more comprehensive answer by considering the class of conformally equivalent metrics. Analogous to \S4, we are looking for a metric $ \tilde{g} = \left( \phi^{-1} \right)^{*} e^{2u} g $ with some diffeomorphism $ \phi : \bar{M} \rightarrow \bar{M} $ and smooth function $ u \in \calC^{\infty}(\bar{M}) $ such that the scalar and mean curvatures of $ \tilde{g} $ are given functions $ K, \sigma \in \calC^{\infty}(\bar{M}) $, respectively. This problem is reduced to the PDE
\begin{equation}\label{ne2:eqn8}
-\Delta_{g} u + K_{g} = \left(K \circ \phi \right)e^{2u} \; {\rm in} \; M, \frac{\partial u}{\partial \nu} + \sigma_{g} u = \left( \sigma \circ \phi \right) e^{u} \; {\rm on} \; \partial M.
\end{equation}
Similar to Theorem \ref{ne:thm1} for dimensions at least $ 3 $, we introduce the following result for compact Riemann surfaces.
\begin{corollary}\label{ne2:cor2}
Let $ (\bar{M}, g) $ be a compact Riemann surface with non-empty smooth boundary $ \partial M $. Let $ \sigma_{3} \in \calC^{\infty}(\bar{M}) $ be any function and $ K_{3} \in \calC^{\infty}(\bar{M}) $ be a function that is negative somewhere in $ M $. If $ \chi(\bar{M}) < 0 $, then there exists a small enough constant $ c $, a smooth function $ u \in \calC^{\infty}(\bar{M}) $ and a diffeomorphism $ \phi : \bar{M} \rightarrow \bar{M} $ such that $ u $ solves (\ref{ne2:eqn8}) with $ K = K_{3} $ and $ \sigma = c\sigma_{3} $. It is equivalent to say that the metric $ \tilde{g} =\left( \phi^{-1} \right)^{*} e^{2u} g $ has Gauss curvature $ K_{\tilde{g}} = K_{3} $ and geodesic curvature $ \sigma_{\tilde{g}} = c\sigma_{3} $.
\end{corollary}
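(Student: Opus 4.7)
The plan is to imitate the proof of Theorem \ref{ne:thm1} verbatim, substituting Theorem \ref{ne2:thm1} in place of Theorem \ref{negative:thm3}. The strategy is clean: Theorem \ref{ne2:thm1} already guarantees the existence of a pointwise conformal solution whenever the prescribed Gauss curvature satisfies the integral hypothesis (\ref{ne2:eqn5}); the role of the diffeomorphism $\phi$ is to pre-compose $K_3$ so that $K_3 \circ \phi$ satisfies that hypothesis even when $K_3$ itself does not, while pre-composing $\sigma_3$ by the same $\phi$ costs nothing thanks to sup-norm invariance.

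First, by the uniformization theorem I normalize so that $K_g = -1$ and $\sigma_g = 0$, matching the setting of \S5. If $K_3 < 0$ everywhere on $\bar{M}$, I take $\phi = \mathrm{Id}$ and invoke Corollary \ref{ne2:cor1} directly. Otherwise $K_3$ changes sign in $M$, and I fix any constant $A$ with $0 < A < -2 \min_{\bar{M}} K_3$, which is admissible since $K_3$ is assumed negative somewhere. Next, I select nested open sets $V \subset \bar{V} \subset U \subset \bar{U} \subset M$ with $\mathrm{Vol}_g(U - V)$ small enough that
\begin{equation*}
\left( \mathrm{Vol}_g(U - V) \right)^{1/3} \cdot \left( A + 2 \lVert K_3 \rVert_{\calL^{\infty}(\bar{M})} \right) \leqslant \frac{A}{6\gamma}.
\end{equation*}

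The central step is to construct a diffeomorphism $\phi : \bar{M} \to \bar{M}$, compactly supported in the interior of $M$, such that $K_3 \circ \phi \leqslant -A/2$ on $U$. Because $K_3$ attains values strictly below $-A/2$ on some open subset $W \subset M$, a standard flow-of-vector-fields construction (the device used in the proof of \cite[Thm.~3.3]{KW} and recalled in Theorem \ref{ne:thm1}) produces such a $\phi$; by cutting off the generating field near $\partial M$, we may arrange $\phi$ to equal the identity on a collar neighborhood of $\partial M$. I then define a smooth function $F$ by setting $F \equiv A$ on $V$, $F \equiv -2 \max_{\bar{M}} (K_3 \circ \phi)$ on $\bar{M} - U$, and smoothly interpolating on $U - V$ so that $F \leqslant -2 (K_3 \circ \phi)$ pointwise. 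The first inequality in (\ref{ne2:eqn5}) is then immediate, while the second follows from the volume estimate above since $F - A$ is supported in $U - V$ with $\lvert F - A \rvert \leqslant A + 2 \lVert K_3 \rVert_{\calL^{\infty}(\bar{M})}$ there.

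To conclude, I apply Theorem \ref{ne2:thm1} to the pair $(K_3 \circ \phi, \, c\, \sigma_3 \circ \phi)$. Since a diffeomorphism preserves sup-norms, the smallness requirement $c \sup_{\bar{M}} \lvert \sigma_3 \circ \phi \rvert = c \sup_{\bar{M}} \lvert \sigma_3 \rvert$ from Theorem \ref{HL:thm3} can be met by shrinking $c$ after $\phi$ is fixed, exactly as in the last paragraph of the proof of Theorem \ref{ne:thm1}. This produces the desired smooth solution $u$ of (\ref{ne2:eqn8}) with $K = K_3$, $\sigma = c\sigma_3$, and hence the conformally equivalent metric $\tilde{g} = (\phi^{-1})^{*} e^{2u} g$. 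The main technical obstacle is the construction of $\phi$ in the boundary setting: one must keep the interior flow from interacting with $\partial M$, but this is harmless provided $U$ is chosen well inside $M$ and the generating vector field is cut off outside a compact neighborhood of $\bar{U}$.
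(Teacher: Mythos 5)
Your proof follows the paper's argument essentially verbatim: reduce to Theorem \ref{ne2:thm1} by constructing a diffeomorphism $\phi$ (identity near $\partial M$) pushing the negative values of $K_3$ over a large interior open set $U$, define $F$ by interpolation between $A$ and $-2\max_{\bar M}(K_3\circ\phi)$ on the thin annulus $U-V$, verify (\ref{ne2:eqn5}) via the volume bound, and shrink $c$ last using sup-norm invariance under $\phi$. Your volume condition $\left(\mathrm{Vol}_g(U-V)\right)^{1/3}\bigl(A + 2\lVert K_3\rVert_{\calL^\infty(\bar M)}\bigr) \leqslant A/(6\gamma)$ is in fact the correct form of the estimate, since $\lvert F - A\rvert \leqslant A + 2\max_{\bar M}(K_3\circ\phi)$ on $U-V$; the paper's denominator $2\lVert K_3\rVert_{\calL^\infty(\bar M)} - A$ in (\ref{ne2:eqn10}) appears to be a sign typo.
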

\begin{proof} The proof is essentially the same as in Theorem \ref{ne:thm1}. We determine $ \phi, F, A $ first so that (\ref{ne2:eqn5}) holds; then determine the constant $ c $. We may assume that $ K_{3} $ is negative somewhere but not everywhere since otherwise it is reduced to the result of Corollary \ref{ne2:cor1}.

We choose $ A $ first to be any positive constant such that
\begin{equation}\label{ne2:eqn9}
0 < A  < -2\min_{\bar{M}} K_{3}.
\end{equation}
We pick interior open submanifolds $ U, V \subset M $ such that
\begin{equation*}
V \subset \bar{V} \subset U \subset M \subset \bar{M}.
\end{equation*}
In particular, we require that
\begin{equation}\label{ne2:eqn10}
\text{Vol}_{g}(U - V) \leqslant  \left( \frac{A}{6 \gamma \cdot  \left( 2 \lVert K_{3} \rVert_{\calL^{\infty}(\bar{M})} - A \right) } \right)^{3}.
\end{equation}
We select the diffeomorphism $ \phi $ such that
\begin{equation}\label{ne2:eqn11}
-2 K_{3} \circ \phi > A \; {\rm in} \; U.
\end{equation}
We then take the function $ F $ to be
\begin{equation}\label{ne2:eqn12}
\begin{split}
& F = A \; {\rm in} \; V; \\
& -2 \max_{\bar{M}} K_{3} \circ \phi \leqslant F \leqslant A \; {\rm in} \; U - V; \\
& F = -2 \max_{\bar{M}} K_{3} \circ \phi \; {\rm in} \; \bar{M} - U.
\end{split}
\end{equation}
Clearly $ F \leqslant -2K_{3} \circ \phi $ on $ \bar{M} $ by (\ref{ne2:eqn12}). The function $ F $ only differs with $ A $ in $ U - V $, by (\ref{ne2:eqn10}), it is immediate to check that the second inequality in (\ref{ne2:eqn5}) holds. 

Lastly we choose $ c $ so that the condition in Theorem \ref{HL:thm3} holds for the function $ K_{3} \circ \phi $, i.e. $ c \sup_{\bar{M}} \lvert \sigma_{3} \rvert $ is small enough. The same $ c $ applies for the smallness of $ c \sup_{\bar{M}} \lvert \sigma_{3} \circ \phi \rvert $ since the diffeomorphism does not change the extremal values of a function. Therefore the function $ K_{3} \circ \phi $ and $ c\sigma_{3} \circ \phi $ can be realized as prescribed scalar and mean curvature functions, respectively, for some metric $ \phi^{*} \tilde{g} = u^{p-2} g $ where $ u $ is positive and smooth on $ \bar{M} $. Equivalently, $ K_{3} $ and $ c\sigma_{3} $ can be realized as prescribed scalar and mean curvature functions, respectively, for some metric $ \tilde{g} = \left( \phi^{-1} \right)^{*} u^{p-2} g $.
\end{proof}
\begin{remark}\label{ne2:re1}
The result of Corollary \ref{ne2:cor2}, combining Theorem \ref{ne:thm1} indicate that on $ (\bar{M}, g) $ with $ n = \dim \bar{M} \geqslant 2 $, any function that is negative somewhere can be realized as a scalar/Gauss curvature function of some metric $ g $, meanwhile the mean/geodesic curvature function of $ g $ can be some small enough scaling of any smooth function, provided that the manifold admits a metric with negative first eigenvalue of the conformal Laplacian, or negative Euler characteristics, respectively, depending on the dimension of the manifold. This improve the result mentioned in Remark \ref{ne:re1}.
\end{remark}
\medskip

\section{Prescribed Scalar and Mean Curvature Functions for Conformally Equivalent Metrics When $ \eta_{1} = 0 $}
In this section, we discuss the prescribing scalar and mean curvatures problem for metrics conformally equivalent to the metric $ g $ on compact manifolds $ (\bar{M}, g) $ with non-empty smooth boundary $ \partial M $, provided that $ \eta_{1} = 0 $ and $ n = \dim \bar{M} \geqslant 3 $. We gave a comprehensive study for manifolds with dimensions at least $ 3 $ in \cite{XU8} for pointwise conformal change. Here we consider whether there exists some smooth function $ u \in \calC^{\infty}(\bar{M}) $ and some diffeomorphism $ \phi : \bar{M} \rightarrow \bar{M} $ such that the metric $ \tilde{g} = \left( \phi^{-1} \right)^{*} u^{p-2} g $ has scalar curvature $ S $ and mean curvature $ H $ for some given functions $ S, H \in \calC^{\infty}(\bar{M}) $. Since the model case for zero first eigenvalue case is $ R_{g} = h_{g} = 0 $, the problem above is reduced to the existence of the solution of the following PDE
\begin{equation}\label{zero:eqn1}
-a\Delta_{g} u = \left( S \circ \phi \right) \cdot u^{p-1} \; {\rm in} \; M, \frac{\partial u}{\partial \nu} = \frac{2}{p-2} \cdot \left( H \circ \phi \right) \cdot u^{\frac{p}{2}} \; {\rm on} \; \partial M.
\end{equation}
Recall the result of prescribing scalar and mean curvature problems for conformal metrics on $ (\bar{M}, g) $.
\begin{theorem}\label{zero:thm1}\cite[Thm.~1.4]{XU8}
Let $ (\bar{M}, g) $ be a compact manifold with non-empty smooth boundary $ \partial M $, $ n = \dim \bar{M} \geqslant 3 $. Let $ S, H \in \calC^{\infty}(\bar{M}) $ be given nonzero functions. Assume that $ \eta_{1} = 0 $. If the function $ S $ satisfies
\begin{equation*}
\text{$ S $ changes sign and} \; \int_{M} S \dvol < 0,
\end{equation*}
then there exists a pointwise conformal metric $ \tilde{g} \in [g] $ that has scalar curvature $ R_{\tilde{g}} = S $ and $ h_{\tilde{g}} = cH $ for some small enough positive constant $ c $.
\end{theorem}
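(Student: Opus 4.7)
The goal is to produce a positive $u\in\calC^\infty(\bar M)$ solving
\[
-a\Delta_g u + R_g u = Su^{p-1}\ {\rm in}\ M,\qquad \frac{\partial u}{\partial \nu}+\frac{2}{p-2}h_g u = \frac{2}{p-2}(cH)u^{p/2}\ {\rm on}\ \partial M,
\]
so that $\tilde g = u^{p-2}g$ is the sought metric. My plan is to normalize the background, then apply the monotone iteration scheme (Theorem \ref{HL:thm3}), with the two hypotheses on $S$ supplying the sub- and super-solutions respectively.

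\emph{Normalization.} Because $\eta_1 = 0$, a zero-eigenvalue analog of the Han--Li construction (cf.\ \cite[Theorem]{XU5} adapted to $\eta_1=0$) furnishes a conformal representative with $R_g\equiv 0$ and $h_g\equiv 0$. In this representative, Theorem \ref{HL:thm3} applies with
\[
F(x,u) = \tfrac{1}{a}Su^{p-1},\qquad G(x,u) = \tfrac{2c}{p-2}Hu^{p/2},
\]
and the smallness of $G,\nabla G$ needed for (\ref{HL:eqn19}) is arranged by taking $c>0$ small at the very end.

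\emph{Sub-solution from sign change.} Since $S$ changes sign, pick $x_0\in M$ with $S(x_0)>0$ and a small geodesic ball $\Omega \Subset M$ around $x_0$ on which $S>0$. Apply Proposition \ref{HL:prop1} (if $g$ is not locally conformally flat), or Proposition \ref{HL:prop2} together with Remark \ref{HL:re1} (if it is), to solve the Dirichlet problem $-a\Delta_g \phi + R_g \phi = S\phi^{p-1}$ in $\Omega$ with $\phi|_{\partial\Omega}=0$; a harmless lower-order perturbation $R_g\mapsto R_g - \epsilon$ absorbs the fact that we have normalized $R_g\equiv 0$ rather than strictly negative. The trivial extension $u_- := \phi\cdot \mathbf{1}_{\Omega}$ lies in $\calC^0(\bar M)\cap H^1(M,g)$, satisfies the interior sub-solution inequality (it is zero outside $\Omega$), and since $u_-\equiv 0$ near $\partial M$ the boundary inequality in (\ref{HL:eqn13}) is trivial.

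\emph{Super-solution from $\int S<0$.} Set $\bar S := \mathrm{Vol}_g(M)^{-1}\int_M S\,\dvol<0$. The compatibility $\int_M(S-\bar S)\dvol=0$ makes
\[
-a\Delta_g \psi = S-\bar S\ {\rm in}\ M,\qquad \partial_\nu\psi = 0\ {\rm on}\ \partial M
\]
solvable; fix a smooth $\psi$ with $\int_M\psi\,\dvol=0$. Try $u_+ := A(1+t\psi)$ with $A,t>0$ small. The interior requirement becomes
\[
At(S-\bar S) \geqslant S A^{p-1}(1+t\psi)^{p-1}.
\]
The key observation is that the linear term on the left contains the strictly positive contribution $-At\bar S$, while the right side is $O(A^{p-1})$ with $p-1>1$. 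Choosing $A\ll t$ so that $A^{p-2}\sup|S|(1+t\|\psi\|_\infty)^{p-1} \leqslant -\tfrac12\bar S$, and then $t$ small enough for $(1+t\psi)$ to stay in $(\tfrac12,\tfrac32)$, absorbs the nonlinear term. The boundary inequality $\partial_\nu u_+ \geqslant \tfrac{2c}{p-2}Hu_+^{p/2}$ becomes $0 \geqslant \tfrac{2c}{p-2}H(A(1+t\psi))^{p/2}$, which obviously fails in general; to fix this I would pre-adjust the super-solution by adding a correction that solves a small inhomogeneous Robin problem $-a\Delta_g w + \beta w = 0$, $\partial_\nu w + \tfrac{2}{p-2}\beta w = K$ with $K>|H|$, which shifts $\partial_\nu u_+$ upward strictly, and then further shrink $c$ to dominate. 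Equivalently, carry out the conformal change to $h_g = \mathrm{const} > 0$ first so that Theorem \ref{HL:thm2} applies directly with its ``$\sup|H|$ small'' hypothesis.

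\emph{Main obstacle.} The serious difficulty is the super-solution: the Kazdan--Warner hypothesis $\int_M S\,\dvol < 0$ is inherently an integral (variational) condition, and turning it into a pointwise differential inequality for the critical exponent $p-1$ forces the two-parameter balancing described above. The boundary flux condition with $H$ of indeterminate sign compounds this, because $\partial_\nu\psi=0$ alone is not enough; a secondary Robin correction (or equivalently a preliminary conformal change producing $h_g = \mathrm{const}>0$) is needed before the sign of $\partial_\nu u_+$ can be controlled uniformly. Once $u_-$, $u_+$ with $u_-\leqslant u_+$ are in hand -- shrinking $\phi$ by a constant multiplier if necessary -- Theorem \ref{HL:thm3} delivers the desired smooth positive $u$, and $\tilde g = u^{p-2}g$ is the metric claimed.
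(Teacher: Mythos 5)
The paper does not supply its own proof of this statement: Theorem \ref{zero:thm1} is cited verbatim from \cite[Thm.~1.4]{XU8} and used only as a black box in the proof of Theorem \ref{zero:thm2}. So I can only assess your reconstruction on its own terms; the overall plan (a local Dirichlet solution extended by zero for $u_-$, an auxiliary Neumann function $\psi$ with $\int_M S\,\dvol<0$ for $u_+$, then Theorem \ref{HL:thm3}) is the expected one, but two concrete steps are wrong.

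First, the perturbation $R_g \mapsto R_g-\epsilon$ made to invoke Proposition \ref{HL:prop1} reverses the subsolution inequality. If $\phi>0$ solves $-a\Delta_g\phi-\epsilon\phi=S\phi^{p-1}$ in $\Omega$ with $\phi|_{\partial\Omega}=0$, then against the true background $R_g\equiv 0$ one gets $-a\Delta_g\phi=S\phi^{p-1}+\epsilon\phi>S\phi^{p-1}$ inside $\Omega$; testing the extension-by-zero against a nonnegative test function supported in the interior of $\Omega$ makes the weak subsolution condition fail. (The Hopf-lemma term on $\partial\Omega$ has the correct sign but cannot absorb an interior violation.) The correct repair is a local conformal rescaling near a point where $S>0$ so that the ambient scalar curvature is negative there, exploiting the transformation law in Remark \ref{HL:re1}, rather than a lower-order perturbation. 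Second, your fallback for the supersolution boundary inequality -- conformally normalizing to $h_g=\mathrm{const}>0$ so that Theorem \ref{HL:thm2} applies -- is incompatible with the hypothesis: if $R_g\equiv 0$ and $h_g$ is a positive constant, then the Rayleigh quotient defining $\eta_1$,
\[
\frac{a\int_M |\nabla_g u|^2\,\dvol + \int_M R_g u^2\,\dvol + \tfrac{2a}{p-2}\int_{\partial M} h_g u^2\,dS}{\int_M u^2\,\dvol},
\]
is strictly positive on every nonzero $u$, forcing $\eta_1>0$ rather than $\eta_1=0$. More generally, the hypothesis $h_g=h>0$ of Theorem \ref{HL:thm2} is never available in the $\eta_1=0$ model case, which is precisely why the paper introduces the more flexible Theorem \ref{HL:thm3}. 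So while the interior half of your supersolution ansatz $u_+=A(1+t\psi)$ can be made to close with the right balance between $A$ and $t$ (the working choice is $t\asymp A^{p-2}$, not necessarily $A\ll t$), the boundary flux inequality remains unresolved in your outline, and this is the main missing ingredient.
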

The conformally equivalent case follows from the result of Theorem \ref{zero:thm1}, we show it below. Note that the case $ S = H = 0 $ is the trivial case.
\begin{theorem}\label{zero:thm2}
Let $ (\bar{M}, g) $ be a compact manifold with non-empty smooth boundary $ \partial M $, $ n = \dim \bar{M} \geqslant 3 $. Let $ S_{6}, H_{6} \in \calC^{\infty}(\bar{M}) $ be given nonzero functions. Assume that $ \eta_{1} = 0 $. If the function $ S $ satisfies
\begin{equation*} 
\text{$ S_{6} $ changes sign},
\end{equation*}
then there exists a diffeomorphism $ \phi : \bar{M} \rightarrow \bar{M} $ and a small enough constant $ c > 0 $ such that (\ref{zero:eqn1}) has a smooth solution $ u \in \calC^{\infty}(\bar{M}) $ for $ \phi $, $ S = S_{6} $ and $ H =  cH_{6} $. It is equivalent to say that the conformally equivalent metric $ \tilde{g} = \left( \phi^{-1} \right)^{*} u^{p-2} g $ has scalar curvature $ R_{\tilde{g}} = S_{6} $ and mean curvature $ h_{\tilde{g}} = cH_{6} $.
\end{theorem}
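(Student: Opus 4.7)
The plan is to reduce Theorem \ref{zero:thm2} to Theorem \ref{zero:thm1} by means of a suitably chosen diffeomorphism $\phi : \bar{M} \rightarrow \bar{M}$, exactly in the spirit of the proof of Theorem \ref{ne:thm1}. Theorem \ref{zero:thm1} requires the prescribed scalar curvature function both to change sign \emph{and} to have negative integral, whereas the hypothesis on $S_{6}$ only gives the sign-changing property. The role of $\phi$ is therefore to twist $S_{6}$ so that its composition $S_{6} \circ \phi$ continues to change sign while having strictly negative total integral against $\dvol$.

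First, since $S_{6}$ changes sign on $\bar{M}$, I would choose $\epsilon > 0$ so that the open set $U^{-} := \{ S_{6} < -\epsilon \} \subset M$ is nonempty, and fix a nonempty open subset $K$ with smooth boundary satisfying $\bar{K} \subset U^{-}$. Next, I would construct a diffeomorphism $\phi : \bar{M} \to \bar{M}$, together with an open submanifold $U \subset M$, such that $\phi(U) \subset K$ and $\text{Vol}_{g}(\bar{M} \setminus U) \leqslant \delta$, where $\delta > 0$ is an arbitrarily prescribed small number. This is the same kind of construction used in Theorem \ref{ne:thm1}, where analogous nested open sets $V \subset \bar{V} \subset U$ with small $\text{Vol}_{g}(U-V)$ were produced: starting from the identity, one flows along a compactly supported vector field in the interior $M$ to compress $\bar{M} \setminus K$ into a thin set while stretching $K$ over almost all of $\bar{M}$. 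Because $K$ is compactly contained in the interior of $\bar{M}$, the isotopy can be chosen to fix a neighborhood of $\partial M$.

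Having constructed $\phi$, set $\tilde{S} = S_{6} \circ \phi$ and $\tilde{H} = H_{6} \circ \phi$. Since $\phi$ is a diffeomorphism and $S_{6}$ changes sign, so does $\tilde{S}$. On the large set $U$ one has $\tilde{S} \leqslant -\epsilon$, so
\begin{equation*}
\int_{M} \tilde{S} \dvol \leqslant -\epsilon \, \text{Vol}_{g}(U) + \lVert S_{6} \rVert_{\calL^{\infty}(\bar{M})} \cdot \delta,
\end{equation*}
which is strictly negative once $\delta$ is chosen small enough relative to $\epsilon$ and $\text{Vol}_{g}(\bar{M})$. Thus $\tilde{S}$ meets both hypotheses of Theorem \ref{zero:thm1}. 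Noting that $\sup_{\bar{M}} \lvert \tilde{H} \rvert = \sup_{\bar{M}} \lvert H_{6} \rvert$ because $\phi$ is a diffeomorphism, the smallness requirement on the constant $c$ appearing in Theorem \ref{zero:thm1} is insensitive to the choice of $\phi$. Applying Theorem \ref{zero:thm1} to $(\tilde{S}, \tilde{H})$, I obtain a small $c > 0$ and a positive $u \in \calC^{\infty}(\bar{M})$ solving
\begin{equation*}
-a \Delta_{g} u = \tilde{S} u^{p-1} \; {\rm in} \; M, \quad \frac{\partial u}{\partial \nu} = \frac{2}{p-2} \cdot c\tilde{H} \cdot u^{\frac{p}{2}} \; {\rm on} \; \partial M,
\end{equation*}
which is exactly (\ref{zero:eqn1}) for the diffeomorphism $\phi$, with $S = S_{6}$ and $H = c H_{6}$. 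Consequently the conformally equivalent metric $\tilde{g} := \left( \phi^{-1} \right)^{*} (u^{p-2} g)$ has $R_{\tilde{g}} = S_{6}$ and $h_{\tilde{g}} = c H_{6}$.

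The main obstacle is the diffeomorphism construction, since everything else is then mechanical. The construction itself is not new to this paper: it amounts to producing, for any nonempty open $K \subset M$ with smooth boundary and any $\delta > 0$, a compactly supported isotopy of $\bar{M}$ that pushes all but a $\delta$-volume set of $\bar{M}$ into $K$. This is standard (e.g., by combining a radial contraction in a ball chart containing $K$ with a cut-off to extend by the identity), and is precisely the technical input tacitly invoked in Theorem \ref{ne:thm1}. Once this diffeomorphism is in hand, Theorem \ref{zero:thm1} supplies the nonlinear existence statement and no further PDE analysis is required.
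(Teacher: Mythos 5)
Your proposal is correct and follows essentially the same route as the paper: reduce to Theorem \ref{zero:thm1} by producing a diffeomorphism $\phi$ with $\int_{M}(S_{6}\circ\phi)\dvol<0$, observe that the sign-changing property is preserved under composition with a diffeomorphism, and note that $\sup_{\bar M}\lvert H_{6}\circ\phi\rvert=\sup_{\bar M}\lvert H_{6}\rvert$ so the smallness condition on $c$ is unaffected by the twist. The only difference is that where the paper invokes Kazdan--Warner \cite{KW3,KW2} for the existence of such a $\phi$, you spell out the underlying isotopy argument (compress the complement of a compact set $K\subset\subset\{S_{6}<-\epsilon\}\cap M$ into a set of volume $\leqslant\delta$ while fixing a collar of $\partial M$); this is the same construction and adds welcome detail but is not a genuinely different approach.
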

\begin{proof} Due to Theorem \ref{zero:thm1}, it suffices to show that there exist a diffeomorphism $ \phi : \bar{M} \rightarrow \bar{M} $ such that
\begin{equation*}
\int_{M} \left( S_{6} \circ \phi \right) \dvol < 0.
\end{equation*}
Due to the same reason in \cite{KW3, KW2}, it is straightforward that such a diffeomorphism does exist since $ S_{6} $ changes sign. The smallness of $ c $ is then determined by $ S_{6} \circ \phi $, $ \sup_{\bar{M}} \lvert H_{6} \rvert $ as well as the choice of sub- and super-solutions in the proofs of \cite[Thm.~5.1, Cor.~5.1, Cor.~5.2]{XU8}. Note that any diffeomorphism $ \phi $ will not change the supremum of $ \lvert H_{6} \rvert $ on $ \bar{M} $.
\end{proof}
\begin{remark}\label{zero:re1}
The result of Theorem \ref{zero:thm2} indicates that on $ (\bar{M}, g) $ with $ n = \dim \bar{M} \geqslant 3 $, any function that changes sign or identically zero can be realized as a scalar curvature function of some metric $ g $, meanwhile the mean curvature function of $ g $ can be some small enough scaling of any smooth function or zero function, respectively, provided that the manifold admits a metric with zero first eigenvalue of the conformal Laplacian, or equivalently, zero Yamabe invariant \cite[\S1]{ESC}.
\end{remark}
\medskip

\section{Prescribed Scalar and Mean Curvature Functions When $ \eta_{1} > 0 $}
In this section, we seek for a positive, smooth solution of the following PDE
\begin{equation}\label{positive:eqn1}
-a\Delta_{g} u + R_{g} u = Su^{p-1} \; {\rm in} \; M, \frac{\partial u}{\partial \nu} + \frac{2}{p-2} h_{g} u = \frac{2}{p-2} H u^{\frac{p}{2}} \; {\rm on} \; \partial M.
\end{equation}
on compact manifolds $ (\bar{M}, g) $ with non-empty smooth boundary $ \partial M $, $ n = \dim \bar{M} \geqslant 3 $, for given functions $ S, H \in \calC^{\infty}(\bar{M}) $, provided that $ \eta_{1} > 0 $. As we have shown in \cite{XU4}, \cite{XU5} and \cite{XU6}, we need to use local analysis, gluing a super-solution, and then apply monotone iteration scheme here. According to the ``Trichotomy Theorem" in \cite{XU7}, we expect few restrictions on prescribed scalar and mean curvature functions. We will discuss the following case:
\begin{enumerate}[(i).]
\item $ S > 0 $ somewhere in $ M $, and $ H > 0 $ somewhere on $ \partial M $, with $ \eta_{1} > 0 $; 
\item $ S > 0 $ somewhere in $ M $, and $ H \leqslant 0 $ everywhere on $ \partial M $ but $ H \not\equiv 0 $, with $ \eta_{1} > 0 $.
\end{enumerate}
Note that we have discussed the case $ S > 0 $ somewhere and $ H \equiv 0 $ in \cite{XU6}. Currently we do not see how to apply our method to the case mentioned in \cite{ESC3},
\begin{equation}\label{positive:eqn2}
-\Delta_{e} u = 0 \; {\rm in} \; \mathbb{B}^{n}, \frac{\partial u}{\partial \nu} + \frac{2}{p-2} h_{g} u = \frac{2}{p-2} H u^{\frac{p}{2}} \; {\rm on} \; \partial \mathbb{B}^{n}, u > 0
\end{equation}
for some given function $ H $. Escobar showed that there is an obstruction for the choice of $ H $
\begin{equation*}
\int_{\partial \mathbb{B}^{n}} X \cdot \nabla_{g} H dS = 0.
\end{equation*}
Here $ X $ is some conformal Killing field on $ \partial \mathbb{B}^{n} $. With standard Euclidean metric in $ \mathbb{B}^{n} $ and the induced metric on $ \partial \mathbb{B}^{n} $, the first eigenvalue of conformal Laplacian with Robin condition is positive. However, since the right side is zero, we are not able to get a nontrivial local solution of the Dirichlet problem
\begin{equation*}
-\Delta_{e} u = 0 \; {\rm in} \; \Omega, u = 0 \; {\rm on} \; \partial \Omega.
\end{equation*}
Therefore we may need some alternative method to resolve this issue.

However, we can get some interesting results provided that $ S \not\equiv 0 $. According to the detailed analysis in \cite[\S5]{XU6}, we know that there will be obstructions for the choices of prescribed scalar curvature functions on $ \mathbb{S}^{n} \slash \Gamma $ for some Kleinian group $ \Gamma $. The map $ \mathbb{S}^{n} \rightarrow \mathbb{S}^{n} \slash \Gamma $ must be a covering map since otherwise $ \mathbb{S}^{n} \slash \Gamma $ cannnot be a manifold. It follows that $ \mathbb{S}^{n} \slash \Gamma $ has empty boundary, which follows that there will be no obstruction for the choice of prescribed scalar curvature functions on $ (\bar{M}, g) $.

The first result concerns the Case (i) above:
\begin{theorem}\label{positive:thm1}
Let $ (\bar{M}, g) $ be a compact manifold with non-empty smooth boundary $ \partial M $, $ n = \dim \bar{M} \geqslant 3 $. Let $ S_{7} > 0 $ somewhere be any smooth function on $ \bar{M} $. Let $ H_{7} \in \calC^{\infty}(\bar{M}) $ such that $ H_{7} > 0 $ somewhere on $ \partial M $. If $ \eta_{1} > 0 $, then there exists a small enough constant $ c > 0 $ such that (\ref{positive:eqn1}) admits a positive solution $ u \in \calC^{\infty}(\bar{M}) $ with $ S = S_{7} $ and $ H = cH_{7} $. Equivalently, there exists a Yamabe metric $ \tilde{g} = u^{p-2} g $ such that $ R_{\tilde{g}} = S_{7} $ and $ h_{\tilde{g}} = cH_{7} \bigg|_{\partial M} $.
\end{theorem}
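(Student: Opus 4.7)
The plan is to use the same local-analysis-plus-monotone-iteration template as in \cite{XU4, XU5, XU6}, with the extra ingredient of a Robin-boundary perturbation of the first eigenfunction to absorb the nontrivial positive part of $H_{7}$.

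First, by invoking the Han--Li-type reduction already used throughout the paper, I would assume without loss of generality that $h_{g} = h > 0 $ is a positive constant on $ \partial M $; this does not disturb the hypothesis $ \eta_{1} > 0 $. Next, in the spirit of Proposition \ref{HL:prop3} but with the opposite sign of the perturbation, I would consider the eigenvalue problem
\begin{equation*}
-a\Delta_{g} \tilde{\varphi} + R_{g}\tilde{\varphi} = \tilde{\eta}_{1}\tilde{\varphi} \; {\rm in} \; M, \quad \frac{\partial \tilde{\varphi}}{\partial \nu} + \frac{2}{p - 2}(h - \beta)\tilde{\varphi} = 0 \; {\rm on} \; \partial M,
\end{equation*}
for a small $ \beta > 0 $. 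By the same variational perturbation argument as in Proposition \ref{HL:prop3}, $ \tilde{\eta}_{1} > 0 $ remains positive for small $ \beta $, with a positive smooth eigenfunction $ \tilde{\varphi} $. The key gain is that with respect to the \emph{original} Robin operator, this $ \tilde{\varphi} $ has a strictly positive flux $ \frac{\partial \tilde{\varphi}}{\partial \nu} + \frac{2}{p-2} h \tilde{\varphi} = \frac{2\beta}{p - 2}\tilde{\varphi} > 0 $, which will later dominate the $ cH_{7} $ term on $ \partial M $.

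The super-solution would be constructed in two pieces. Pick an interior point $ p_{0} \in M $ with $ S_{7}(p_{0}) > 0 $, and a small domain $ \Omega $ with $ \bar{\Omega} \subset M $ containing $ p_{0} $. Depending on whether $ (\bar{M}, g) $ is locally conformally flat near $ p_{0} $, I would apply Proposition \ref{HL:prop1} (after a preliminary pointwise conformal change making $ R_{g} < 0 $ on $ \bar{\Omega} $ via a standard cutoff argument) or else Proposition \ref{HL:prop2}, to get a positive smooth local solution $ v \in \calC^{\infty}(\Omega) \cap \calC^{0}(\bar{\Omega}) $ of
\begin{equation*}
-a\Delta_{g} v + R_{g} v = S_{7} v^{p-1} \; {\rm in} \; \Omega, \quad v = 0 \; {\rm on} \; \partial \Omega,
\end{equation*}
with $ \sup_{\Omega} v $ as small as needed by shrinking $ \Omega $. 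Extend $ v $ by zero to $ \bar{M} $ and set $ u_{+} = v + \epsilon \tilde{\varphi} $ for a small $ \epsilon > 0 $, after a smooth mollification near $ \partial \Omega $ to place $ u_{+} $ in $ W^{2, q}(M, g) $. In the interior we need $ S_{7} v^{p-1} + \epsilon \tilde{\eta}_{1} \tilde{\varphi} \geqslant S_{7}(v + \epsilon \tilde{\varphi})^{p-1} $, which holds for $ \epsilon $ and $ \sup v $ small by separating the regime where $ v $ dominates (a linearization in $ \epsilon \tilde{\varphi} $) from the regime where $ \epsilon \tilde{\varphi} $ dominates (a bound using $ \tilde{\eta}_{1} > 0 $). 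On $ \partial M $, since $ v \equiv 0 $ near $ \partial M $, the Robin expression reduces to $ \frac{2\epsilon\beta}{p-2}\tilde{\varphi} \geqslant \frac{2c}{p-2}H_{7}(\epsilon\tilde{\varphi})^{p/2} $, which is achieved by choosing $ c $ small depending on $ \epsilon, \beta, \tilde{\varphi} $ and $ \sup|H_{7}| $.

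For the sub-solution, I would use $ u_{-} = v $ itself (extended by zero). On $ \Omega $ the PDE holds with equality; outside $ \Omega $ the extension is zero and the inequality is trivial; across $ \partial \Omega $ the inward jump of $ \partial v/\partial \nu $ is $ \leqslant 0 $, so $ u_{-} $ is a weak sub-solution in the interior in the sense of Theorem \ref{HL:thm2}. On $ \partial M $, $ u_{-} \equiv 0 $, so the boundary sub-solution inequality is trivially satisfied for any $ \theta_{1} \leqslant 0 $. Clearly $ 0 \leqslant u_{-} \leqslant u_{+} $ with $ u_{-} \not\equiv 0 $. Applying Theorem \ref{HL:thm2} with $ S = S_{7}, H = cH_{7} $ (shrinking $ c $ once more to meet the smallness requirement on $ \sup_{\bar{M}}|H| $) yields the desired positive $ \calC^{\infty}(M) \cap \calC^{1, \alpha}(\bar{M}) $ solution of (\ref{positive:eqn1}); standard elliptic regularity then upgrades it to smoothness up to the boundary. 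The main obstacle is the delicate bookkeeping in the matching region near $ \partial \Omega $: one must verify that, after smoothing, the interior super-solution inequality strictly holds both where $ v $ is bounded away from zero and where $ v $ degenerates into $ \epsilon \tilde{\varphi} $; this forces the joint smallness $ \epsilon \ll 1 $ and $ \sup_{\Omega} v \ll 1 $ as well as the final smallness of $ c $.
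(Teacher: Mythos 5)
Your framework matches the paper's at every point except the construction of the super-solution, and that is where the proposal breaks down. The boundary side — the Robin-perturbed eigenfunction giving strictly positive flux $\frac{\partial\tilde\varphi}{\partial\nu}+\frac{2}{p-2}h\tilde\varphi>0$, then choosing $c$ small — is exactly what the paper does (their $\beta<0$ is your $-\beta$). The sub-solution $u_{-}$ = (local Dirichlet solution, extended by zero) is also the paper's. But the paper does not set $u_{+}=v+\epsilon\tilde\varphi$; it invokes the gluing construction of Lemma~3.2 of \cite{XU6} to produce an $f\in\calC^{\infty}(\bar\Omega)$ that is a super-solution of the Yamabe equation in $\Omega$, satisfies $f\geqslant u_0$ there, and coincides with $\phi=\delta\varphi$ near $\partial\Omega$, so that the spliced $u_{+}=f$ in $\Omega$, $\phi$ outside, is smooth and globally a super-solution. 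This is a different device than yours, and the difference matters for two reasons.

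First, you assert that $\sup_{\Omega}v$ can be driven to zero by shrinking $\Omega$. Neither Proposition~\ref{HL:prop1} nor Proposition~\ref{HL:prop2} gives this; they deliver existence of a positive solution on a small domain, not a sup-norm bound that decays with the domain. For the critical-exponent Dirichlet problem the opposite tendency is expected: by the scaling $u_r(x)=r^{-(n-2)/2}u_1(x/r)$ that preserves $-a\Delta u=S u^{p-1}$ on a shrinking ball, $\sup u_r$ grows as $r\to 0$. Your interior verification of $S_{7}v^{p-1}+\epsilon\tilde\eta_{1}\tilde\varphi\geqslant S_{7}(v+\epsilon\tilde\varphi)^{p-1}$ reduces, via the mean value theorem, to controlling $S_{7}(p-1)\xi^{p-2}$ with $\xi\leqslant v+\epsilon\tilde\varphi$; this is hopeless without the uncontrolled smallness of $v$. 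The gluing lemma sidesteps this entirely: it builds $f$ as a super-solution that dominates $u_0$ \emph{however large} $u_0$ is, so no smallness of the local solution is needed.

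Second, the extension of $v$ by zero is only Lipschitz across $\partial\Omega$: by the Hopf lemma the outward normal derivative of $v$ is strictly negative from inside, so $-a\Delta_{g}v_{\mathrm{ext}}$ carries a \emph{negative} Dirac mass concentrated on $\partial\Omega$. That distributional sign is harmless (indeed favorable) for the sub-solution inequality, which is why $u_{-}$ works, but it is the wrong sign for a super-solution: $v_{\mathrm{ext}}+\epsilon\tilde\varphi$ is not a weak super-solution across $\partial\Omega$. Mollifying does not repair this, because smoothing only spreads the negative mass out; it cannot remove it, so the interior super-solution inequality fails in a collar of $\partial\Omega$. The role of the gluing lemma in the paper is precisely to replace this defective $v_{\mathrm{ext}}$ by a smooth $f$ that honestly satisfies the super-solution inequality up to $\partial\Omega$ and matches $\phi$ there, making the splice $u_{+}$ admissible in Theorem~\ref{HL:thm2}. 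Your proposal acknowledges a "matching region" difficulty but underestimates it: this is not bookkeeping, it is the essential obstruction that forces the gluing step.
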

\begin{proof} Without loss of generality, we may assume that $ S_{g} > 0 $ and $ h_{g} = h > 0 $ with positive constant $ h $, by Theorem \ref{HL:thm1}. According to Proposition \ref{HL:prop3}, we fix some $ \beta < 0 $ small enough so that $ \eta_{1, \beta} > 0 $ and satisfies
\begin{equation}\label{positive:eqn3}
-a\Delta_{g} \varphi + R_{g} \varphi = \eta_{1, \beta} \varphi \; {\rm in} \; M, \frac{\partial \varphi}{\partial \nu} + \frac{2}{p-2} h_{g} \varphi = 0 \; {\rm on} \; \partial M.
\end{equation} 
Here $ \varphi > 0 $ on $ \bar{M} $. Any scaling of $ \varphi $ solves (\ref{positive:eqn3}). Denote $ \phi = \delta \varphi $ for some $ \delta > 0 $. We choose $ \delta > 0 $ small enough so that
\begin{equation*}
\eta_{1, \beta} \inf_{\bar{M}} \varphi \geqslant \delta^{p-2} \sup_{\bar{M}} S_{7} \sup_{\bar{M}} \varphi^{p-1}.
\end{equation*}
It follows that
\begin{equation*}
-a\Delta_{g} \phi + R_{g} \phi \geqslant S_{7} \phi^{p-1} \; {\rm in} \; M.
\end{equation*}
Fix this $ \delta $. We then choose $ c > 0 $ small enough so that
\begin{equation*}
\beta \phi \geqslant \left( cH_{7} \right) \phi^{\frac{p}{2}} \; {\rm on} \; \partial M.
\end{equation*}
It follows that
\begin{equation}\label{positive:eqn4}
\frac{\partial \phi}{\partial \nu} + \frac{2}{p-2} h_{g} \phi \geqslant \frac{2}{p-2} \left( cH_{7} \right) \phi^{\frac{p}{2}} \; {\rm on} \; \partial M.
\end{equation}
Note that (\ref{positive:eqn4}) still holds for any smaller $ c $. For the sub-solution, we apply Proposition \ref{HL:prop1} or Proposition \ref{HL:prop2}, depending on the vanishing of the Weyl tensor in the interior $ M $, to construct local solution $ u_{0} $ of the Yamabe equation with Dirichlet boundary condition on some domain $ \Omega $. Apply Lemma 3.2 in \cite{XU6}, we can construct a local super-solution $ f $ of the Yamabe equation in $ \Omega $ such that $ f = \phi $ near $ \partial \Omega $. We then define
\begin{equation*}
u_{-} = \begin{cases}  & u_{0} \; {\rm in} \; \Omega \\ & 0 \; {\rm in} \; M \backslash \Omega \end{cases}
\end{equation*}
\begin{equation*}
u_{+} : = \begin{cases} & f \; {\rm in} \; \Omega \\ & \phi \; {\rm in} \; M \backslash \Omega \end{cases}.
\end{equation*}
Since $ u_{-} \equiv 0 $ on $ \partial M $, it follows from the same argument in Lemma 3.1 in \cite{XU6} that $ u_{-} $ is a sub-solution of the (\ref{positive:eqn1}) with $ S = S_{7} $ and $ H = cH_{7} $ for any constant $ c $. According to the construction in Lemma 3.2 of \cite{XU6}, we conclude that $ 0 \leqslant u_{-} \leqslant u_{+} $, $ u_{-} \not\equiv 0 $. In addition, $ u_{-} \in H^{1}(M, g) \cap \calC^{0}(\bar{M}) $, and $ u_{+} \in \calC^{\infty}(\bar{M}) $.
According to (\ref{positive:eqn4}), we have seen that $ u_{+} $ is a super-solution of the (\ref{positive:eqn1}) with $ S = S_{7} $ and $ H = cH_{7} $ for small enough $ c $. Shrinking $ c $, if necessary, so that the hypotheses of smallness of $ c\sup_{\bar{M}} \lvert H_{7} \rvert $ holds. A direct application of Theorem \ref{HL:thm2} indicates the existence of a positive solution $ u \in \calC^{\infty}(\bar{M}) $ with $ S = S_{7} $ and $ H = cH_{7} $.
\end{proof}
\medskip

The proof of the Case (ii) is very similar as in Theorem \ref{positive:thm1}. 
\begin{theorem}\label{positive:thm2}
Let $ (\bar{M}, g) $ be a compact manifold with non-empty smooth boundary $ \partial M $, $ n = \dim \bar{M} \geqslant 3 $. Let $ S_{8} > 0 $ somewhere be any smooth function on $ \bar{M} $. Let $ H_{8} \in \calC^{\infty}(\bar{M}) $ such that $ H_{8} \leqslant 0 $ everywhere on $ \partial M $. If $ \eta_{1} > 0 $, then there exists a small enough constant $ c > 0 $ such that (\ref{positive:eqn1}) admits a positive solution $ u \in \calC^{\infty}(\bar{M}) $ with $ S = S_{8} $ and $ H = cH_{8} $. Equivalently, there exists a Yamabe metric $ \tilde{g} = u^{p-2} g $ such that $ R_{\tilde{g}} = S_{8} $ and $ h_{\tilde{g}} = cH_{8} \bigg|_{\partial M} $.
\end{theorem}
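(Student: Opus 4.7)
The plan is to mirror the proof of Theorem \ref{positive:thm1} essentially verbatim, but observe that the hypothesis $H_{8} \leqslant 0$ on $\partial M$ makes the boundary super-solution check trivial rather than a genuine constraint on $c$.

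By the Han-Li conjecture \cite[Theorem]{XU5}, I may assume without loss of generality that $R_{g} > 0$ and $h_{g} = h > 0$ is a positive constant, keeping $\eta_{1} > 0$. By the same continuity argument used in Proposition \ref{HL:prop3} (applied now to negative perturbations of the boundary weight), I fix a constant $\beta < 0$ with $\lvert \beta \rvert$ sufficiently small so that $\eta_{1, \beta} > 0$ with associated positive eigenfunction $\varphi \in \calC^{\infty}(\bar{M})$ satisfying
\begin{equation*}
-a\Delta_{g}\varphi + R_{g}\varphi = \eta_{1, \beta}\varphi \; {\rm in} \; M, \; \frac{\partial \varphi}{\partial \nu} + \frac{2}{p-2}(h_{g} + \beta)\varphi = 0 \; {\rm on} \; \partial M.
\end{equation*}
Set $\phi = \delta\varphi$ and choose $\delta > 0$ small enough that $\eta_{1, \beta}\inf_{\bar{M}}\varphi \geqslant \delta^{p-2}\sup_{\bar{M}}S_{8} \cdot \sup_{\bar{M}}\varphi^{p-1}$; this is possible because either $\sup S_{8} \leqslant 0$ (in which case the inequality is automatic) or $\delta^{p-2} \to 0$. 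Consequently $\phi$ satisfies the interior super-solution inequality $-a\Delta_{g}\phi + R_{g}\phi = \eta_{1, \beta}\phi \geqslant S_{8}\phi^{p-1}$ in $M$.

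The boundary check becomes transparent: using the eigenvalue equation for $\varphi$,
\begin{equation*}
\frac{\partial \phi}{\partial \nu} + \frac{2}{p-2}h_{g}\phi = -\frac{2}{p-2}\beta\phi > 0 \geqslant \frac{2}{p-2}(cH_{8})\phi^{\frac{p}{2}} \; {\rm on} \; \partial M,
\end{equation*}
where the last inequality holds for every $c > 0$ because $H_{8} \leqslant 0$ and $\phi > 0$. In particular, the smallness of $c$ is not required to make $\phi$ a boundary super-solution; this is the essential simplification over the proof of Theorem \ref{positive:thm1}. For the sub-solution, I proceed exactly as in Theorem \ref{positive:thm1}: since $S_{8} > 0$ somewhere in $M$, I select an interior Riemannian domain $\Omega \subset M$ on which $S_{8} > 0$ and apply Proposition \ref{HL:prop1} (when $(\bar{M}, g)$ is not locally conformally flat) or Proposition \ref{HL:prop2} together with Remark \ref{HL:re1} (when it is locally conformally flat, choosing $\Omega = \Omega_{\epsilon}$ with nontrivial topology) to obtain a positive local Yamabe solution $u_{0}$ of
\begin{equation*}
-a\Delta_{g}u_{0} + R_{g}u_{0} = S_{8}u_{0}^{p-1} \; {\rm in} \; \Omega, \; u_{0} = 0 \; {\rm on} \; \partial \Omega.
\end{equation*}
Using \cite[Lem.~3.2]{XU6}, I construct a smooth local super-solution $f$ in $\Omega$ with $f = \phi$ near $\partial \Omega$ and set
\begin{equation*}
u_{-} = \begin{cases} u_{0} & {\rm in} \; \Omega \\ 0 & {\rm in} \; M \setminus \Omega \end{cases}, \qquad u_{+} = \begin{cases} f & {\rm in} \; \Omega \\ \phi & {\rm in} \; M \setminus \Omega \end{cases}.
\end{equation*}
Since $u_{-} \equiv 0$ on $\partial M$ the sub-solution boundary condition is satisfied trivially (with any $\theta_{1} \leqslant 0$), $u_{-} \in H^{1}(M, g) \cap \calC^{0}(\bar{M})$ is nontrivial, $u_{+} \in \calC^{\infty}(\bar{M})$, and $0 \leqslant u_{-} \leqslant u_{+}$ pointwise.

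Finally, I shrink $c > 0$ only to satisfy the sup-norm smallness hypothesis on $cH_{8}$ in Theorem \ref{HL:thm2}; all super- and sub-solution inequalities continue to hold under shrinking of $c$. A direct application of Theorem \ref{HL:thm2} then yields a positive smooth solution $u \in \calC^{\infty}(\bar{M})$ of (\ref{positive:eqn1}) with $S = S_{8}$ and $H = cH_{8}$, equivalently the conformal metric $\tilde{g} = u^{p-2}g$ has $R_{\tilde{g}} = S_{8}$ and $h_{\tilde{g}} = cH_{8}$ on $\partial M$. The only substantive work beyond invoking Theorem \ref{positive:thm1}'s machinery is the sign check in the boundary inequality, which is straightforward; there is no new analytic obstacle.
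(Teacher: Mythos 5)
Your proof is correct and takes the same route the paper does: the paper's own proof of this theorem is a one-line remark that everything is as in Theorem \ref{positive:thm1}, except that the super-solution boundary inequality (\ref{positive:eqn4}) holds automatically for $H_{8}\leqslant 0$, so $c$ is shrunk only to meet the sup-norm hypothesis of Theorem \ref{HL:thm2}. You have simply spelled out that same argument in full (and in fact written the boundary sign check a bit more cleanly than the paper does).
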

\begin{proof}
Everything is exactly the same as in Theorem \ref{positive:thm1}, except at (\ref{positive:eqn4}), there is no restriction for the choice of the constant $ c $. However, $ c $ should be small enough so that the hypotheses in Theorem \ref{HL:thm2} holds.
\end{proof}
\begin{remark}\label{positive:re1}
The result of Theorem \ref{positive:thm1} and Theorem \ref{positive:thm2} indicate that on $ (\bar{M}, g) $ with $ n = \dim \bar{M} \geqslant 3 $, any function that is positive somewhere can be realized as a scalar curvature function of some metric $ g $, meanwhile the mean curvature function of $ g $ can be some small enough scaling of any smooth function, provided that the manifold admits a metric with positive first eigenvalue of the conformal Laplacian, or equivalently, positive Yamabe invariant \cite[\S1]{ESC}.
\end{remark}

\bibliographystyle{plain}
\bibliography{YamabessSM}

\begin{thebibliography}{10}

\bibitem{Aubin}
T.~Aubin.
\newblock {\em Nonlinear Analysis on Manifolds. {M}onge-{A}mp\'ere
  {E}quations.}
\newblock Grundlehren der mathematischen Wissenschaften. Springer, Berlin,
  Heidelberg, New York, 1982.

\bibitem{BM}
S.~Brendle and F.~Marques.
\newblock Recent progress on the {Y}amabe problem.
\newblock {\em arXiv:1040.4960}.

\bibitem{BreM}
H.~Brezis and F.~Merle.
\newblock Uniform esitmates and blow-up behavior for solutions of $ -\delta u =
  v(x) e^{u} $ in two dimensions.
\newblock {\em Commun. Partial. Differ.}, 16(8-9):1223--1253, 1991.

\bibitem{CY}
A.~Chang and P.~Yang.
\newblock Prescribing {G}aussian curvature on $ \mathbb{S}^{2} $.
\newblock {\em Acta Math.}, 159:215--259, 1987.

\bibitem{CMR}
S.~Cruz-Bl\'azquez, A.~Malchiodi, and D.~Ruiz.
\newblock Conformal metrics with prscribed scalar and mean curvature.
\newblock {\em ar{X}iv:2105.04185}.

\bibitem{ESC}
J.~Escobar.
\newblock The {Y}amabe problem on manifolds with boundary.
\newblock {\em J. Differential Geom.}, 35:21--84, 1992.

\bibitem{ESC3}
J.~Escobar.
\newblock Conformal metrices with prescribed mean curvature on the boundary.
\newblock {\em Calc. Var. Partial Differential Equations}, 4:559--592, 1996.

\bibitem{KW2}
J.~Kazdan and F.~Warner.
\newblock Curvature functions for compact 2$-$manifolds.
\newblock {\em Ann. of Math.}, 99:14--47, 1974.

\bibitem{KW3}
J.~Kazdan and F.~Warner.
\newblock Existence and conformal deformations of metrices with prescribed
  {G}aussian and scalar curvatures.
\newblock {\em Ann. of Math. (2)}, 101(2):317--331, 1975.

\bibitem{KW}
J.~Kazdan and F.~Warner.
\newblock Scalar curvature and conformal deformation of {R}iemannian structure.
\newblock {\em J. Differential Geom.}, 10:113--134, 1975.

\bibitem{MaMa}
A.~Malchiodi and M.~Mayer.
\newblock Prescribing {M}orse scalar curvatures: Pinching and {M}orse theory.
\newblock {\em Commun. Pure Appl. Math}, 2021.

\bibitem{XU2}
S.~Rosenberg and J.~Xu.
\newblock Solving the {Y}amabe problem by an iterative method on a small
  {R}iemannian domain.
\newblock {\em ar{X}iv:2110.14543}.

\bibitem{STRUWE}
M.~Struwe.
\newblock A flow approach to {N}irenberg's problem.
\newblock {\em Duke Math. J.}, 128(19-64), 2005.

\bibitem{T}
M.~Taylor.
\newblock {\em Partial Differential Equations {I}}.
\newblock Springer-Verlag, New York, New York, 2011.

\bibitem{T3}
M.~Taylor.
\newblock {\em Partial Differential Equations {III}}.
\newblock Springer-Verlag, New York, New York, 2011.

\bibitem{XU4}
J.~Xu.
\newblock The boundary {Y}amabe problem, {I}: Minimal boundaray case.
\newblock {\em ar{X}iv:2111:03219}.

\bibitem{XU5}
J.~Xu.
\newblock The boundary {Y}amabe problem, {II}: General constant mean curvature
  case.
\newblock {\em ar{X}iv:2112.05674}.

\bibitem{XU8}
J.~Xu.
\newblock The conformal {L}aplacian and the {K}azdan-{W}arner problem: Zero
  first eigenvalue case.
\newblock {\em ar{X}iv:2211.15024}.

\bibitem{XU6}
J.~Xu.
\newblock Prescribed scalar curvature on compact manifolds under conformal
  deformation.
\newblock {\em ar{X}iv:2205.15453}.

\bibitem{XU7}
J.~Xu.
\newblock Prescribed scalar curvature problem under conformal deformation of a
  {R}iemannian metric with {D}irichlet boundary condition.
\newblock {\em ar{X}iv:2208.11318}.

\bibitem{XU3}
J.~Xu.
\newblock Solving the {Y}amabe-type equations on closed manifolds by iteration
  schemes.
\newblock {\em ar{X}iv: 2110.15436}.

\end{thebibliography}

\end{document}